\def\cS{{\mathcal S}}
\def\cF{{\mathcal F}}
\def\dH#1{\dot{H}^{#1}}
\def\cT{{\mathcal T}}
\def\cQ{{\mathcal Q}}
\def\Forall{\qquad \hbox{for all }}
\def\beq#1\eeq{\begin{equation} #1 \end{equation}}
\def\bal#1\eal{\begin{aligned} #1 \end{aligned}}
\def\RR{{\mathbb R}}
\def\CC{{\mathbb C}}
\def\tH#1{\widetilde H^{#1}}
\chardef\atsign='100
\def\tH{\widetilde H}
\def\tHs{{\tH^s}}
\def\tHr{{\tH^r}}
\def\RRD{{\RR^d}}
\def\OMt{{\Omega^M(t)}}
\def\pow{{s}}
\def\Nplus{{N^+}}
\def\Nminus{{N^-}}
\def\vh{\mathbb V_h}
\def\omt{{\Omega^M(t)}}
\def\ltomt{{L^2(\omt)}}
\def\tzer{S}
\def\dd{\mathsf{d}}
\def\Thd{{\mathcal{T}_h(D)}}
\def\Tht{\widetilde{\mathcal{T}}_h(D)}
\def\ttau{{\widetilde \tau}}
\def\Thdd{\widetilde{\mathcal{T}}_h(D^\delta_h)}
\renewcommand{\Re}{\mathfrak{Re}}
\renewcommand{\Im}{\mathfrak{Im}}
\theoremstyle{plain}
\newtheorem{theorem}{Theorem}[section]
\newtheorem{lemma}[theorem]{Lemma}
\theoremstyle{theorem}
\newtheorem{remark}{Remark}[section]
\begin{document}

\title[APPROXIMATION OF THE FRACTIONAL LAPLACIAN]
{Numerical Approximation of the Integral Fractional Laplacian}
\author{Andrea Bonito}
\address{Department of Mathematics, Texas A\&M University, College Station,
TX~77843-3368.}
\email{bonito\atsign math.tamu.edu}

\author{Wenyu Lei}
\address{Mathematics Area, SISSA - Scuola Internazionale Superiore di Studi Avanzati, via Bonomea, 265, 34136 Trieste Italy}
\email{wenyu.lei\atsign sissa.it}

\author{Joseph E. Pasciak}
\address{Department of Mathematics, Texas A\&M University, College Station,
TX~77843-3368.}
\email{pasciak\atsign math.tamu.edu}

\date{\today}

\begin{abstract} 
We propose a new nonconforming finite element algorithm to approximate the solution to
the elliptic problem involving the fractional Laplacian.
We first derive an integral representation of the bilinear form corresponding to the variational problem.
The numerical approximation of the action of the corresponding stiffness
matrix consists of  three steps: (i) apply a sinc quadrature scheme to
approximate the integral representation by a finite sum where each term
involves the solution of an elliptic partial differential equation
defined on the entire space, (ii) truncate each elliptic problem to a
bounded domain,  (iii) use the finite element method for the space
approximation 
on each  truncated domain. The consistency error analysis for the three 
steps is discussed together with the numerical implementation of the
entire algorithm. The results of computations are given 
illustrating the error behavior in terms of the mesh size of the physical domain, 
the domain truncation parameter and the quadrature spacing parameter.
\end{abstract} 


\maketitle

\section{Introduction}

We consider a nonlocal model  on a bounded domain involving the Riesz
fractional derivative (i.e., the fractional Laplacian).
For theory and numerical analysis of  
general nonlocal models, we refer to the review paper \cite{DGLZ12} and references therein. 
Particularly, several applications are modeled by partial differential equations involving the fractional 
Laplacian: obstacle problems from symmetric $\alpha$-stable L\'evy processes \cite{Tankov03,Levendorskii04,Pham97};
image denoisings \cite{GH15}; fractional kinetics and anomalous transport \cite{Zaslavsky02};
fractal conservation laws \cite{Droniou10,BKW01}; and geophysical fluid dynamics \cite{CC03,Constantin02,CCW01,HPGS95}.

In this paper, we consider a class of fractional boundary problems on
bounded domains where
the fractional derivative comes from the fractional Laplacian defined on
all  of $\mathbb R^d$.  The motivation for these problems is illustrated by 
an evolution equation considered by Meuller \cite{mueller} of the form:
\begin{align}
  u_t &= -\widetilde\Lambda_su  +f(u),\quad\text{in } \RR^+\times
  D,\label{eq1}\\ 
u&=0,\quad \hbox{ in } D^c.  \label{eq2}
\end{align}
Here  $D$ is a convex polygonal  domain in
$\RR^d$, $D^c$ denotes its complement  and 
$$\widetilde \Lambda_s u := \left( (-\Delta)^s\widetilde u\right)|_D$$ 
with $\widetilde u $ denoting the extension of $u$ by 
zero to $\RR^d$.  This fractional Laplacian on
$\RR^d$ is defined using the Fourier transform $\cF$:
\begin{equation}
\cF( (-\Delta)^s f)(\zeta) = |\zeta|^{2s} \cF(f)(\zeta).
\label{Deldef}\end{equation}
The formula \eqref{Deldef} defines an unbounded operator $(-\Delta)^s$
on $L^2(\RRD)$ with domain of definition 
$$D((-\Delta)^s):=\{f\in L^2(\RRD)\ : \
|\zeta|^{2s} \cF ( f) \in L^2(\RRD)\}.$$
It is clear that the Sobolev space
$$
H^{2s}(\RRD) := \left\lbrace f \in L^2(\RRD) \ : \ (1+|\zeta|^2)^{s} \cF(f) \in L^2(\RRD)\right\rbrace
$$
is a subset of $D((-\Delta)^s)$ for any $s\ge 0$.  Note that  $(-\Delta)^sv$ for
$s=1$ and $v\in H^2(\RRD)$ coincides with the negative Laplacian applied
to $v$.

The term $-\widetilde\Lambda_s$ along with the ``boundary condition"
\eqref{eq2} represents the generator of a symmetric $s$-stable 
L\'evy process which is killed when it exits $D$ (cf. \cite{mueller}). The
$f(u)$ term in \eqref{eq1} involves  white noise and will be
ignored in this paper.

The goal of this paper is to study the numerical approximation of solutions of
partial differential equations on bounded domains involving the
fractional operator $\widetilde \Lambda_s$ supplemented with the boundary conditions \eqref{eq2}. As finite element approximations to parabolic
problems are based on approximations to the elliptic part, we shall
restrict our attention to the elliptic case, namely, 
\begin{equation} \begin{aligned}
\widetilde \Lambda_s u &=f, \qquad \text{in } D,\\
u&=0, \qquad \text{in } D^c.\\
\end{aligned}
\label{mainP}
\end{equation}
The above system is sometimes referred to as the \emph{``integral"} fractional Laplacian problem.

We note that the variational formulation of \eqref{mainP} can be defined
in terms of the classical spaces
$\widetilde H^s(D)$ consisting of the functions defined in $D$ whose
extension by zero are in $H^s(\RRD)$.  This is to find $u\in
\widetilde H^s(D)$ satisfying 
\begin{equation}
a(u,\phi) = \int_D f \phi\, dx, \Forall \phi \in \widetilde H^s(D),
\label{mainWP}
\end{equation}
where
\begin{equation}\label{e:variational1}
a(u,\phi) = \int_{\RRD} [(-\Delta)^{s/2} \widetilde u ][(-\Delta)^{s/2}\widetilde\phi]\, dx
\end{equation}
with $\widetilde u$ and $\widetilde \phi$ denoting the extensions by 0.
We refer to Section~\ref{s:analytical} for the description of model problems.
The bilinear form $a(\cdot,\cdot)$ is obviously bounded on $\tHs(D)\times \tHs(D)$ and, as discussed in Section~\ref{s:notation}, it
is coercive on $\tHs(D)$.
Thus,  the Lax-Milgram Theory guarantees existence
and uniqueness.

We consider finite element approximations of \eqref{mainWP}.  The use of
standard finite element approximation spaces of continuous functions
vanishing on $\partial D$ is the natural choice.  The convergence
analysis is classical once the regularity properties of solutions to
problem \eqref{mainWP} are understood (regularity results for
\eqref{mainWP} have been studied  in \cite{AB15} and
\cite{ros-oton}). However, the implementation of the resulting
discretization suffers from the fact that, for $d>1$, the entries of the
stiffness matrix, namely, $a(\phi_i,\phi_j)$, with $\{\phi_k\}$ denoting
the finite element basis, cannot be computed exactly.

When $d=1$, $s\in (0,1/2)\cup (1/2,1)$   and, for example, $D=(-1,1)$, 
the bilinear form can be written
in terms of Riemann-Liouville fractional derivatives (cf. \cite{KST06}), namely,
\begin{equation} a(\phi_i,\phi_j) = \frac{(\partial_L^{s}\phi_i,\partial_R^{s}\phi_j)_D
+(\partial_L^{s}\phi_j,\partial_R^{s}\phi_i)_D}{ 2 \cos(s\pi)} . \label{onedstiff}
\end{equation}
Here $(\cdot,\cdot)_D$ denotes the inner product on $L^2(D)$ and for $t\in (0,1)$ and $v\in H^1_0(D)$,
the left-sided and right-sided Riemann-Liouville fractional derivatives of order $t$ are defined by
\begin{equation}
\partial^{t}_Lv(x) = \frac 1 {\Gamma(1-t)}\frac{d}{dt} \int_{-1}^x \frac{v(y)}{(x-y)^{t} }\, dy
\label{onedL}
\end{equation}
and
\begin{equation}
\partial^{t}_Rv(x) = \frac 1 {\Gamma(1-t)}\frac{d}{dt} \int_x^1 \frac{v(y)}{(x-y)^{t} }\, dy .
\label{onedR}
\end{equation}
Note that the integrals in \eqref{onedL} and \eqref{onedR} can be  easily computed when $v$ is
a piecewise polynomial, i.e, when $v$ is a finite element basis
function. The computation of the stiffness matrix in this case reduces
to a coding exercise.

A representation of the fractional Laplacian for $d\ge 1$ is given by
\cite{ST10}:
\begin{equation}\label{e:PV}
((-\Delta)^\pow \eta)(x) = c_{d,\pow} PV \int_\RRD \frac {\eta(x)-\eta(y)}
{|x-y|^{d+2s}}\,
dy, \Forall \eta\in \mathcal S ,
\end{equation}
where $\mathcal S$ denotes the Schwartz space of rapidly decreasing
functions on $\RRD$,
$PV$ denotes the principle value and $c_{d,\pow}$ is a normalization
constant. It follows that for $\eta,\theta\in \mathcal S$,
\begin{equation} 
a(\eta,\theta)= ((-\Delta)^\pow \eta,\theta)= 
\frac {c_{d,\pow}}{2} \int_{\RRD}\int_{\RRD}
	\frac{(\eta(x)-\eta(y))(\theta(x)-\theta(y))}{|x-y|^{d+2\pow}}
	\, dy\, dx.
\label{schwpair}
\end{equation}
A density argument implies that the stiffness entries are  given by
\begin{equation}\label{stiff_int}
	a(\phi_i,\phi_j)=
\frac {c_{d,\pow}}{2} \int_{\RRD}\int_{\RRD}
	\frac{(\widetilde \phi_i(x)-\widetilde\phi_i(y))(\widetilde\phi_j(x)-\widetilde\phi_j(y))}{|x-y|^{d+2\pow}}
	\, dy\, dx,
	\end{equation}
	where again $\widetilde \phi$ denotes the extension of $\phi$ by zero outside $D$.
It is possible to apply the techniques developed for the approximation
of boundary integral stiffness matrices \cite{BEM} to deal with some of the issues
associated with the approximation of the double integral above,  
 namely, the
application of special techniques for handling the singularity and
quadratures. However, \eqref{stiff_int} requires additional
truncation techniques as the non-locality of the kernel implies a
non-vanishing integrand over $\RRD$.    These techniques are used to 
approximate \eqref{stiff_int} in \cite{DG13,AB15}.  In particular, 
\cite{AB15} use their regularity theory to do {\it a priori} mesh
refinement near the boundary to develop higher order convergence
under the assumption of exact evaluation of the stiffness matrix.

The method to be developed in this paper is based on a 
representation of the underlying bilinear form given in
Section~\ref{s:alternative}, namely,  for $s\in (0,1)$, $0\le r \le s$, 
$\eta\in H^r(\RRD)$ and $\theta\in H^{s-r}(\RRD)$,
\begin{equation}
\int_\RRD [(-\Delta)^{r/2}\eta][(-\Delta)^{(s-r)/2} \theta] \, dx 
 = c_s \int_0^\infty t^{2-2s} ((-\Delta)(I-t^2\Delta)^{-1}
\eta,\theta)\, \frac {dt}t \label{intid}
\end{equation}
where $(\cdot,\cdot)$ denotes the inner product on $L^2(\RRD)$ (see,
also \cite{bacuta-thesis}).  We note
that for $t>0$, $( I-t^2\Delta)^{-1}$ is a bounded map of $L^2(\RRD)$
into $H^2(\RRD)$ so that the integrand above is well defined for
$\eta,\theta\in L^2(\RRD)$.  In Theorem~\ref{t:alternate1}, we show
that for $\eta\in H^r(\RRD)$ and
$\theta\in H^{s-r}(\RRD)$, the formula \eqref{intid} holds and the right hand side integral converges
absolutely.
It follows that the bilinear form $a(\cdot,\cdot)$ is given 
by
\begin{equation}
a(\eta,\theta)
 = c_s \int_0^\infty t^{2-2s} ((-\Delta)(I-t^2\Delta)^{-1}
\widetilde \eta,\theta)_D\, \frac {dt} t,\Forall \eta,\theta\in \tHs(D) .
\label{intidD}
\end{equation}

There are three main issues needed to be addressed in developing numerical
methods for \eqref{mainWP} based on \eqref{intidD}:
\begin{enumerate}[(a)]
\item  The infinite integral with respect to $t$ must be truncated and
  approximated by numerical quadrature;
\item At each quadrature node $t_j$, the inner product term in the integrand involves an elliptic problem
  on $\RRD$.
  This must be replaced by a problem with vanishing boundary condition on a bounded truncated
  domain $\Omega^M(t_j)$ (defined below);
\item Using a fixed subdivision of $D$, we construct subdivisions of the larger domain $\Omega^M(t_j)$ which
  coincide with that on $D$.  We then replace the problems on
  $\Omega^M(t_j)$ of (b) above by their finite element approximations.
\end{enumerate}

We address (a) above by first making the change of variable $t^{-2}=e^y$
which results in an integral over $\RR$.   We then apply a sinc
quadrature obtaining  the approximate bilinear form
\begin{equation}
a^k(\eta,\theta):=\frac {c_s k} 2 \sum_{j=-N^-}^{N^+} e^{sy_j} 
((-\Delta) (e^{y_j}I-\Delta)^{-1} \widetilde \eta,\theta)_D,\Forall
\theta,\eta\in L^2(D),
\label{quad}
\end{equation}
where $k$ is the quadrature spacing, $y_j=kj$, and $N^-$ and $N^+$ are
positive integers.  Theorem~\ref{t:quad_error} shows that for 
$\theta\in\tHs(D)$ and $\eta\in \widetilde{H}^\delta(D)$ with
$\delta\in(s,2-s]$, we have
$$
\begin{aligned}
&|a(\eta,\theta)-a^k(\eta,\theta)| \\
&\quad\le C(\delta,s, \dd)\big[e^{-2\pi\dd/k}
+e^{\mathbin{(s-\delta)}N^+k/2}+e^{-s kN^-}\big]
\|\eta\|_{\widetilde{H}^\delta(D)}
\|\theta\|_{\widetilde{H}^s(D)},
\end{aligned}
$$
where $0<\dd<\pi$ is a fixed constant.
Balancing of the exponentials gives rise to an
$O(e^{-2\pi\dd/k})$ convergence rate with the relation $\Nplus+\Nminus=O(1/k^2)$.

The size of the truncated domain $\Omega^M(t_j)$ in (b) is determined by the decay of $(e^{y_j}I-\Delta)^{-1} f$ for functions $f$ supported in $D$.   For technical
reasons, we first extend $D$ to a bounded convex  (star-shaped with
respect to the origin)  domain
$\Omega$ and set (with $t_j=e^{-y_j/2}$)
$$
\Omega^M(t_j):= \left\lbrace
\begin{array}{ll}
\left\lbrace (1+t_j(1+M))x \ : \ x \in \Omega \right\rbrace, &\qquad t_j\geq 1\\
\left\lbrace (2+M)x \ : \ x \in \Omega \right\rbrace, &\qquad t_j < 1.
\end{array}\right. 
$$
Let $\Delta_t$ denote the 
unbounded operator  on $L^2(\Omega^M(t))$ corresponding to the Laplacian
on $\Omega^M(t)$ supplemented with vanishing boundary condition.   We define 
the bilinear form $a^{k,M}(\eta,\theta)$ by replacing 
$(-\Delta)(e^{y_j}I-\Delta)^{-1}$ in \eqref{quad} by $(-\Delta_{t_j}) (e^{y_j}I
-\Delta_{t_j})^{-1}$.  Theorem~\ref{t:truncate} guarantees that for
sufficiently large $M$, we have
$$|a^k(\eta,\theta)-a^{k,M}(\eta,\theta)|\le
Ce^{-cM}\|\eta\|_{L^2(D)}\|\theta\|_{L^2(D)},\quad\Forall \eta,\theta\in
L^2(D).$$
Here $c$ and $C$ are positive constants independent of $M$ and $k$.  This addresses (b). 

Step (c) consists in approximating  $(-\Delta_{t_j}) (e^{y_j}I 
-\Delta_{t_j})^{-1}$ using finite elements.
In this aim, we associate to a subdivision of $\Omega^M(t_j)$ the finite element space $\vh^M(t_j)$ and the restriction  $a^{k,M}_h(\cdot,\cdot)$ of $a^{k,M}(\cdot,\cdot)$ to  $\vh^M(t_j) \times \vh^M(t_j)$.
As already mentioned, the subdivisions of $\Omega^M(t_j)$ are constructed to coincide on $D$.
Denoting by $\vh(D)$ the set of finite element functions restricted to $D$ and vanishing on 
$\partial D$, our approximation to the solution of \eqref{mainWP} is the function $u_h\in \vh(D)$ satisfying 
\begin{equation}
a_h^{k,M}(u_h,\theta) = \int_D f \theta\, dx,\Forall \theta\in \vh(D).
\label{fem}
\end{equation}  
Lemma~\ref{l:ellipticity} guarantees the $\vh(D)$-coercivity of the bilinear form $a^{k,M}_h(\cdot,\cdot)$. Consequently, $u_h$ is well defined again from the Lax-Milgram theory. 
Moreover, given, for every $t_j$,  a sequence of quasi-uniform subdivisions of $\Omega^M(t_j)$, we show (Theorem~\ref{t:discrete_consistency}) that for $v$ in $\widetilde H^\beta(D)$ with $\beta\in (s,3/2)$ and for $\theta_h\in \vh(D)$,
$$	
	|a^{k,M}(v_h,\theta_h)-a^{k,M}_h(v_h,\theta_h)|\le C(1+\ln (h^{-1})) h^{\beta-s}
	\|v\|_{\widetilde H^ \beta(D)}\|\theta_h\|_{\widetilde H^s(D)}.
$$
Here $C$ is a constant independent of $M,k$ and $h$, and $v_h\in \vh(D)$ denotes the Scott-Zhang interpolation or the $L^2$ projection of $v$ depending on whether $\beta \in (1,3/2)$ or $\beta \in (s,1]$.

Strang's Lemma implies that the 
error between $u$ and $u_h$ in the $\tHs(D)$-norm is bounded by the error of the best approximation in $\tHs(D)$ and 
the sum of the consistency errors from the above three steps (see Theorem~\ref{t:total_error}).

The online of the paper is as follows. Section~\ref{s:notation} introduces notations of Sobolev spaces followed by Section~\ref{s:dotted_space} introducing the dotted spaces associated with elliptic operators. 
The alternative 
integral representation of the bilinear form is given in Section~\ref{s:alternative}. Based on this integral representation,
we discuss the discretization of the bilinear form and the associated consistency error in three steps (Sections~\ref{s:quad}, \ref{s:truncated} and \ref{s:FEM}). The energy error estimate for the discrete problem is given in Section~\ref{s:FEM}.
A discussion on the implementation aspects  of the method together with results of numerical experiments
illustrating the  convergence of the method are provided in Section~\ref{s:numerical}. 
We left to Appendix
the proof of technical result regarding the stability and approximability of the Scott-Zhang interpolant in nonstandard norms. 

\section{Notations and Preliminaries}\label{s:notation}

\subsubsection*{Notation}

We use the notation $D\subset \RRD$ to denote the polygonal domain with Lipschitz boundary in problem \eqref{mainWP} and
$\omega \subset \RRD$ to denote a generic  bounded Lipschitz domain.
For a function $\eta: \omega \rightarrow \RR$, we denote by $\tilde \eta$ its extension by zero outside $\omega$.
We do not specify the domain $\omega$ in the notation $\tilde \eta$ as it will be always clear from the context.

\subsubsection*{Scalar Products}

We denote by $(\cdot,\cdot)_{\omega}$ the $L^2(\omega)$-scalar product and 
by $\|\cdot\|_{L^2(\omega)}:= (\cdot,\cdot)_{\omega}^{1/2}$ the associated norm. 
The $L^2(\RRD)$-scalar product is denoted $(\cdot,\cdot)_\RRD$.
To simplify the notation, we write in short $(\cdot,\cdot):=(\cdot,\cdot)_\RRD$ and $\|\cdot\|:=\|\cdot\|_{L^2(\RRD)}$.  

\subsubsection*{Sobolev Spaces.}
For $r>0$, the
Sobolev space of order $r$ on $\RRD$, $H^r(\RRD)$, is defined to
be the set of functions $\theta\in L^2(\RRD)$ such that 
\begin{equation}
\|\theta\|_{H^r(\RRD)} := \bigg(\int_\RRD (1+|\zeta|^2)^{r/2}
|\mathcal F(\theta)(\zeta)|^2\, d\zeta\bigg)^{1/2} < \infty.
\label{halpharrd}
\end{equation}

In the case of bounded Lipschitz domains,  $H^r(\omega)$ with $r \in (0,1)$, stands for the Sobolev space of
 order $r$ on $\omega$. It is equipped with the Sobolev--Slobodeckij norm, i.e.
 \begin{equation} 
\|\theta\|_{H^r(\omega)}:= \big(\|\theta\|_{L^2(\omega)}^2 +
 |\theta|_{H^r(\omega)}^2\big)^{1/2} ,
\label{hbetanorm}
\end{equation}
where
$$ |\theta|_{H^r(\omega)}^2:=\int_\omega \int_\omega \frac {(\theta(x)-\theta(y))^2}
{|x-y|^{d+2r}} \, dx\,dy.$$ 
When $r\in (1,2)$ instead, the norm in 
$H^r(\omega)$ is given by
$$\|\theta\|_{H^r(\omega)}^2 := \|\theta\|_{H^1(\omega)}^2 +
\int_{\omega} \int_{\omega} \frac
{|\nabla \theta(x)-\nabla \theta(y)|^2} {|x-y|^{d+2(r-1)}} \, dx\, dy ,$$
where $\|w\|_{H^1(\omega)}:=(\|w\|_{L^2(\omega)}^2+\||\nabla w|\|_{L^2(\omega)}^2)^{1/2}$.
In addition, $H^1_0(\omega)$ denotes  the set of functions in $H^1(\omega)$ vanishing at $\partial \omega$, the boundary 
of $\omega$. 
Its dual space is denoted $H^{-1}(\omega)$.
We note that when we replace $\omega$ with $\RRD$ and $r\in[0,2)$, the norms using the double integral 
above are equivalent with those in \eqref{halpharrd} (see e.g. \cite{lions,mclean}).

\subsubsection*{The spaces $\tHr(D)$.}
For $r\in (0,2)$, the set of functions in $D$ whose extension by zero are in $H^s(\RRD)$ is denoted $\widetilde H^r(D)$.
The norm of $\tHr(D)$ is given by $\|\tilde{\cdot}\|_{H^r(\RRD)}$.
Note that for $r\in(0,1)$, \eqref{e:PV} implies that for $\phi$ in the Schwartz space $\cS$,
\begin{equation} 
((-\Delta)^r \phi,\phi)=
{|c_{d,r}|} \int_{\RRD}\int_\RRD \frac {(\phi(x)-\phi(y))^2}
{|x-y|^{d+2r}}\,
dx\, dy.
\label{tartar}
\end{equation}
Thus, we prefer to use
\begin{equation}\label{d:Hsnorm}
\| \phi \|_{\tH^r(D)}:= \bigg( {|c_{d,r}|} \int_{\RRD}\int_\RRD \frac
{(\widetilde \phi(x)-\widetilde \phi(y))^2}
{|x-y|^{d+2r}}\,
dx\, dy \bigg)^{1/2}
\end{equation}
as equivalent norm on $\tH^r(D)$ for $r\in(0,1)$.
This is justified upon invoking a variant of the
Peetre-Tartar compactness 
argument on $\tHr(D) \subset H^r(D)$.

\subsubsection*{Coercivity.}
Since $C_0^\infty(D) $ is dense in $\tHs(D)$ for $s\in (0,1)$ \cite{grisvard},  \eqref{schwpair} and a
density  argument imply that for 
$\eta,\theta\in \tHs(D)$, we have 
$$a(\eta,\theta) =\frac {c_{d,s}}{2} \int_{\RRD}\int_\RRD
	\frac{(\widetilde \eta(x)-\widetilde \eta(y))(\widetilde \theta(x)-\widetilde\theta(y))}{|x-y|^{d+2s}}
	\, dy\, dx.
$$
In turn, from the definition \eqref{d:Hsnorm} of the $\tH^s(D)$ norm, we directly deduce the coercivity of $a(\cdot,\cdot)$ on
$\tHs(D)$
\begin{equation}\label{e:coercivity_a}
a(\eta,\eta) = \| \eta \|^2_{\tH^s(D)}, \qquad \forall \eta \in \tH^s(D). 
\end{equation}

\subsubsection*{Dirichlet Forms}

We define the Dirichlet form on $H^1(\omega)\times H^1(\omega)$ to be
$$
d_{\omega}(\eta,\phi) := \int_{\omega} \nabla \eta \cdot \nabla \phi\, dx.
$$
On $H^1(\RRD)\times H^1(\RRD)$ we write
$$
d(\eta,\phi):=d_{\RRD}(\eta,\phi) := \int_{\RRD} \nabla \eta \cdot \nabla \phi\, dx.
$$

\section{Scales of interpolation spaces}\label{s:dotted_space}

We now introduce another set of functions instrumental in the analysis of the finite element method described in Section~\ref{s:truncated} and \ref{s:FEM}.
In this section $\omega$ stands for a bounded domain of $\RRD$.

Given
$f\in L^2(\omega)$, we define $\theta \in H^1_0(\omega)$ to be the unique
solution to 
\begin{equation}
(\theta,\phi)_{\omega} + d_{\omega}(\theta,\phi) =
(f,\phi)_{{\omega}},
\Forall \phi \in H^1_0({\omega})
\label{varscale}
\end{equation}
and define $T_{\omega} : L^2({\omega})\rightarrow H^1_0({\omega}) $ by 
\begin{equation}\label{e:Tomega}
T_\omega f=\theta.
\end{equation}
As discussed in \cite{kato1961}, this defines a densely
defined  unbounded
operator on $L^2({\omega})$, namely $L_{\omega} f:=T_{\omega}^{-1} f$ for $f$
in
$$D(L_{\omega}) := \{ T_{\omega}  \phi\ : \ \phi\in L^2({\omega})\}.$$
 The
operator $L_{\omega}$ is self-adjoint and positive so its fractional powers 
define a Hilbert
scale of interpolation spaces,  namely, for $r\ge 0$,
$$\dH r({\omega}) := D(L_{\omega}^{r/2})$$ 
with $D(L_{\omega}^r)$ denoting the domain of $L_{\omega}^r$.    These are Hilbert spaces
with norms 
$$\|w\|_{\dH r({\omega})} := \|L_{\omega}^{r/2} w\|_{L^2({\omega})}.$$
The space $\dH 1({\omega})$ coincides with $H^1_0({\omega})$ while $\dH 0(\omega)$ with $L^2(\omega)$, in both cases with equal norms.
Hence for $r\in[0,1]$, we have
$$\dH r({\omega})= (L^2({\omega}),H^1_0({\omega}))_{r,2},$$
where $(L^2({\omega}),H^1_0({\omega}))_{r,2}$ denotes the interpolation
spaces defined using the real method.  

Another characterization  of these spaces stems from Corollary 4.10 in \cite{chandler}, which states that for $r\in [0,1]$, 
the spaces $\tHr(\omega)$ are interpolation spaces.
Since $\tH^1(\omega)=H^1_0(\omega)$ and 
$\tH^0(\omega)=L^2(\omega)$,  $\tHs(\omega)$ coincides with
$\dH r(\omega)$. In particular, we have
\begin{equation}\label{e:dot-tilde}
C^{-1} \|\theta\|_{\dH {r}(\omega)}\le  \|\theta\|_{\widetilde H^{r}(\omega)} \le C  \|\theta\|_{\dH {r}(\omega)},
\end{equation}
for a constant $C$ only depending on $\omega$. 

The intermediate spaces can also be characterized by expansions in
the $L^2({\omega})$ orthonormal system of eigenvectors $\{\psi_i\}$    for
$T_{\omega}$, i.e., 
$$\dH {r}({\omega})= \bigg \{ \phi
\in L^2({\omega})\ : \sum_{i=1}^\infty \lambda_i
^r |(\phi,\psi_i)_{\omega} |^2 <\infty\bigg\}.$$
Here $\lambda_i=\mu_i^{-1}$ where $\mu_i$ is the eigenvalue of
$T_{\omega}$ associated with $\psi_i$.
In this case, we find that 
$$
\|\phi\|^2_{\dH r({\omega})} =\|L_{\omega}^{r/2}\phi\|^2_{L^2({\omega})} =\sum_{i=1}^\infty \lambda_i
^r |(\phi,\psi_i)_{\omega} |^2$$
and for $r\in (0,1)$, (see, e.g., \cite{00BZ}) 
$$\|\phi\|^2_{\dH r({\omega})}=\frac{2\sin\pi r}\pi \int_0^\infty t^{-2r} K_\omega(\phi,t) \frac
{dt}t.
$$
Here 
$$K_\omega(\phi,t) :=\inf _{w\in H^1_0({\omega}) } (\|\phi-w\|_{L^2({\omega})}^2+t^2
\|w\|_{H^1({\omega})}^2).
$$
Note that if ${\omega}' \subset {\omega}$ then since the extension of a
function $\phi$ in $H^1_0({\omega}')$ by zero is in $H^1_0({\omega})$,  the K-functional
identity implies that for all $r\in [0,1]$,
\begin{equation}
 \|\widetilde \phi\|_{\dH r({\omega})}\le \|\phi\|_{\dH r({\omega}')},
\label{extenhs}
\end{equation}
where $\widetilde \phi$ denotes the extension by zero of $\phi$ outside $\omega'$.

The operator $T_{\omega}$ extends naturally to $F\in H^{-1}({\omega})$ by setting
$T_{\omega} F=u$ where $u\in H^1_0({\omega})$ is the solution of
\eqref{varscale} with $(f,\phi)_{\omega}$ replaced by $\langle F,\phi
\rangle $. Here $\langle \cdot,\cdot\rangle $ denotes the
functional-function pairing.  
Identifying $f\in L^2({\omega})$ with the
functional $\langle F,\phi \rangle := (f,\phi)_{\omega}$, we define the
intermediate spaces for $r\in (-1,0) $ by
$$\dH {r}({\omega}) := (H^{-1}({\omega}),L^2({\omega}))_{1+r,2}$$
and set $\dH {-1}:= H^{-1}(\omega)$.
Since $T_{\omega}$ maps $H^{-1}({\omega})$ isomorphically onto $\dH 1({\omega})$ and
$L^2({\omega})$ isomorphically onto $\dH 2({\omega})$, 
$T_{\omega}$ maps $\dH {-r}({\omega})$ isometrically onto $\dH {2-r}(\omega)$ for
$r\in [0,1]$. 

Functionals in $H^{-1}({\omega})$ can also be characterized in terms of
the eigenfunctions of $T_{\omega}$, indeed, 
$H^{-1}({\omega})$ is the set of linear functionals $F$ for which the sum
$$\sum_{i=1}^\infty \lambda_i^{-1}  |\langle F,\psi_i \rangle |^2 $$
is finite.   Moreover,
$$\|F\|_{H^{-1}({\omega})} = \sup_{\theta\in H^1_0({\omega})} \frac {\langle
  F,\theta \rangle} {\|\theta\|_{H^1({\omega})}}= \bigg
(\sum_{i=1}^\infty \lambda_i^{-1}  |\langle F,\psi_i \rangle |^2 \bigg
)^{1/2}$$
for all $ F\in H^{-1}({\omega})$.
This implies that for $r\in [-1,0]$,
$$\dH r({\omega}) = \{ F\in \dH {-1}\ : \ \sum_{i=1}^\infty \lambda_i^{r}
|\langle F,\psi_i \rangle |^2<\infty\}$$
and 
$$  \|F\|_{\dH r(\omega)} = \bigg(\sum_{i=1}^\infty \lambda_i^{r}
|\langle F,\psi_i \rangle |^2\bigg)^{1/2}.$$

\begin{remark}[Norm equivalence for Lipschitz domains]\label{r:beta-equiv}
For $r\in(1,3/2)$, it is known that $\tHr(\omega)=H^r(\omega)\cap H^1_0(\omega)$. 
On the other hand, we note that when $\partial\omega$ 
is Lipschitz, $-\Delta$ is an isomorphism 
from $H^r(\omega)\cap H^1_0(\omega)$ to $\dot H^{r-2}(\omega)$; see Theorem 0.5(b) of \cite{JK95}. 
We apply this regularity result into Proposition 4.1 of \cite{BP13}
to obtain $H^r(\omega)\cap H^1_0(\omega)=\dot H^r(\omega)$. So
the norms of $\tHr(\omega)$ and $\dot H^r(\omega)$ are equivalent for $r\in[0,3/2)$ 
and the equivalence constant may depend on $\omega$. In what follows,
we use $\tHr(D)$ to describe the smoothness of functions defined on $D$. 
When functions defined on a larger domain (see Section~\ref{s:truncated} and \ref{s:FEM}),
we will use these interpolation spaces separately so that we can investigate 
the dependency of constants.
\end{remark}

We end the section with the following lemma:

\begin{lemma} \label{Tshift} 
Let $a$ be in $[0,2]$ and $b$ be in $[0,1]$
  with $a+b\le 2$.   Then for
   $\mu\in (0,\infty)$, we have
$$\| (\mu I + T_{\omega})^{-1} \phi \|_{\dH {-b}(\omega) } \le  \mu^{(a+b)/2-1}
\|\phi \|_{\dH a({\omega})},\Forall \phi\in \dH a({\omega}).$$
\end{lemma}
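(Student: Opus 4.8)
The plan is to diagonalize $T_\omega$ and reduce the bound to an elementary scalar inequality. Since $T_\omega\colon L^2(\omega)\to H^1_0(\omega)$ is bounded and the embedding $H^1_0(\omega)\hookrightarrow L^2(\omega)$ is compact, $T_\omega$ is a compact, self-adjoint and positive operator on $L^2(\omega)$; hence its $L^2(\omega)$-orthonormal eigenfunctions $\{\psi_i\}$ form a basis, with eigenvalues $\mu_i=\lambda_i^{-1}>0$. For $\mu>0$ the operator $\mu I+T_\omega$ is boundedly invertible on $L^2(\omega)$, and writing $\phi=\sum_i c_i\psi_i$ with $c_i=(\phi,\psi_i)_\omega$ one has $(\mu I+T_\omega)^{-1}\phi=\sum_i (\mu+\mu_i)^{-1}c_i\,\psi_i$. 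It suffices to treat the case $\|\phi\|_{\dH a(\omega)}<\infty$.

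First I would invoke the eigenfunction characterizations of the scales recalled above. Since $a\in[0,2]$ and $-b\in[-1,0]$ under the hypotheses, these give $\|\phi\|_{\dH a(\omega)}^2=\sum_i \mu_i^{-a}\,|c_i|^2$ and, applying the characterization with the (possibly negative) exponent $-b$ to the $L^2(\omega)$ function $g:=(\mu I+T_\omega)^{-1}\phi$, for which $(g,\psi_i)_\omega=c_i/(\mu+\mu_i)$,
$$
\|(\mu I+T_\omega)^{-1}\phi\|_{\dH{-b}(\omega)}^2=\sum_i \frac{\mu_i^{b}}{(\mu+\mu_i)^2}\,|c_i|^2 .
$$

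Next, I would establish the term-by-term estimate $\mu_i^{b}(\mu+\mu_i)^{-2}\le \mu^{a+b-2}\mu_i^{-a}$ for every $i$, equivalently $\mu_i^{a+b}\le \mu^{a+b-2}(\mu+\mu_i)^2$. Setting $x=\mu_i/\mu>0$ and dividing through by $\mu^{a+b}$, this reduces to the scalar inequality $x^{a+b}\le(1+x)^2$, which holds for every $x>0$ because $a+b\in[0,2]$ forces $x^{a+b}\le\max(1,x^2)\le(1+x)^2$. Summing over $i$ and taking square roots then yields
$$
\|(\mu I+T_\omega)^{-1}\phi\|_{\dH{-b}(\omega)}\le \mu^{(a+b)/2-1}\,\|\phi\|_{\dH a(\omega)},
$$
as claimed.

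The argument is essentially routine; the only steps requiring attention are that the eigenexpansion formulas for $\|\cdot\|_{\dH r(\omega)}$ genuinely cover the exponent range used here — in particular the negative exponent $-b$, obtained via the $H^{-1}(\omega)$-identification — and that the scalar inequality $x^{a+b}\le(1+x)^2$ be verified uniformly over the two regimes $x\le1$ and $x\ge1$, which is precisely what produces the sharp constant $1$ in the statement. If preferred, the compactness of $T_\omega$ (used only to obtain an honest orthonormal eigenbasis) can be bypassed in favour of the spectral theorem for bounded self-adjoint operators, replacing the sums over $i$ by integration against the spectral measure of $T_\omega$.
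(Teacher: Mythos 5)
Your proposal is correct and follows essentially the same route as the paper: expand in the $L^2(\omega)$-orthonormal eigenbasis of $T_\omega$, identify both norms as weighted $\ell^2$ sums of the coefficients, and conclude by a term-by-term scalar inequality (the paper phrases it as a weighted Young/AM--GM bound $\lambda_i^{-(a+b)/2}\mu^{1-(a+b)/2}\le \mu+\lambda_i^{-1}$, which is equivalent to your $x^{a+b}\le(1+x)^2$). The only cosmetic difference is that the paper first reduces to $\phi=T_\omega^{a/2}\theta$ with $\theta\in L^2(\omega)$ before expanding, which changes nothing of substance.
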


\begin{proof}  Let $\phi$ be in $\dH a({\omega})=D(L_\omega^{a/2})$.
Setting $\theta:=L_{\omega}^{a/2} \phi \in L^2(\omega)$, it suffices to prove that
 \begin{equation}
\| (\mu I + T_{\omega})^{-1} T_{\omega}^{a/2} \theta \|_{\dH {-b}(\omega) } \le   \mu^{(a+b)/2-1}
\|\theta \|_{L^2({\omega})},\Forall \theta\in L^2(\omega).\label{rewrite}
\end{equation}
The operator $T_{\omega}$ and its fractional powers are symmetric in
the $L^2({\omega})$ inner product.
Therefore, we have
$$\begin{aligned} \| (\mu I + T_{\omega})^{-1} T_{\omega}^{a/2} \theta \|^2_{\dH {-b}(\omega) } &=
\sum_{i=1}^\infty  |( (\mu I + T_{\omega})^{-1} T_{\omega}^{a/2}
\theta,\psi_i)_{\omega}|^2 \lambda_i^{-b}\\
&=\sum_{i=1}^\infty \frac {\lambda_i^{-a-b}} {(\mu +
  \lambda_i^{-1})^2} |(\theta,\psi_i)_{\omega}|^2.
\end{aligned}
$$
Inequality \eqref{rewrite} follows from Young's inequality
$$\lambda_i^{-(a+b)/2} \mu^{1-(a+b)/2} {(\mu +
  \lambda_i^{-1})}\le 1.
$$
\end{proof}

\section{An Alternative Integral Representation of the Bilinear Form}\label{s:alternative}

The goal of this section is to derive the integral expression
\eqref{intid} and some of its properties.  

\begin{theorem}[Equivalent Representation] \label{t:alternate1} 
Let $s\in(0,1)$ and  $0\le r\le s$.   For $\eta\in H^{s+r}(\RRD)$ and
$\theta\in H^{s-r} (\RR^d)$,
\begin{equation}
((-\Delta)^{(s+r)/2} \eta,(-\Delta)^{(s-r)/2}\theta)= c_s \int _0^\infty t^{2-2s} (-\Delta (I-t^2\Delta)^{-1}\eta,\theta)\, 
\frac {dt}t,
\label{is0}
\end{equation}
where
\begin{equation}\label{e:cs}
c_s:=\bigg( \int_0^\infty \frac {y^{1-2s}} {1+y^2}\, dy\bigg)^{-1} 
=\frac{2\sin(\pi s)}{\pi}.
\end{equation}
\end{theorem}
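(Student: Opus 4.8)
The plan is to prove the identity on the Fourier side, where every operator becomes multiplication. For $\eta \in H^{s+r}(\RRD)$ and $\theta \in H^{s-r}(\RRD)$, Plancherel's theorem gives
\[
((-\Delta)^{(s+r)/2}\eta,(-\Delta)^{(s-r)/2}\theta)
= \int_{\RRD} |\zeta|^{s+r}\,|\zeta|^{s-r}\,\cF(\eta)(\zeta)\,\overline{\cF(\theta)(\zeta)}\, d\zeta
= \int_{\RRD} |\zeta|^{2s}\,\cF(\eta)(\zeta)\,\overline{\cF(\theta)(\zeta)}\, d\zeta .
\]
On the right-hand side, $(-\Delta)(I-t^2\Delta)^{-1}$ is the Fourier multiplier $|\zeta|^2/(1+t^2|\zeta|^2)$, so
\[
(-\Delta(I-t^2\Delta)^{-1}\eta,\theta)
= \int_{\RRD} \frac{|\zeta|^2}{1+t^2|\zeta|^2}\,\cF(\eta)(\zeta)\,\overline{\cF(\theta)(\zeta)}\, d\zeta .
\]
Thus the claimed identity \eqref{is0} will follow once we justify interchanging the $t$-integral with the $\zeta$-integral and verify the scalar identity
\[
c_s \int_0^\infty t^{2-2s}\,\frac{|\zeta|^2}{1+t^2|\zeta|^2}\,\frac{dt}{t} = |\zeta|^{2s},
\qquad \zeta \neq 0 .
\]
The scalar identity is a one-line substitution: put $y = t|\zeta|$, so $dt/t = dy/y$ and $t^{2-2s} = y^{2-2s}|\zeta|^{2s-2}$, giving $c_s |\zeta|^{2s}\int_0^\infty y^{1-2s}/(1+y^2)\, dy = |\zeta|^{2s}$ by the definition \eqref{e:cs} of $c_s$; the Beta-integral $\int_0^\infty y^{1-2s}(1+y^2)^{-1}dy$ converges precisely because $0 < s < 1$, and its value $\pi/(2\sin\pi s)$ is standard.

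The technical heart of the argument is the Fubini/Tonelli justification, which also yields the absolute convergence claimed after \eqref{intid}. I would bound, for fixed $t$,
\[
\Big| \frac{|\zeta|^2}{1+t^2|\zeta|^2} \Big|
\le |\zeta|^{2r}\cdot|\zeta|^{2(s-r)}\cdot\frac{|\zeta|^{2-2s}}{1+t^2|\zeta|^2}
\]
is not quite the right split; instead, the clean route is to integrate first in $t$ using Tonelli on the nonnegative integrand $t^{1-2s}|\zeta|^2/(1+t^2|\zeta|^2)\,|\cF(\eta)||\cF(\theta)|$. The inner $t$-integral evaluates (by the scalar identity) to a finite constant times $|\zeta|^{2s}|\cF(\eta)(\zeta)||\cF(\theta)(\zeta)|$, and then
\[
\int_{\RRD} |\zeta|^{2s}\,|\cF(\eta)(\zeta)|\,|\cF(\theta)(\zeta)|\, d\zeta
\le \Big(\int_{\RRD} |\zeta|^{2(s+r)}|\cF(\eta)|^2\Big)^{1/2}\Big(\int_{\RRD} |\zeta|^{2(s-r)}|\cF(\theta)|^2\Big)^{1/2}
\le \|\eta\|_{H^{s+r}(\RRD)}\|\theta\|_{H^{s-r}(\RRD)}
\]
by Cauchy–Schwarz and $|\zeta|^{2s} = |\zeta|^{s+r}|\zeta|^{s-r}$. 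This shows the double integral is absolutely convergent, so Fubini applies and the interchange in the first paragraph is legitimate; it simultaneously proves the absolute convergence of the $t$-integral in \eqref{is0}. The main obstacle is purely one of bookkeeping near $\zeta = 0$ and $|\zeta| = \infty$: one must check that the Sobolev weights $(1+|\zeta|^2)^{(s\pm r)/2}$ genuinely control $|\zeta|^{2s}|\cF(\eta)||\cF(\theta)|$ — which they do since $|\zeta|^{2s} \le (1+|\zeta|^2)^{(s+r)/2}(1+|\zeta|^2)^{(s-r)/2}$ for all $\zeta$ when $r \le s$ — and that the low-frequency behavior causes no trouble, which is automatic because $|\zeta|^2/(1+t^2|\zeta|^2) \le |\zeta|^2 \to 0$. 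No boundary or domain-truncation issues arise here since everything is posed on all of $\RRD$.

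Finally, I would remark that the specialization to $r = 0$ recovers the symmetric form appearing in \eqref{intid}, and that once \eqref{is0} is established for Schwartz functions it extends to $H^{s+r}(\RRD) \times H^{s-r}(\RRD)$ by the density of $\cS$ and the continuity estimate just displayed — though in fact the Plancherel argument above applies directly to all such $\eta,\theta$ without invoking density, since the Fourier-side manipulations only require the integrability we have verified.
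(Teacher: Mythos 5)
Your proposal is correct and follows essentially the same route as the paper's proof: pass to the Fourier side via Parseval, evaluate the $t$-integral of $t^{1-2s}|\zeta|^2/(1+t^2|\zeta|^2)$ by the substitution $y=t|\zeta|$ and the definition of $c_s$, and justify the interchange of integrals by Tonelli/Fubini using the absolute convergence bound $\int_{\RRD}|\zeta|^{2s}|\cF(\eta)||\cF(\theta)|\,d\zeta<\infty$. Your Cauchy--Schwarz step just makes explicit the finiteness the paper asserts from $\eta\in H^{s+r}(\RRD)$, $\theta\in H^{s-r}(\RRD)$, so no substantive difference.
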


\begin{proof}  
Let $I(\eta,\theta)$ denotes the right hand side of \eqref{is0}.    
Parseval's theorem implies that
\begin{equation}
 (-\Delta (I-t^2\Delta)^{-1}\eta,\theta)=
\int_\RRD \frac {|\zeta|^2}{1+t^2 |\zeta|^2} \cF(
\eta)(\zeta)\overline{\cF(
\theta)(\zeta)}  \, d\zeta.
\label{ftid}
\end{equation}
and so
\begin{equation}    I(\eta,\theta) 
=c_s \int _0^\infty t^{1-2s} 
\int_\RRD \frac {|\zeta|^2}{1+t^2 |\zeta|^2} \cF(
\eta)(\zeta) \overline{\cF(
\theta)(\zeta)}\, d\zeta\, {dt}.
\label{firsti}
\end{equation}
In order to invoke Fubini's theorem, we now show that
$$
c_s
\int_\RRD  \int _0^\infty t^{1-2s} \frac {|\zeta|^2}{1+t^2 |\zeta|^2}  |\cF(
\eta)(\zeta)|\ | \cF(
\theta)(\zeta)|\, d\zeta\, {dt} < \infty.
$$
Indeed, the change of variable $y=t|\zeta|$ and the definition \eqref{e:cs} of $c_s$ implies that the above integral is equal to
\begin{equation*}
\begin{split}
c_s\int_\RRD |\cF(
\eta)(\zeta)||\cF(
\theta)(\zeta)|  \int _0^\infty t^{1-2s}
 \frac {|\zeta|^2}{1+t^2 |\zeta|^2} \, dt\, d\zeta =  \int_\RRD|\zeta |^{2s}  |\cF(
\eta)(\zeta)|\,  |\cF(
\theta)(\zeta)| \, \, d\zeta,  
\end{split}
\end{equation*}
which is finite for $\eta\in H^r(\RRD)$ and
$\theta\in H^{s-r} (\RR^d)$.
We now apply Fubini's theorem and  the same change of variable $y=t|\zeta|$ in \eqref{firsti} to arrive at
$$\begin{aligned}  I(\eta,\theta) 
&=\int_\RRD  |\zeta| ^{2s} \cF(
\eta)(\zeta) \overline{\cF(
\theta)(\zeta)} \, d\zeta= 
((-\Delta)^{(s+r)/2} \eta,(-\Delta)^{(s-r)/2}\theta).
\end{aligned}$$
This completes the proof.\end{proof}

Theorem~\ref{t:alternate1} above implies that  for $\eta,\theta$ in $\tHs(D)$,
\begin{equation}\begin{aligned} a(\eta,\theta) &=  c_s \int _0^\infty t^{-2s} (w(\widetilde\eta,t),\theta)_D\, 
\frac {dt}t,
\end{aligned}
\label{is}
\end{equation}
where for $\psi \in L^2(\RRD)$
$$w(t):=w(\psi,t):= -t^2  \Delta
(I-t^2\Delta)^{-1}\psi .$$
Examining the Fourier transform of $w(\psi,t)$, we realize that
$w(t):=w(\psi,t):=\psi+v(\psi,t)$
where $v(t):=v(\psi,t) \in H^1(\RRD) $ solves
\begin{equation}
(v(t),\phi)+t^2 d(v(t),\phi)=-(\psi,\phi),\Forall \phi \in
H^1(\RRD) .
\label{vt}
\end{equation}

The integral in \eqref{is} is the basis of a numerical method for
\eqref{mainWP}.  The following lemma, instrumental in our analyze, provides an alternative
characterization for the inner product appearing on the right hand side
of \eqref{is}.

\begin{lemma}\label{l:K} Let $\eta$ be in $L^2(\RRD)$. Then,
\begin{equation}
(w(\eta,t),\eta)= \inf_{\theta\in H^1(\RRD) } \{ \|\eta-\theta\|^2 +t^2
d(\theta,\theta)\}=: K(\eta,t).
\label{kmin1}
\end{equation} 
\end{lemma}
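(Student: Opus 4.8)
The plan is to show that the infimum defining $K(\eta,t)$ is attained precisely at $\theta=w(\eta,t)$ and that the value there equals $(w(\eta,t),\eta)$. First I would recognize that the right-hand side of \eqref{kmin1} is a standard quadratic minimization problem on the Hilbert space $H^1(\RRD)$: the functional $\theta\mapsto J(\theta):=\|\eta-\theta\|^2+t^2 d(\theta,\theta)$ is strictly convex, coercive (since $\|\theta\|^2+t^2\||\nabla\theta|\|^2\ge \min(1,t^2)\|\theta\|_{H^1(\RRD)}^2$ and the cross term is controlled), and continuous, so it has a unique minimizer $\theta^*\in H^1(\RRD)$ characterized by the Euler--Lagrange equation obtained by setting the first variation to zero:
\begin{equation*}
(\theta^*-\eta,\phi)+t^2 d(\theta^*,\phi)=0,\qquad\Forall\phi\in H^1(\RRD),
\end{equation*}
i.e. $(\theta^*,\phi)+t^2 d(\theta^*,\phi)=(\eta,\phi)$ for all $\phi\in H^1(\RRD)$.

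Next I would identify this minimizer with $w(\eta,t)$. Recall from \eqref{vt} that $v(t)=v(\eta,t)\in H^1(\RRD)$ solves $(v(t),\phi)+t^2 d(v(t),\phi)=-(\eta,\phi)$ for all $\phi$, and $w(\eta,t)=\eta+v(\eta,t)$. Testing, for arbitrary $\phi\in H^1(\RRD)$,
\begin{equation*}
(w(\eta,t),\phi)+t^2 d(w(\eta,t),\phi)=(\eta,\phi)+\big[(v(t),\phi)+t^2 d(v(t),\phi)\big]+t^2 d(\eta,\phi).
\end{equation*}
Here one must be slightly careful: $\eta\in L^2(\RRD)$ only, so $d(\eta,\phi)$ is not literally defined. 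The clean way is to work on the Fourier side, or equivalently to note that $w(\eta,t)=\eta+v(t)$ satisfies, by the very definition $w(t)=-t^2\Delta(I-t^2\Delta)^{-1}\eta$, the identity $(I-t^2\Delta)w(t)=-t^2\Delta\eta=(I-t^2\Delta)\eta-\eta$, hence $w(t)-t^2\Delta w(t)=\eta$ in the distributional/$H^{-1}$ sense, which is exactly the weak formulation $(w(t),\phi)+t^2 d(w(t),\phi)=(\eta,\phi)$ for all $\phi\in H^1(\RRD)$. Thus $\theta^*=w(\eta,t)$.

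Finally I would evaluate $J(w(\eta,t))$. Writing $w=w(\eta,t)$, expand $J(w)=\|\eta-w\|^2+t^2 d(w,w)=\|\eta\|^2-2(\eta,w)+\|w\|^2+t^2 d(w,w)$, and use the Euler--Lagrange identity with test function $\phi=w$: $\|w\|^2+t^2 d(w,w)=(\eta,w)$. Substituting gives $J(w)=\|\eta\|^2-2(\eta,w)+(\eta,w)=\|\eta\|^2-(\eta,w)$. This is not yet $(w,\eta)$, so a second step is needed: I would instead test with $\phi=\eta$ — but $\eta\notin H^1(\RRD)$ in general. The resolution is to observe $K(\eta,t)=\|\eta\|^2-(\eta,w)=(\eta,\eta-w)=(\eta,-v(t))$, and then check directly from \eqref{vt} (valid since $-v(t)\in H^1(\RRD)$ is an admissible test function, or again via Fourier/Parseval) that $(\eta,-v(t))=(w(t),\eta)$; indeed on the Fourier side $(w(\eta,t),\eta)=\int_\RRD \frac{t^2|\zeta|^2}{1+t^2|\zeta|^2}|\cF(\eta)|^2\,d\zeta$ and $\|\eta\|^2-(\eta,w(t))=\int_\RRD\big(1-\frac{t^2|\zeta|^2}{1+t^2|\zeta|^2}\big)|\cF(\eta)|^2\,d\zeta=\int_\RRD\frac{1}{1+t^2|\zeta|^2}|\cF(\eta)|^2\,d\zeta$, and these agree only if... so in fact the cleanest route throughout is Parseval: write everything in Fourier variables, where $\cF(w(\eta,t))(\zeta)=\frac{t^2|\zeta|^2}{1+t^2|\zeta|^2}\cF(\eta)(\zeta)$, minimize the integrand $|\cF(\eta)-\cF(\theta)|^2+t^2|\zeta|^2|\cF(\theta)|^2$ pointwise in $\zeta$ (a scalar quadratic whose minimizer is $\cF(\theta)=\frac{t^2|\zeta|^2}{1+t^2|\zeta|^2}\cF(\eta)$ with minimum value $\frac{t^2|\zeta|^2}{1+t^2|\zeta|^2}|\cF(\eta)|^2$), and integrate. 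I expect the main obstacle to be precisely this bookkeeping with the low regularity of $\eta$: one should either consistently use the Fourier/Parseval representation (which makes both the pointwise minimization and the evaluation of $(w(\eta,t),\eta)$ transparent and sidesteps ever needing $d(\eta,\cdot)$), or justify an approximation argument replacing $\eta$ by smooth functions. The Fourier approach is the one I would write up.
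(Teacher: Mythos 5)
Your final plan --- pass to the Fourier side by Parseval, minimize the scalar quadratic $|\cF(\eta)(\zeta)-z|^2+t^2|\zeta|^2|z|^2$ pointwise in $\zeta$, and integrate --- is exactly the paper's proof, and the pointwise minimum \emph{value} you record, $\frac{t^2|\zeta|^2}{1+t^2|\zeta|^2}|\cF(\eta)(\zeta)|^2$, is correct. However, there is a genuine error running through the write-up: you identify the minimizer with $w(\eta,t)$, and it is not. The minimizer of the scalar quadratic is $z=\cF(\eta)(\zeta)/(1+t^2|\zeta|^2)$, i.e.\ the Fourier transform of $-v(\eta,t)=(I-t^2\Delta)^{-1}\eta=\eta-w(\eta,t)$, whereas $\cF(w(\eta,t))=\frac{t^2|\zeta|^2}{1+t^2|\zeta|^2}\cF(\eta)$; evaluating the functional at your claimed minimizer does not produce the claimed value (take $t|\zeta|\to 0$ to see it). The algebra offered in support is a non sequitur: from $(I-t^2\Delta)w(t)=-t^2\Delta\eta$ one gets $w(t)-t^2\Delta w(t)=-t^2\Delta\eta$, not $=\eta$, so $w(t)$ does not solve the Euler--Lagrange equation $(\theta^*,\phi)+t^2d(\theta^*,\phi)=(\eta,\phi)$, and the step ``test with $\phi=w$ to get $\|w\|^2+t^2d(w,w)=(\eta,w)$'' is unjustified. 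Worse, for $\eta\in L^2(\RRD)$ the function $w(\eta,t)$ need not belong to $H^1(\RRD)$ at all (its symbol behaves like $\cF(\eta)$ at high frequency), so it is not even an admissible competitor and $d(w,w)$ may be infinite; this is precisely the inconsistency you noticed when your two evaluations of the minimum disagreed.

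The repair is small and brings you back to the paper's argument: the pointwise minimizer $\cF(\eta)(\zeta)/(1+t^2|\zeta|^2)$ is the transform of $-v(\eta,t)$, which lies in $H^2(\RRD)\subset H^1(\RRD)$, so the infimum over $H^1(\RRD)$ is attained there and equals the integral of the pointwise minima, namely $\int_\RRD \frac{t^2|\zeta|^2}{1+t^2|\zeta|^2}|\cF(\eta)(\zeta)|^2\,d\zeta=(w(\eta,t),\eta)$ by \eqref{ftid}. Alternatively, your variational route also closes once the minimizer is correctly identified: with $\theta^*=-v(\eta,t)$ the Euler--Lagrange identity gives $K(\eta,t)=\|\eta\|^2-(\eta,\theta^*)=(\eta,\eta+v(\eta,t))=(w(\eta,t),\eta)$, with no need to ever test with $\eta$ or make sense of $d(\eta,\cdot)$. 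As written, though, the proposal is not correct.
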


\begin{proof}
Let $\eta$ be in $L^2(\RRD)$.
We start by observing that for any positive $t$ and $\zeta\in \RRD$,
$$\hat \phi(\zeta):= \frac{ \cF(\eta) (\zeta)} {1+t^2|\zeta|^2} $$
solves the minimization problem
$$\inf_{z\in \CC} \{|\cF(\eta)(\zeta)-z|^2+t^2 |\zeta|^2 |z|^2\}$$
and so
\begin{equation}
\inf_{z\in \CC} \{|\cF(\eta)(\zeta)-z|^2+t^2 |\zeta|^2 |z|^2\}=
\frac {t^2 |\zeta|^2} {1+t^2|\zeta|^2} |\cF(\eta)(\zeta)|^2.
\label{point}
\end{equation}

We
denote $\phi$ 
to be the inverse Fourier transform of $\hat \phi$. 
Note that $\phi$ is in 
$H^1(\RRD)$  (actually,  $\phi$ is in $ H^2(\RRD)$).  

Applying the Fourier transform, we find that 
\begin{equation}
 K(\eta,t)=\inf_{\theta\in H^1(\RRD) } \int_\RRD 
(|\cF(\eta)(\zeta)- \cF(\theta)(\zeta)|^2 +t^2 |\zeta|^2 |\cF(\theta)(\zeta)|^2)\,
d\zeta.
\label{kmin2}
\end{equation}
Now, $\phi$ is the pointwise minimizer of the integrand in
\eqref{kmin2} and since $\phi \in H^1(\RRD)$, it is also the minimizer of \eqref{kmin1}.
In addition, \eqref{point}, \eqref{kmin2} and \eqref{ftid}
 imply that
$$ K(\eta,t)= \int_\RRD \frac {t^2 |\zeta|^2} {1+t^2|\zeta|^2} |\cF(\eta)(\zeta)|^2
\, d\zeta = (w(\eta,t),\eta).
$$
This completes the proof of the lemma.
\end{proof}

\begin{remark}[Relation with the vanishing Dirichlet boundary condition case] \label{r:bounded_K}
The above lemma implies that for $\eta\in \tHs(D)$,
$$ a(\eta,\eta) 
= c_s \int _0^\infty t^{-2s} K(\widetilde \eta,t)\, 
\frac {dt}t.
$$
It is observed in the Appendix of \cite{00BZ} that for any bounded domain
$\omega$, and $\eta\in
(L^2(\omega),H^1_0(\omega))_{s,2}$, the real interpolation space between $L^2(\omega)$ and $H^1_0(\omega)$, we have
$$\|\eta \|_{(L^2(\omega),H^1_0(\omega))_{s,2}}^2= c_s \int
_0^\infty t^{-2s} K_{\omega}^0 (\eta,t)\, 
\frac {dt}t$$
where
\begin{equation}
K_{\omega}^0(\eta,t) :=\inf_{\theta\in H_0^1(\omega) } \{
  \|\eta-\theta\|_{L^2(\omega)}^2 +t^2
d_{\omega}(\theta,\theta)\}.
\label{infOmega}
\end{equation}

Let $\{\psi_i^0\}\subset H^1_0(\omega)$ denote the $L^2(\omega)$-orthonormal basis of eigenfunctions satisfying  
$$d_{\omega}(\psi_i^0,\theta)= \lambda_i (\psi_i^0,\theta)_{\omega},\Forall \theta\in
H^1_0(\omega).$$
As the proof in Lemma~\ref{l:K} but using the expansion in the above
eigenfunctions,
it is not hard to see that
\begin{equation}
(w_{{\omega}}(\eta,t),\eta)_{\omega} = K_{\omega}^0(\eta,t) 
\label{womegaid}
\end{equation}
with $w_{{\omega}}(\eta,t)=\eta+v$ and $v
\in H^1_0({\omega})$ solving 
$$(v,\theta)_{\omega} +t^2 d_{\omega}(v,\theta)=-(u,\theta)_{\omega},\Forall
\theta \in H^1_0({\omega}).$$
This means that if $\eta\in L^2(\omega)$, $K(\widetilde\eta,t)\le K_\omega^0(\eta,t)$ and hence
$$
(w(\widetilde\eta,t),\eta)_\omega\le (w_{{\omega}}(\eta,t),\eta)_{\omega}.$$
\end{remark}


\section{Exponentially Convergent Sinc Quadrature}\label{s:quad}
In this section, we analyze a sinc quadrature scheme applied to the integral \eqref{is}.
Notice that the analysis provided in \cite{BLP18} does not strictly apply in the present context.

\subsection{The Quadrature Scheme}
We first use the change of variable $t^{-2} = e^y$ so that \eqref{is} becomes
$$
a(\eta,\theta)= \frac{c_s}{2} \int _{-\infty}^\infty e^{sy} (w(\widetilde \eta,t(y)),\theta)_D \, dy.
$$
Given a quadrature spacing $k>0$ and two positive integers $\Nminus$ and $\Nplus$, set
$y_j:=j k$ so that
\begin{equation}
\label{e:tj}
t_j = e^{-y_j/2} = e^{-jk/2}
\end{equation}
and define the approximation of $a(\eta,\theta)$ by
\begin{equation}
a^k(\eta,\theta):=\frac{c_s k}{2}  \sum_{j=-\Nminus}^\Nplus e^{sy_j} (w(\widetilde \eta,t_j),\theta)_D.
\label{ik}
\end{equation}

\subsection{Consistency Bound}
The convergence of the sinc quadrature depends on the properties of the integrand
\begin{equation}
g(y;\eta,\theta):=e^{s y}  (w(\widetilde \eta,t(y)),\theta)_D=e^{sy}\left( -\Delta(e^yI-\Delta)^{-1}\widetilde \eta,\widetilde\theta \right) .
\label{integrant}
\end{equation}
More precisely, the following conditions are required:
\begin{enumerate}[(a)]
 \item $g(\cdot;\eta,\theta)$ is an analytic function in the band
 $$
B=B(\dd):=\left\{z=y+iw \in \mathbb C : \ |w|< \dd\right\} ,$$
where $\dd$ is a fixed constant in $(0, \pi)$.
\item There exists a constant $C$ independent of $y\in\RR$ such that
$$
\int_{-\dd}^{\dd} |g(y+iw;\eta,\theta)|\, dw\leq C;
$$
\item 
$$N(B):=\int_{-\infty}^\infty \left(|g(y+i\dd;\eta,\theta)|+|g(y-i\dd;\eta,\theta)| \right) dy < \infty .$$
\end{enumerate}
In that case, there holds (see Theorem 2.20 of \cite{sinc_int})
\begin{equation}
\label{inf}
\bigg|\int_{-\infty}^\infty g(y;\eta,\theta)\, dy-k\sum_{j=-\infty}^\infty g(kj;\eta,\theta)\bigg|\le \frac{N(B)}{e^{2\pi \dd/k}-1}.
\end{equation}

In our context, this leads to the following estimates for the sinc quadrature error.
\begin{theorem}[Sinc quadrature]\label{t:quad_error}
Suppose $\theta \in\tHs(D)$ and $\eta \in \widetilde{H}^\delta(D)$ with $\delta\in (s,2-s]$.
Let $a(\cdot,\cdot)$ and $a^k(\cdot,\cdot)$ be defined by \eqref{mainWP}  and \eqref{ik}, respectively. Then
we have
\begin{equation}
\begin{aligned}
|a(\eta,\theta)-a^k(\eta,\theta)|
&\le \frac{2c(\dd)}{\delta-s} \left( \frac{2}{e^{2\pi\dd/k}-1} + e^{(s-\delta)\Nplus k/2}\right)\|\eta\|_{\widetilde{H}^\delta(D)} \|\theta\|_{\widetilde{H}^s(D)} \\
& + \frac{c(\dd)}{s} \left( \frac{2}{e^{2\pi\dd/k}-1} + e^{-s \Nminus k}\right) \|\eta \|_{L^2(D)}\|\theta\|_{L^2(D)},
\end{aligned}
\label{quadbound}
\end{equation}
where $c(\dd):=\frac{1}{\sqrt{(1+\cos \dd)/2}}$.
\end{theorem}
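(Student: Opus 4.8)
The plan is to verify the three hypotheses (a), (b), (c) for the integrand $g(\cdot;\eta,\theta)$ of \eqref{integrant}, apply the sinc estimate \eqref{inf}, and then handle the two tail contributions coming from truncating the infinite sum to $j\in\{-\Nminus,\dots,\Nplus\}$. The key analytic tool throughout is the Fourier/Parseval representation: from \eqref{ftid},
$$
g(y;\eta,\theta)=e^{sy}\int_\RRD \frac{|\zeta|^2}{e^y+|\zeta|^2}\,\cF(\widetilde\eta)(\zeta)\,\overline{\cF(\widetilde\theta)(\zeta)}\,d\zeta,
$$
and the complex extension $z=y+iw$ simply replaces $e^y$ by $e^z$ in the kernel. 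Analyticity in the band $B(\dd)$ is then immediate once one checks that $e^z+|\zeta|^2$ does not vanish for $|\Im z|<\pi$, which holds since $e^z$ is a negative real number only when $\Im z\in\pi+2\pi\ZZ$. So hypothesis (a) is free.

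The heart of the matter is the pointwise kernel bound. For $z=y+iw$ with $|w|\le\dd<\pi$, I would show
$$
\left|\frac{|\zeta|^2}{e^z+|\zeta|^2}\right|\le \frac{1}{\sqrt{(1+\cos\dd)/2}}\cdot\frac{|\zeta|^2}{e^y+|\zeta|^2}\cdot\frac{?}{}\ ,
$$
more precisely one wants $|e^z+|\zeta|^2|\ge c(\dd)^{-1}\max(e^y,|\zeta|^2)$ or a comparable lower bound; writing $a=e^y>0$, $b=|\zeta|^2\ge 0$, one has $|ae^{iw}+b|^2=a^2+b^2+2ab\cos w\ge (a^2+b^2)(1) + 2ab\cos\dd \ge (1+\cos\dd)(a^2+b^2)/? $ — the clean inequality is $a^2+b^2+2ab\cos w \ge \tfrac{1+\cos\dd}{2}(a+b)^2$ for $|w|\le\dd$, which gives exactly the constant $c(\dd)=\bigl((1+\cos\dd)/2\bigr)^{-1/2}$ appearing in the statement. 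With this in hand, $|g(y+iw;\eta,\theta)|\le c(\dd)\,|g_{\mathrm{real}}(y)|$-type bounds follow, where $g_{\mathrm{real}}(y)$ uses the real kernel $e^{sy}|\zeta|^2/(e^y+|\zeta|^2)$. I expect this elementary but slightly fiddly inequality to be the main obstacle — getting precisely the stated constant rather than some larger one.

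Given the kernel bound, (b) and (c) reduce to estimating $\int_\RRD \tfrac{e^{sy}|\zeta|^2}{e^y+|\zeta|^2}|\cF(\widetilde\eta)||\cF(\widetilde\theta)|\,d\zeta$ uniformly (resp. integrably) in $y$. Splitting $|\zeta|^2 = |\zeta|^{2\sigma}\cdot|\zeta|^{2(1-\sigma)}$ and using $\tfrac{e^{sy}}{e^y+|\zeta|^2}\le \min(e^{(s-1)y}, e^{sy}|\zeta|^{-2})$, one interpolates: pairing $|\zeta|^{2r}$ against $\cF(\widetilde\eta)$ (controlled by $\|\eta\|_{\tH^\delta(D)}$ when $2r\le\delta$, resp. by $\|\eta\|_{L^2}$ when $r=0$) and $|\zeta|^{2(s-r)}$... — cleanest is to distinguish the regime $e^y\ge 1$ (where one extracts $\delta$-regularity of $\eta$ and the factor $e^{(s-\delta)y/2}$) from $e^y\le 1$ (where one uses only $L^2$ and gets $e^{sy}$). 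This simultaneously shows $N(B)<\infty$ with $N(B)\le C c(\dd)\bigl[\tfrac{1}{\delta-s}\|\eta\|_{\tH^\delta}\|\theta\|_{\tH^s}+\tfrac1s\|\eta\|_{L^2}\|\theta\|_{L^2}\bigr]$ (the $\tfrac1{\delta-s}$ and $\tfrac1s$ come from $\int_0^\infty e^{-(\delta-s)y/2}dy$ and $\int_{-\infty}^0 e^{sy}dy$), and bounds the truncated tails: $\bigl|k\sum_{j>\Nplus}g(kj)\bigr|\le k\sum_{j>\Nplus}C c(\dd)e^{(s-\delta)jk/2}\|\eta\|_{\tH^\delta}\|\theta\|_{\tH^s}\lesssim \tfrac{c(\dd)}{\delta-s}e^{(s-\delta)\Nplus k/2}\|\eta\|_{\tH^\delta}\|\theta\|_{\tH^s}$, and symmetrically $\bigl|k\sum_{j<-\Nminus}g(kj)\bigr|\lesssim \tfrac{c(\dd)}{s}e^{-s\Nminus k}\|\eta\|_{L^2}\|\theta\|_{L^2}$.

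Finally I would assemble the pieces: $a(\eta,\theta)-a^k(\eta,\theta)$ equals the sinc error $\bigl(\int - k\sum_{-\infty}^\infty\bigr)g$ plus the two truncated tails $k\sum_{j>\Nplus}g+k\sum_{j<-\Nminus}g$. Bounding the first by $N(B)/(e^{2\pi\dd/k}-1)$ via \eqref{inf} and splitting $N(B)$ into its $\tH^\delta$–$\tH^s$ part and its $L^2$–$L^2$ part, then matching each part with the corresponding tail estimate, yields exactly the two-line bound \eqref{quadbound}, with the factor $2$ in $\tfrac{2}{e^{2\pi\dd/k}-1}$ absorbing the constant in $N(B)$ and the factor $2c(\dd)/(\delta-s)$, $c(\dd)/s$ emerging as described. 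The one bookkeeping point to be careful about is that $\delta\le 2-s$ guarantees $\delta-s\le 2-2s$ so the $\tH^\delta$ regularity is genuinely usable in the large-$|\zeta|$ regime and no further smoothness is needed.
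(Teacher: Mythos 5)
Your proposal is correct and follows essentially the same route as the paper: the pointwise kernel bound $|e^z+|\zeta|^2|\ge c(\dd)^{-1}(e^{\Re z}+|\zeta|^2)$ (your $(a-b)^2$ identity is exactly what delivers the stated constant), the two-regime splitting $\Re z<0$ versus $\Re z\ge 0$ with Young's inequality in the latter, the resulting bounds on $N(B)$ and the two truncated tails, and the final assembly via \eqref{inf} all match the paper's argument. The only cosmetic difference is that you argue analyticity on the Fourier side (non-vanishing of $e^z+|\zeta|^2$ for $|\Im z|<\pi$) whereas the paper uses a Neumann-series argument for $(e^zI-\Delta)^{-1}$; both are fine.
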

\begin{proof}
We start by showing that the conditions (a), (b) and (c) hold.
For (a), we note that  $g(\cdot;\eta,\theta)$ in analytic on $B$ if and only if the operator mapping 
$z \mapsto (e^{z}I-\Delta)^{-1}$ is analytic on $B$.
To see the latter, we fix $z_0 \in B$ and set $p_0 := e^{z_0}$. 
Clearly, 
$p_0 I-\Delta$ is invertible from $L^2(\RRD)$ to $L^2(\RRD)$. Let $M_0:= \| (p_0 I -\Delta)^{-1} \|_{L^2(\mathbb R^d) \to L^2(\mathbb R^d)}$.
For $p \in \mathbb C$, we write
$$
pI-\Delta = (p-p_0)I + (p_0I -\Delta) = (p_0I-\Delta) \left( (p-p_0)(p_0I-\Delta)^{-1} + I \right),
$$
so that the Neumann series representation
$$
(pI-\Delta)^{-1} = \left(\sum_{j=0}^\infty (-1)^j (p-p_0)^j (p_0I-\Delta)^{-j}\right)(p_0I-\Delta)^{-1}
$$
is uniformly convergent provided $\| (p-p_0) (p_0I-\Delta)^{-1}\|_{L^2(\mathbb R^d) \to L^2(\mathbb R^d)} < 1$ or
$$
| p - p_0 | < 1/M_0.
$$
Hence $(pI-\Delta)^{-1}$ is analytic in an open neighborhood of $p_0=e^{z_0}$ for all $p_0 \in B$ and (a) follows.

To prove (b) and (c), we first bound $g(z;\eta,\theta)$ for $z$ in the band $B$. 
Assume $\eta\in \widetilde{H}^\beta(D)$ and $\theta \in\tHs(D)$ with $\beta>s$.
For $z\in B$, we use the Fourier transform and estimate $|g|$ as follows
$$
\begin{aligned}
|g(z;\eta,\theta)|&= \left| e^{sz}\int_{\RRD} \frac{|\zeta|^2}{e^z+|\zeta|^2}  \mathcal F(\widetilde \eta) \overline{ \mathcal F(\widetilde \theta)} \, d\zeta \right|\\
&\le c(\dd) e^{s\Re z} \int_{\mathbb{R}^d}\frac{|\zeta|^2}{e^{\Re z}+|\zeta|^2}|\mathcal F(\widetilde \eta)| | \mathcal F(\widetilde \theta)| \, d\zeta ,\\
\end{aligned}
$$
where $c(\dd)=\frac{1}{\sqrt{(1+\cos \dd)/2}}$ and upon noting that
$$
	|e^z+|\zeta|^2|\ge c(\Im z)^{-1}(e^{\Re z}+|\zeta|^2)\ge c(\dd)^{-1}(e^{\Re z}+|\zeta|^2).
$$ 
If $\Re z<0$, we deduce that
\begin{equation}
|g(z;\eta,\theta)|\le c(\dd)e^{s\Re z}\|\eta \|_{L^2(D)}\|\theta\|_{L^2(D)}.
\label{yl0}
\end{equation}
Instead, when $\Re z\geq 0$, we write
$$
|g(z;\eta,\theta)|\le c(\dd)e^{(s-\delta)\Re z/2} \int_{\mathbb{R}^d} \frac{(|\zeta|^{2})^{1-(\delta+s)/2}(e^{\Re z})^{(\delta+s)/2}}{e^{\Re z}+|\zeta|^2}|\zeta|^{\delta+s}|\mathcal F(\widetilde \eta)| | \mathcal F(\widetilde \theta)| \, d\zeta.
$$
Whence,  Young's inequality guarantees that 
\begin{equation}
|g(z;\eta,\theta)|\le c(\dd)e^{(s-\delta)\Re z/2}\|\eta\|_{\widetilde{H}^\delta(D)} \|\theta\|_{\widetilde{H}^s(D)}.
\label{yg0}
\end{equation}

Gathering the above two estimates \eqref{yl0} and \eqref{yg0} gives
\begin{equation}
\int_{-\dd}^{\dd} |g(y+iw;\eta,\theta)|\, dw\leq 2\dd c(\dd)
\left\{ \begin{aligned}
\|\eta \|_{L^2(D)}\|\theta\|_{L^2(D)}, & \qquad y<0,\\
\|\eta\|_{\widetilde{H}^\delta(D)} \|\theta\|_{\widetilde{H}^s(D)}, &\qquad y\ge 0,
\end{aligned} \right.
\label{b-bound}
\end{equation}
and $N(B)$ in \eqref{inf} satisfies
\begin{equation}
N(B)\le c(\dd)(\frac{4}{\delta-s}\|\eta\|_{\widetilde{H}^\delta(D)} \|\theta\|_{\widetilde{H}^s(D)}
+\frac{2}{s}\|\eta \|_{L^2(D)}\|\theta\|_{L^2(D)}).\label{c-bound}
\end{equation}
Estimates \eqref{b-bound} and \eqref{c-bound} prove (b) and (c) respectively.

Having established (a), (b), and (c), we can use the sinc quadrature estimate \eqref{inf}.
In addition, from \eqref{yl0} and \eqref{yg0} we also deduce that
\begin{equation}\label{tail}
\begin{aligned}
k\sum_{j\le -\Nminus-1}^{-\infty} |g(kj;\eta,\theta)| &\le
\frac{c(\dd)}{s}e^{-s \Nminus k}\|\eta \|_{L^2(D)}\|\theta\|_{L^2(D)} \qquad\text{and }
	\\ k\sum_{j\ge\Nplus+1}^\infty |g(kj;\eta,\theta)| &\le
	\frac{2c(\dd)}{\delta-s}e^{(s-\delta)\Nplus k/2}\|\eta\|_{\widetilde{H}^\delta(D)} \|\theta\|_{\widetilde{H}^s(D)} .
	\end{aligned}
	\end{equation}
Combining \eqref{inf} with \eqref{c-bound} and \eqref{tail} shows \eqref{quadbound} and completes the proof.
\end{proof}

\begin{remark}[Choice of $N^-$ and $N^+$]\label{r:balanced} Balancing the three exponentials in \eqref{quadbound} leads to the following choice
$$2\pi\dd/k\approx (\delta-s)\Nplus k/2 \approx s\Nminus k .$$
Hence, for given the quadrature spacing $k>0$, we set
\begin{equation}\label{e:choiceN}
\Nplus := \bigg\lceil\frac{4\pi\dd}{k^2(\delta-s)}\bigg\rceil\qquad\hbox{and}\qquad \Nminus:=\bigg\lceil\frac{2\pi\dd}{sk^2}\bigg\rceil.
\end{equation}
With this choice,   \eqref{quadbound} becomes
\begin{equation}\label{e:quad_cons}
|a(\eta,\theta)-a^k(\eta,\theta)| \leq \gamma(k) \| \eta \|_{\widetilde{H}^\delta(D)} \| \theta \|_{\widetilde H^s(D)}
\end{equation}
where
\begin{equation}\label{e:gammak}
\gamma(k) := C\left(\frac{1}{\delta-s},\frac{1}{s},\dd\right)e^{-2\pi\dd/k}.
\end{equation}
\end{remark}
   

\section{Truncated Domain Approximations}\label{s:truncated}
To develop further approximation to problem \eqref{mainWP} based on the sinc quadrature approximation \eqref{ik}, we  replace \eqref{vt}
with problems on bounded domains.

\subsection{Approximation on Bounded Domains} \label{s:truncated_setting}

Let $\Omega$ be a convex bounded domain containing $D$ and the origin. 
Without loss of generality, we assume that the diameter of $\Omega$ is 1. 
This auxiliary domain is used to generate suitable truncation domains to approximate the solution of \eqref{vt}.
We introduce a domain parameter $M>0$ and define the dilated domains
\begin{equation}\label{e:omega_Mt}
\Omega^M(t):= \left\lbrace
\begin{array}{ll}
\left\lbrace y = (1+t(1+M))x \ : \ x \in \Omega \right\rbrace, &\qquad t\geq 1,\\
\left\lbrace y = (2+M)x \ : \ x \in \Omega \right\rbrace, &\qquad t < 1.
\end{array}\right. 
\end{equation}

The approximation of $a^k(\cdot,\cdot)$ in \eqref{ik} reads
\begin{equation}
a^{k,M}(\eta,\theta):= \frac{c_s k}{2}\sum_{j=-\Nminus}^\Nplus e^{\beta y_j}(w^M(\widetilde\eta, t_j),\theta)_{ {D}},
\label{isM}
\end{equation}
with $t_j :=t(y_j)=e^{-y_j/2}$, according to \eqref{e:tj}, and
\begin{equation}\label{wMt}
w^{M}(t):=w^M(\widetilde \eta,t)=\widetilde\eta|_{\Omega^M(t)}+v^M(\widetilde\eta,t),
\end{equation}
where $v^M(t):=v^M(\widetilde\eta,t)$ solves
\begin{equation}
(v^M(t),\phi)_{\Omega^M(t)}+t^2 d_{\Omega^M(t)}(v^M(t),\phi)=-(\eta,\phi)_D,\Forall \phi \in
H^1_0(\Omega^M(t));
\label{vtM}
\end{equation}
compare with \eqref{vt}.
The domains $\Omega^M(t_j)$ are constructed for the truncation error to be exponentially decreasing as a function of $M$. This is the subject of next section. 

\subsection{Consistency}\label{s:consistency_truncated}

The main result of this section provides an  estimate for $a^k-a^{k,M}$.
It relies on decay properties of  $v(\widetilde \eta,t)$ satisfying \eqref{vt}.
In fact, Lemma 2.1 of \cite{auscher2002solution}  
guarantees the existence of universal constants $c$ and
$C$ such that 
\begin{equation}\label{ineq:kato_esti}
	t\|\nabla v(\widetilde \eta,t)\|_{L^2(B^M(t))}+\|v(\widetilde \eta,t)\|_{L^2(B^M(t))}\leq Ce^{-\max(1,t)cM/t }\|\eta\|_{L^2(D)},
\end{equation}
provided  $\eta \in L^2(D)$ and $v(t):= v(\widetilde \eta,t)$ is given in \eqref{vt}.
Here 
$$
B^M(t):= \{x\in \Omega^M(t) \ :\
\hbox{dist}(x,\partial\Omega^M(t))<t\}
$$
so that the minimal distance between points in $D\subset \Omega$ and $B^M(t)$ is greater than $M\max(1,t)$.
An illustration of the different domains is provided in Figure~\ref{f:domains}.

\begin{figure}[ht!]
\begin{center}
\scalebox{1.0}{
\includegraphics[width=0.5\textwidth]{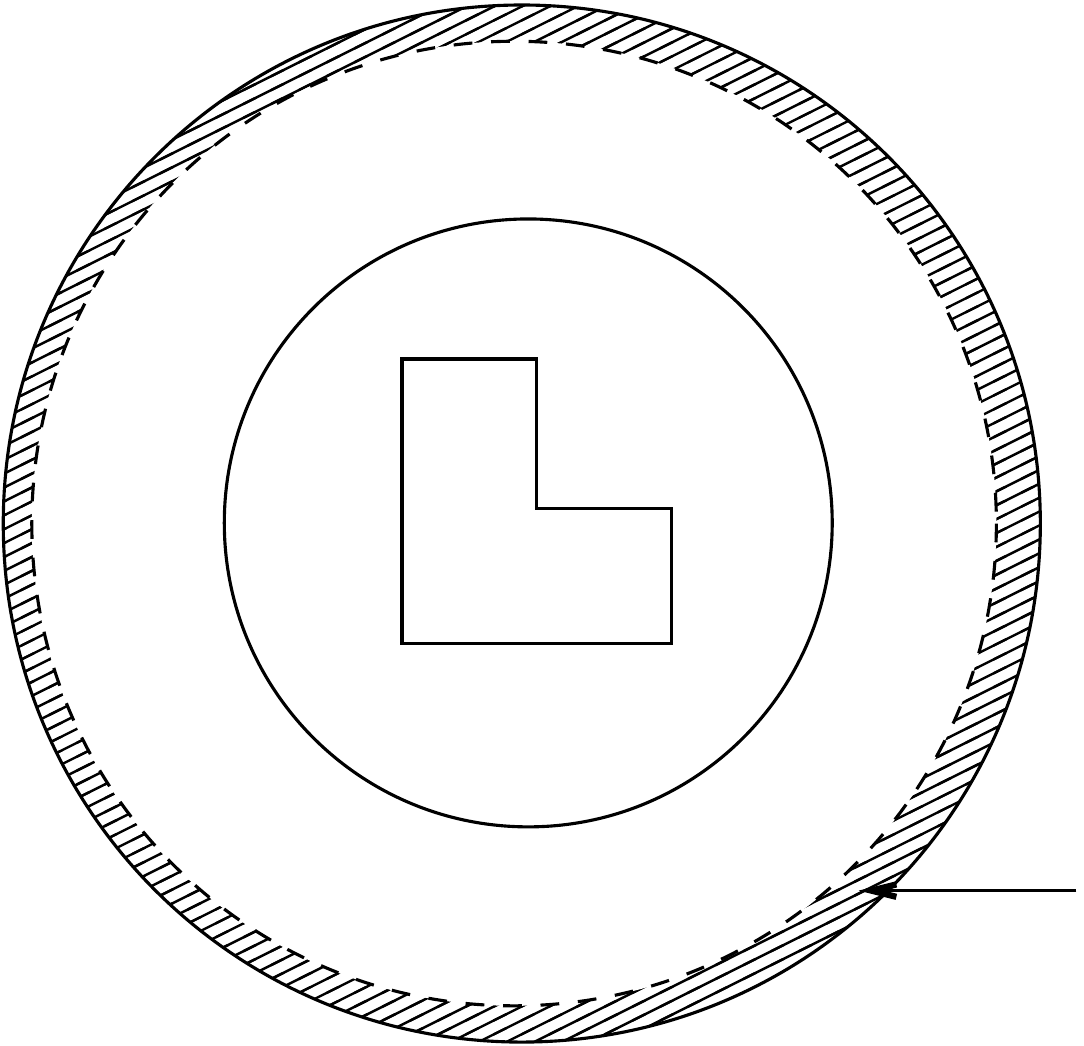}
\begin{picture}(0,0)
\put(0,25){$B^M(t)$}
\put(-100,75){$D$}
\put(-75,110){$\Omega$}
\put(-20,150){$\Omega^M(t)$}
\end{picture}
}
\caption{Illustration of the different domains in $\mathbb R^2$. The domain of interest $D$ is a L-shaped domain, $\Omega \subset \Omega^M(t)$ are interior of discs, and $B^M(t)$ is the filled portion of $\Omega^M(t)$.}\label{f:domains}
\end{center}
\end{figure}

\begin{lemma}[Truncation error] \label{l:deday}
Let $\eta \in L^2(D)$, $e(t):=v(\tilde \eta, t)-v^M(\tilde \eta,t)$ and $c$ be the constant appearing in \eqref{ineq:kato_esti}.
There is a positive  constant $C$ not depending
  on $M$ and t satisfying 
  \begin{equation}\label{domainbound2}
  \|e(t)\|_{L^2(\Omega^M(t))}\leq Ce^{-\max(1,t)cM/t }\|\eta\|_{L^2(D)} .
  \end{equation}

\end{lemma}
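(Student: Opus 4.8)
The plan is to compare the two functions $v(\widetilde\eta,t)$ (solving \eqref{vt} on all of $\RRD$) and $v^M(\widetilde\eta,t)$ (solving \eqref{vtM} on $\Omega^M(t)$) by testing the weak formulations against a suitable test function, taking care that the two equations live on different domains. The key device will be a smooth cutoff function $\chi$ supported in $\Omega^M(t)$, equal to $1$ on a neighborhood of $D$ and more precisely equal to $1$ on $\Omega^M(t)\setminus B^M(t)$, with $t\|\nabla\chi\|_{L^\infty}\le C$ uniformly (this is possible because the ``collar'' $B^M(t)$ has width $t$). Restricting the $\RRD$-equation \eqref{vt} to test functions supported in $\Omega^M(t)$ and subtracting from \eqref{vtM}, the source terms $-(\eta,\phi)_D$ cancel (both equal $-(\widetilde\eta,\phi)$ since $\phi$ vanishes outside $\Omega^M(t)$ and $\eta$ is supported in $D$), and one obtains, for all $\phi\in H^1_0(\Omega^M(t))$,
\begin{equation}
(e(t),\phi)_{\Omega^M(t)}+t^2 d_{\Omega^M(t)}(e(t),\phi) = (v(t)-v^M(t),\phi)_{\Omega^M(t)}+\cdots,
\label{e:plan-diff}
\end{equation}
but $e(t)=v(t)-v^M(t)$ is \emph{not} in $H^1_0(\Omega^M(t))$ because $v(t)$ does not vanish on $\partial\Omega^M(t)$. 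So instead I would write $e(t) = (v(t) - \chi v(t)) + (\chi v(t) - v^M(t))$, note that $\chi v(t)-v^M(t) \in H^1_0(\Omega^M(t))$ is an admissible test function, and use it to test \eqref{e:plan-diff}.

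The computation then goes as follows. Plugging $\phi = \chi v(t) - v^M(t)$ into the equation for $e(t)$ and using coercivity of the bilinear form $(\cdot,\cdot)+t^2 d(\cdot,\cdot)$ on $\Omega^M(t)$, the left side controls $\|\chi v(t)-v^M(t)\|_{L^2}^2 + t^2\|\nabla(\chi v(t)-v^M(t))\|_{L^2}^2$, while the right side produces commutator terms of the form $t^2\int_{\Omega^M(t)} v(t)\,\nabla\chi\cdot\nabla(\cdot)$ and lower-order terms, all of which are supported in $B^M(t)$ since $\nabla\chi$ and $1-\chi$ vanish off the collar. Each such term is estimated using the Auscher--Tchamitchian decay bound \eqref{ineq:kato_esti}, which gives exactly $t\|\nabla v(t)\|_{L^2(B^M(t))}+\|v(t)\|_{L^2(B^M(t))}\le Ce^{-\max(1,t)cM/t}\|\eta\|_{L^2(D)}$ and hence bounds the commutator by the same exponential (the factors of $t$ from $t^2 d(\cdot,\cdot)$ and from $t\|\nabla\chi\|_\infty\le C$ balance cleanly). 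Absorbing the coercive terms and applying the triangle inequality with $\|v(t)-\chi v(t)\|_{L^2(\Omega^M(t))} = \|(1-\chi)v(t)\|_{L^2(B^M(t))}\le \|v(t)\|_{L^2(B^M(t))}$ yields \eqref{domainbound2}.

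The main obstacle I anticipate is handling the geometry of the cutoff correctly and uniformly in $t$: one must verify that the collar $B^M(t)$ genuinely has width comparable to $t$ in all the regimes (so the gradient bound $t\|\nabla\chi\|_\infty\le C$ holds with a $t$-independent constant), and that the dilation structure of $\Omega^M(t)$ in \eqref{e:omega_Mt} — which behaves differently for $t\ge 1$ versus $t<1$ — does not spoil this. A secondary technical point is that $v(t)\in H^1(\RRD)$ only, so one should not assume extra regularity; the argument above only uses $v(t)\in H^1$ and the $L^2$/weighted-$H^1$ decay estimate on the collar, which is all that \eqref{ineq:kato_esti} provides, so this is harmless. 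Everything else — testing, coercivity, Cauchy--Schwarz, absorption — is routine.
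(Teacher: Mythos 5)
Your proposal is correct and follows essentially the same route as the paper: the paper likewise reduces to the homogeneous equation for $e(t)$ with boundary data $v(t)$, introduces a cutoff supported in the collar $B^M(t)$ with $t\|\nabla\chi\|_{L^\infty}\le C$, splits $e(t)$ into a piece supported in the collar (handled by the triangle inequality and \eqref{ineq:kato_esti}) plus an $H^1_0(\Omega^M(t))$ remainder, and estimates that remainder by testing its equation with itself and invoking \eqref{ineq:kato_esti} again. The only immaterial difference is notational — your cutoff is the complement of the paper's, so your $\chi v(t)-v^M(t)$ is exactly the paper's $\zeta(t)$.
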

\begin{proof} 
In this proof $C$ denotes a generic constant only depending on $\Omega$.
Note that $e(t)$ satisfies the relations
\begin{equation}
\begin{aligned}
(e(t),\phi)+t^2d_{\Omega^M(t)}(e(t),\phi)&=0, \quad \forall \phi\in H^1_0(\Omega^M(t)),\\
e(t)&=v(t),\quad\hbox{on }\partial\Omega^M(t).
\end{aligned}
\label{etform}
\end{equation}

Let $\chi(t) \geq 0$ be a bounded cut off function
satisfying $\chi(t)=1$ on $\partial \Omega^M(t)$ and $\chi(t)=0$ on
$\Omega^M(t)\setminus B^M(t)$. 
Without loss of generality,
we may assume that 
$\|\nabla \chi(t)\|_{L^\infty(\RRD)} \le C/t$.  
This implies
\begin{equation*}
\begin{split}  
\|\chi(t) v(t)\|_{L^2(B^M(t))}&+t\|\nabla(\chi(t)
v(t))\|_{L^2(B^M(t))} \\
&\leq C(\|v(t)\|_{L^2(B^M(t))}+t\|\nabla v(t)\|_{L^2(B^M(t))})\\
&\leq Ce^{-\max(1,t)cM/t }\|\eta\|_{L^2(D)}.
\end{split}
\end{equation*}
Here we use the decay estimate~\eqref{ineq:kato_esti} for last inequality above.
Now, setting $e(t):=\chi(t) v(t)+\zeta(t)$, we find that
$\zeta(t)\in H^1_0(\Omega^M(t))$
satisfies 
$$
(\zeta(t),\phi)_{\omt}+t^2d_{\Omega^M(t)}(\zeta(t),\phi)=-(\chi(t) v(t),\phi)_{\omt}-t^2d_{\Omega^M(t)}(\chi(t) v(t),\phi)
$$
for all $\phi\in H^1_0(\Omega^M(t))$. Taking $\phi=\zeta(t)$, we deduce that
$$
\begin{aligned} \|\zeta(t)\|^2_{L^2(\Omega^M(t))}+t^2\|\nabla\zeta(t)\|^2_{L^2(\Omega^M(t))}&\leq\|\chi(t) v(t)\|^2_{L^2(B^M(t))}+t^2\|\nabla(\chi(t) v(t))\|^2_{L^2(B^M(t))}\\
&
\leq Ce^{-2\max(1,t)cM/t }\|\eta\|^2_{L^2(D)}.\end{aligned}$$
Thus, combining the estimates for $\zeta(t)$ and $\chi(t)v(t)$ completes the proof.
\end{proof}
Lemma~\ref{l:deday} above is instrumental to derive exponentially decaying consistency error as $M\to \infty$. Indeed,
we have the following theorem.

\begin{theorem}[Truncation error]\label{t:truncate}
Let $c$ be the constant appearing in \eqref{ineq:kato_esti} and assume $M>2(s+1)/c$. Then,
there is a positive constant $C$ not depending
  on $M$ nor $k$ satisfying 
\begin{equation}
|a^k(\eta,\theta)-a^{k,M}(\eta,\theta)|\le
Ce^{-cM}\|\eta\|_{L^2(D)}\|\theta\|_{L^2(D)},\Forall \eta,\theta\in
L^2(D).\label{truncateb}
\end{equation}
\end{theorem}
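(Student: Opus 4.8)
The plan is to reduce the difference $a^k-a^{k,M}$, term by term in the sinc sum, to the error function $e(t_j)=v(\widetilde\eta,t_j)-v^M(\widetilde\eta,t_j)$ controlled in Lemma~\ref{l:deday}, and then to sum the resulting exponentially small bounds in $j$. First I would subtract \eqref{ik} from \eqref{isM}. On $D$ one has $\widetilde\eta=\widetilde\eta|_{\Omega^M(t_j)}=\eta$, so by the definition $w(\widetilde\eta,t_j)=\widetilde\eta+v(\widetilde\eta,t_j)$ and by \eqref{wMt} the contribution of $\eta$ cancels in the $j$-th term and
\[
a^k(\eta,\theta)-a^{k,M}(\eta,\theta)=\frac{c_s k}{2}\sum_{j=-\Nminus}^{\Nplus}e^{sy_j}\,(e(t_j),\theta)_D .
\]
Applying the Cauchy--Schwarz inequality on $D$, using $D\subset\Omega^M(t_j)$ and then Lemma~\ref{l:deday}, each inner product obeys $|(e(t_j),\theta)_D|\le\|e(t_j)\|_{L^2(\Omega^M(t_j))}\|\theta\|_{L^2(D)}\le Ce^{-\max(1,t_j)cM/t_j}\|\eta\|_{L^2(D)}\|\theta\|_{L^2(D)}$, so it remains to bound by $Ce^{-cM}$ the scalar quantity
\[
\Sigma:=\frac{c_s k}{2}\sum_{j=-\Nminus}^{\Nplus}e^{sy_j}e^{-\max(1,t_j)cM/t_j} .
\]

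To estimate $\Sigma$ I would split the sum according to the sign of $y_j=jk$, equivalently whether $t_j=e^{-jk/2}$ is $\ge1$ or $<1$. For $j\le0$ we have $t_j\ge1$, hence $\max(1,t_j)cM/t_j=cM$ and $e^{sy_j}=e^{sjk}\le1$; these terms contribute at most $\tfrac{c_s k}{2}e^{-cM}\sum_{m\ge0}e^{-smk}$, and since $e^{x}-1\ge x$ this is $\le\tfrac{c_s}{2s}e^{-cM}$, uniformly in $k$. For $j\ge0$ we have $t_j\le1$, so $\max(1,t_j)cM/t_j=cMe^{jk/2}$; using $e^{jk/2}\ge1+jk/2$ gives $e^{-cMe^{jk/2}}\le e^{-cM}e^{-cMjk/2}$, whence, with $e^{sy_j}=e^{sjk}$, these terms contribute at most
\[
\frac{c_s k}{2}e^{-cM}\sum_{j\ge0}e^{jk(s-cM/2)} .
\]
This is where the hypothesis $M>2(s+1)/c$ enters: it forces $s-cM/2<-1$, so this geometric series has ratio bounded away from $1$; bounding the tail $j\ge1$ by $\tfrac{c_s}{2}e^{-cM}\,k/(e^{k}-1)\le\tfrac{c_s}{2}e^{-cM}$ (again $e^{x}-1\ge x$) and the lone $j=0$ term by $\tfrac{c_s k}{2}e^{-cM}$ keeps the whole $j\ge0$ block $\le Ce^{-cM}$ for $k$ in any bounded range, which is the regime of interest (more generally one only picks up a harmless factor $\max(1,k)$ from the $j=0$ term). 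Adding the two blocks gives $\Sigma\le Ce^{-cM}$ and hence \eqref{truncateb}.

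I do not expect a real obstacle here. Once the identity expressing $a^k-a^{k,M}$ through $e(t_j)$ is written down, the theorem is a direct consequence of the decay estimate of Lemma~\ref{l:deday} together with the summation of two elementary geometric series. The only point requiring a little care is the bookkeeping of the sinc weights $e^{sy_j}$ against the $t_j$-dependent decay rate $\max(1,t_j)cM/t_j$; the threshold $M>2(s+1)/c$ is precisely what makes both tails summable to $O(e^{-cM})$ with a constant independent of $M$ and $k$.
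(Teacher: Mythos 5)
Your proposal is correct and follows essentially the same route as the paper's proof: reduce each quadrature term to $(v(t_j)-v^M(t_j),\theta)_D$, invoke the decay estimate of Lemma~\ref{l:deday}, split the sum at $t_j=1$, and bound the two resulting geometric series, with the hypothesis $M>2(s+1)/c$ ensuring the $t_j<1$ block sums to $O(e^{-cM})$ uniformly in $k$. Your explicit remark about the $j=0$ term and bounded $k$ is a minor bookkeeping point the paper glosses over in the same way, so there is no substantive difference.
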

\begin{proof}
In this proof $C$ denotes a generic constant only depending on $\Omega$. Let $\eta,\theta$ be in 
$L^2(D)$. It suffices to bound
$$
\begin{aligned}
E&:= \left| \frac{c_s k}{2}\sum_{j=-\Nminus}^\Nplus e^{s y_j}(w(t_j)-w^M(t_j),\theta)_D\right|\\
&\le C\left(k\sum_{j=-\Nminus}^{-1} e^{s y_j}|(v(t_j)-v^M(t_j),\theta)_D|+k\sum_{j=0}^{\Nplus} e^{s y_j}|(v(t_j)-v^M(t_j),\theta)_D|\right)\\
&=: E_1+E_2 
\end{aligned}
$$
with $v(t)=v(\widetilde\eta,t)$ defined by \eqref{vt} and $v^M(t)=v^M(\widetilde\eta,t)$ defined by \eqref{vtM}.
We estimate $E_1$ and $E_2$ separately, starting with $E_1$.

From the definition $t_j=e^{-y_j/2}$, we deduce that when $j<0$,  $t_j>1$ so that \eqref{domainbound2} gives
$$
\begin{aligned}
E_1&\leq Cke^{-cM}\sum_{j=-\Nminus}^{-1}e^{s y_j}\|\eta\|_{L^2(D)}\|\theta\|_{L^2(D)} \\
&\leq Ce^{-cM}\frac{ke^{-s k}}{1-e^{-s k}}\|\eta\|_{L^2(D)}\|\theta\|_{L^2(D)} \leq Ce^{-cM}\|\eta\|_{L^2(D)}\|\theta\|_{L^2(D)}  .
\end{aligned}
$$
Similarly, for $j\geq 0$, i.e. $t_j<1$, using \eqref{domainbound2} again,  we have
$$
\begin{aligned}
E_2&\leq Ck\sum_{j=0}^{\Nplus}e^{s y_j}e^{-{c M}/t_j}\|\eta\|_{L^2(D)}\|\theta\|_{L^2(D)} \\
&\leq Ck\sum_{j=0}^{\Nplus}e^{s y_j}e^{-cM(1+y_j/2)}\|\eta\|_{L^2(D)}\|\theta\|_{L^2(D)}\\
&= Cke^{-cM}\sum_{j=0}^{\Nplus}e^{(s-cM/2) y_j}\|\eta\|_{L^2(D)}\|\theta\|_{L^2(D)}\\
&\leq Ce^{-cM}\frac{k}{1-\exp(k(s-cM/2))}\|\eta\|_{L^2(D)}\|\theta\|_{L^2(D)}\\
& \leq \frac{Ce^{-cM}}{cM/2-s}\|\eta\|_{L^2(D)}\|\theta\|_{L^2(D)}\le C e^{-cM}\|\eta\|_{L^2(D)}\|\theta\|_{L^2(D)},
\end{aligned}
$$
where we have also used the property $cM/2-s>1$ guaranteed by the assumption $M> 2(s+1)/c$.
\end{proof}

\subsection{Uniform Norm Equivalence on Convex Domains}

Since the domains $\Omega^M(t)$ are convex, we know that the norms in $\dH r(\Omega^M(t))$ are equivalent to those in $H^r(\Omega) \cap H^1_0(\Omega^M(t))$ for $r \in [1,2]$, see e.g. \cite{BP13}.
However, as we mentioned in Remark~\ref{r:beta-equiv}, the equivalence 
constants depend a-priori on $\Omega^M(t)$ and
therefore on $M$ and $t$. We show in this section that they can be bounded
uniformly independently of both parameters.

To simplify the notation introduced in Section~\ref{s:dotted_space}. 
We shall denote  $T_\OMt$ by  $T_t$,
$L_{\omt}$ by $L_t$  and $\dH s(\omt)$ by $\dH s$.
We recall that $\Omega^M(t)$ is a dilatation of the convex and  bounded domain $\Omega$ containing the origin, see \eqref{e:omega_Mt}. We then have the following lemma.

\def\vhmt{{{\mathbb V}_h^M(t)}}
\begin{lemma}[Ellipitic Regularity on Convex Domains] \label{h2omt} Let  $f\in L^2(\omt)$. Then  $\theta:=T_t f$ is in
  $H^2(\Omega^M(t))\cap H^1_0(\Omega^M(t))$
and satisfies 
\begin{equation}
\|\theta \|_{H^2(\omt)}\le C\|f\|_{L^2(\omt)},
\label{uh2}
\end{equation}
where $C$ is a constant independent of $t$ and $M$.
\end{lemma}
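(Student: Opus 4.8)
The plan is to reduce everything to a dilation-invariance statement about elliptic regularity on the fixed convex domain $\Omega$, combined with a scaling analysis that tracks how the $H^2$-elliptic-regularity constant and the zeroth-order term behave under the dilation. I would first record that $\Omega^M(t)$ is obtained from $\Omega$ by a homothety $y = \rho x$ with dilation factor $\rho = \rho(t,M) \ge 1$, where $\rho = 1 + t(1+M)$ for $t \ge 1$ and $\rho = 2+M$ for $t<1$; the key point is $\rho \ge 1$ in all cases. Since $\Omega$ is convex and bounded, the classical elliptic regularity estimate on $\Omega$ (e.g.\ Grisvard, or \cite{BP13}) gives a constant $C_\Omega$, depending only on $\Omega$, such that any $u \in H^1_0(\Omega)$ with $-\Delta u + u =: F \in L^2(\Omega)$ satisfies $\|u\|_{H^2(\Omega)} \le C_\Omega \|F\|_{L^2(\Omega)}$, and moreover $\Omega$ being convex guarantees $u \in H^2(\Omega)$ in the first place.

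Next I would set up the change of variables carefully. Given $f \in L^2(\Omega^M(t))$, let $\theta = T_t f$, i.e.\ $\theta \in H^1_0(\Omega^M(t))$ solves $(\theta,\phi)_{\Omega^M(t)} + d_{\Omega^M(t)}(\theta,\phi) = (f,\phi)_{\Omega^M(t)}$ for all $\phi \in H^1_0(\Omega^M(t))$; equivalently $-\Delta \theta + \theta = f$ in $\Omega^M(t)$ with zero boundary data. Define the pulled-back functions $\widehat\theta(x) := \theta(\rho x)$ and $\widehat f(x) := f(\rho x)$ on $\Omega$. A direct computation shows $-\Delta_x \widehat\theta + \rho^2 \widehat\theta = \rho^2 \widehat f$ on $\Omega$, i.e.\ $\widehat\theta$ solves the shifted problem $-\Delta \widehat\theta + \rho^2\widehat\theta = \rho^2 \widehat f$ on the fixed domain $\Omega$ with homogeneous Dirichlet data. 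The slightly delicate point is that the zeroth-order coefficient is now $\rho^2$ rather than $1$, which is \emph{large}; so I cannot literally invoke the $\Omega$-regularity estimate for $-\Delta + I$. Instead, I would prove the scale-robust bound $\|\widehat\theta\|_{H^2(\Omega)} + \rho^2 \|\widehat\theta\|_{L^2(\Omega)} + \rho\||\nabla\widehat\theta|\|_{L^2(\Omega)} \le C_\Omega \rho^2 \|\widehat f\|_{L^2(\Omega)}$, where $C_\Omega$ depends only on $\Omega$ (not on $\rho$). This is the standard ``elliptic regularity uniform in the lower-order shift'' estimate on a convex domain: testing with $\widehat\theta$ gives the $L^2$ and $H^1$ bounds, and then, writing $\Delta\widehat\theta = \rho^2(\widehat\theta - \widehat f) =: G$ with $\|G\|_{L^2(\Omega)} \le \rho^2(\|\widehat\theta\|_{L^2(\Omega)} + \|\widehat f\|_{L^2(\Omega)}) \le 2\rho^2\|\widehat f\|_{L^2(\Omega)}$, the convex-domain $H^2$ bound for the pure Laplacian with Dirichlet data gives $\|\widehat\theta\|_{H^2(\Omega)} \le C_\Omega(\|G\|_{L^2(\Omega)} + \|\widehat\theta\|_{H^1(\Omega)})$, and the lower-order terms are already controlled.

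Finally I would push the estimate back to $\Omega^M(t)$ by undoing the change of variables and tracking the powers of $\rho$. One has $\|\theta\|_{L^2(\Omega^M(t))}^2 = \rho^d \|\widehat\theta\|_{L^2(\Omega)}^2$, $\||\nabla\theta|\|_{L^2(\Omega^M(t))}^2 = \rho^{d-2}\||\nabla\widehat\theta|\|_{L^2(\Omega)}^2$, $|\theta|_{H^2(\Omega^M(t))}^2 = \rho^{d-4}|\widehat\theta|_{H^2(\Omega)}^2$, and likewise $\|f\|_{L^2(\Omega^M(t))}^2 = \rho^d\|\widehat f\|_{L^2(\Omega)}^2$. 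Substituting the scale-robust bound above, the factor $\rho^2$ in front of $\|\widehat f\|$ is exactly cancelled by the $\rho^{-2}$ (or better) weights on the $\nabla$ and second-derivative pieces, while the pure $L^2$ piece of $\theta$ comes with no $\rho$ gain but is bounded by $\rho^{-2}\cdot\rho^2 = 1$ times $\|f\|$ anyway; assembling the three contributions yields $\|\theta\|_{H^2(\Omega^M(t))} \le C\|f\|_{L^2(\Omega^M(t))}$ with $C = C(\Omega)$ independent of $t$ and $M$, which is \eqref{uh2}. The main obstacle is the bookkeeping in this last step: one must be careful that each seminorm of $\theta$ on $\Omega^M(t)$ is matched with the correctly-weighted term in the scale-robust estimate on $\Omega$ so that all positive powers of $\rho$ cancel — the worst term is the full $H^2$-seminorm, which carries $\rho^{(d-4)/2}$ and needs the full $\rho^2$ from the right-hand side, so there is no slack there, and one should double-check $\rho\ge 1$ is genuinely used to absorb the (harmless) lower-order powers.
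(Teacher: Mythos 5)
Your proposal is correct and follows essentially the same route as the paper: convex-domain $H^2$ regularity on the reference domain $\Omega$, transferred to $\Omega^M(t)$ by the dilation, combined with the energy estimate obtained by testing the equation with $\theta$. The only difference is bookkeeping — the paper observes that the seminorm bound $|\hat\theta|_{H^2(\Omega)}\le C\|\Delta \hat\theta\|_{L^2(\Omega)}$ is exactly dilation-invariant, so it applies it directly on $\Omega^M(t)$ with $\Delta\theta=\theta-f$ and the bound $\|\theta\|_{H^1(\Omega^M(t))}\le\|f\|_{L^2(\Omega^M(t))}$, thereby avoiding your explicit tracking of powers of $\rho$ in the shifted problem $-\Delta\hat\theta+\rho^2\hat\theta=\rho^2\hat f$, which your cancellation check confirms in any case.
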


\begin{proof} It is well known that the convexity of $\Omega$ and hence
  that of $\omt$ implies that the unique solution $\theta$ of
  \eqref{varscale} with $\omega$ replaced by $\omt$  
is in $H^2(\Omega^M(t))\cap
  H^1_0(\Omega^M(t))$.  
Therefore,  the crucial point is to show that the constant in \eqref{uh2} does not depend on $M$ or $t$. 
To see this, the $H^2$ elliptic regularity on convex domains implies that  for $\hat \theta \in H^1_0(\Omega)$ with $\Delta \hat \theta \in L^2(\Omega)$ then $\hat \theta \in H^2(\Omega)$ and there is a constant $C$ only depending on $\Omega$ such that
\begin{equation} |\hat \theta|_{H^2(\Omega)} \le C \|\Delta \hat \theta\|_{L^2(\Omega)}.
\label{h2sreg}
\end{equation} 
Here $|\cdot|_{H^2(\Omega)}$ denotes the $H^2(\Omega)$ seminorm.
Let $\gamma$ be such that $\omt=\{\gamma x,\
  x\in \Omega\} $ (see \eqref{e:omega_Mt}) and $\hat \theta(\hat x) = \theta(\gamma \hat x)$ for $\hat x\in
  \Omega$. 
Once scaled back to $\Omega^M(t)$, estimate \eqref{h2sreg} gives 
\begin{equation}
| \theta |_{H^2(\omt)} \le C\|\Delta \theta \|_{L^2(\omt)} =
C\|f-\theta\|_{L^2(\omt)}.\label{h2sreg1}
\end{equation}
Now \eqref{varscale} immediately implies that 
$\|\theta \|_{H^1(\omt))}\le \|f\|_{L^2(\omt)}$ and \eqref{uh2} follows by
the triangle inequality and obvious manipulations.
\end{proof} 

\begin{remark}[Intermediate Spaces]\label{r:h2omt} Lemma~\ref{h2omt} implies that
  $D(L_t)=\dH 2=H^2(\omt)\cap H^1_0(\omt)$ with norm equivalence constants independent of $M$ and $t$.   As 
$D(L_t^{1/2}) = \dH 1 =H^1_0(\omt)$, for $s \in [1,2]$
$$\dH s = (H^1_0(\omt),H^2(\omt)\cap H^1_0(\omt))_{s-1,2} {=H^s(\omt)\cap H^1_0(\omt)}$$
with with norm equivalence constants independent of $M$ and $t$.
\end{remark}

\begin{lemma}[Norm Equivalence]  \label{hdbound}  For $\beta\in [1,3/2)$, let $\theta$ be in $\dH \beta$
and $\widetilde \theta$ denote its extension by zero outside of $\OMt$.   Then
$\widetilde \theta$ is in $H^\beta(\RRD)$ and 
$$\|\theta \|_{\dH \beta } \le C \|\widetilde
\theta \|_{H^\beta(\RRD)}$$
with $C$ not depending on $t$ or $M$.
\end{lemma}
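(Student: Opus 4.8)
The plan is to reduce the claim to the fixed domain $\Omega$ by scaling, exploiting that $\Omega^M(t)$ is a dilation of $\Omega$. Let $\gamma\ge 1$ be the dilation factor, so $\OMt=\{\gamma\hat x:\hat x\in\Omega\}$, and note $\gamma$ can be arbitrarily large as $M,t$ vary, so the whole point is to track how the norms scale with $\gamma$. For $\theta$ on $\OMt$ define $\hat\theta(\hat x):=\theta(\gamma\hat x)$ on $\Omega$. By Remark~\ref{r:h2omt} (equivalently by the definition of the dotted norms and the spectral scaling of $-\Delta_t$), the $\dH\beta(\OMt)$ norm scales in a computable way: for $\beta\in[1,2]$, using $\dH1=H^1_0$ and $\dH2=H^2\cap H^1_0$ together with interpolation, one gets a bound of the form $\|\theta\|_{\dH\beta(\OMt)}\le C\gamma^{d/2-\beta}\|\hat\theta\|_{\dH\beta(\Omega)}$, where the powers of $\gamma$ come from the Jacobian $\gamma^d$ in the $L^2$ change of variables and from the $\gamma^{-1}$ per derivative. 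Here $C$ depends only on $\Omega$, by the uniform equivalence established in Lemma~\ref{h2omt} and Remark~\ref{r:h2omt}.

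Next I would handle the extension by zero on the fixed domain. For $\beta\in[1,3/2)$ one has $\dH\beta(\Omega)=H^\beta(\Omega)\cap H^1_0(\Omega)=\tH^\beta(\Omega)$ (Remark~\ref{r:beta-equiv}), and the extension by zero of a function in $\tH^\beta(\Omega)$ lies in $H^\beta(\RRD)$ with $\|\widehat{\tilde\theta}\,\|_{H^\beta(\RRD)}\le C_\Omega\|\hat\theta\|_{\tH^\beta(\Omega)}$; this is exactly the content of the definition of $\tH^\beta$ and holds because $\beta<3/2$ (so no compatibility condition beyond vanishing trace is needed). Combining, $\|\widehat{\tilde\theta}\,\|_{H^\beta(\RRD)}\le C_\Omega\|\hat\theta\|_{\dH\beta(\Omega)}$.

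Finally I would scale back to $\RRD$. The extension of $\hat\theta$ by zero and the extension $\widetilde\theta$ of $\theta$ by zero are related by the same dilation, $\widetilde\theta(x)=\widehat{\tilde\theta}(x/\gamma)$, so the homogeneous Sobolev seminorms satisfy $|\widetilde\theta|_{\dot H^\beta(\RRD)}=\gamma^{d/2-\beta}|\widehat{\tilde\theta}|_{\dot H^\beta(\RRD)}$ and likewise for the $L^2$ and $\dot H^1$ pieces with their own powers of $\gamma$. Using $\gamma\ge1$ to absorb the lower-order terms, one obtains $\|\widetilde\theta\|_{H^\beta(\RRD)}\ge c\,\gamma^{d/2-\beta}\|\widehat{\tilde\theta}\,\|_{H^\beta(\RRD)}$ with $c$ absolute (after normalizing, say, $\mathrm{diam}\,\Omega=1$ as in the paper). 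Chaining the three displayed inequalities, the powers $\gamma^{d/2-\beta}$ cancel and we arrive at $\|\theta\|_{\dH\beta(\OMt)}\le C\|\widetilde\theta\|_{H^\beta(\RRD)}$ with $C=C(\Omega)$ independent of $M$ and $t$.

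The main obstacle is bookkeeping the scaling exponents consistently across three different norms ($\dH\beta$ on $\OMt$, $H^\beta$ on $\RRD$, and their counterparts on $\Omega$) and making sure the $\gamma$-powers genuinely match so that they cancel; the one subtle point is that $\dH\beta(\OMt)$ is an interpolation-defined norm whose scaling is not literally the naive Sobolev scaling, which is why Lemma~\ref{h2omt}/Remark~\ref{r:h2omt} (uniform equivalence with $H^\beta\cap H^1_0$) is essential — it lets us replace the abstract $\dH\beta$ norm by a concrete Sobolev norm whose dilation behavior is transparent. The restriction $\beta<3/2$ is needed precisely so that zero extension stays in $H^\beta$ without extra boundary compatibility.
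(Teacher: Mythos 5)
Your route is genuinely different from the paper's: you rescale $\OMt$ to the fixed reference domain $\Omega$ and try to cancel powers of the dilation factor $\gamma$, whereas the paper never rescales — it introduces the elliptic projection $R$ onto $H^1_0(\OMt)$, shows it is simultaneously bounded $H^1(\OMt)\to\dH{1}$ and $H^2(\OMt)\to\dH{2}$ with constants made uniform by Lemma~\ref{h2omt}, interpolates, uses $R\theta=\theta$, and then bounds $\|\theta\|_{(H^1(\OMt),H^2(\OMt))_{\beta-1,2}}\le\|\widetilde\theta\|_{H^\beta(\RRD)}$ by interpolating the restriction operator from $\RRD$ to $\OMt$ (which has norm $1$ on every $H^j$). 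The paper's argument is cleaner precisely because it never has to track dilation exponents.

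As written, your first displayed inequality is false, and this is a genuine gap rather than a bookkeeping nuisance. The bound $\|\theta\|_{\dH{\beta}(\OMt)}\le C\gamma^{d/2-\beta}\|\hat\theta\|_{\dH{\beta}(\Omega)}$ already fails at the endpoint $\beta=1$: since $\dH{1}$ carries the full $H^1$ norm (the operator is $I-\Delta$, not $-\Delta$), one has
\begin{equation*}
\|\theta\|_{\dH{1}(\OMt)}^2=\gamma^{d}\|\hat\theta\|_{L^2(\Omega)}^2+\gamma^{d-2}\|\nabla\hat\theta\|_{L^2(\Omega)}^2,
\end{equation*}
and the zeroth-order term carries $\gamma^{d}$, not $\gamma^{d-2}$; for a fixed $\hat\theta$ the claimed inequality forces $\gamma^{2}\le C^2$, which is impossible since $\gamma\sim M$ or $\sim tM$ is unbounded. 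The same failure occurs spectrally: $(1+\gamma^{-2}\hat\lambda_i)^\beta\le C\gamma^{-2\beta}(1+\hat\lambda_i)^\beta$ is false for small $\hat\lambda_i$ and large $\gamma$. You flagged that the lower-order terms need absorbing, but you only absorb them in the third step, where the inequality points the favorable way; in the first step it points the unfavorable way. The repair is to split the norm before rescaling: bound $\|\theta\|_{\dH{\beta}(\OMt)}^2\le 2^\beta\big(\|\theta\|_{L^2(\OMt)}^2+\sum_i\lambda_i^\beta|(\theta,\psi_i)_{\OMt}|^2\big)$, observe that the first piece equals $\|\widetilde\theta\|_{L^2(\RRD)}^2$ exactly (no $\gamma$ at all), and apply your dilation argument only to the homogeneous piece, which does scale as $\gamma^{d-2\beta}$ and matches the scaling of $|\widetilde\theta|_{\dot H^\beta(\RRD)}^2$ on the other end. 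With that term-by-term bookkeeping (and the fixed-domain identification $\dH{\beta}(\Omega)=\tH{\beta}(\Omega)$ from Remark~\ref{r:beta-equiv}, which is legitimate since $\Omega$ is fixed), your strategy goes through.
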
 

\begin{proof}  
Given $\theta \in H^1(\omt)$, we denote $R\theta $ to be the
  elliptic projection of $\theta$ into $H^1_0(\omt)$, i.e., $R\theta \in
  H^1_0(\omt)$ is the solution of 
$$\begin{aligned} (R\theta,\phi)_{\omt} &+d_\omt(R\theta,\phi)\\
&=(\theta,\phi)_{\omt}
+d_\omt(\theta,\phi)
,\Forall \phi \in
H^1_0(\omt).\end{aligned}
$$
It immediately follows that 
$$\|R\theta\|_{\dH1} = \|R\theta \|_{H^1(\omt)} \le  \|\theta\|_{H^1(\omt)}.
$$
Also, if $\theta \in H^2(\omt)$, Lemma~\ref{h2omt} (see also Remark~\ref{r:h2omt}) implies
$$\|R\theta\|_{\dH2} \le C\|R\theta\|_{H^2(\omt)} \le C \|\theta\|_{H^2(\omt)}$$
with $C$ not depending on $t$ or $M$. Hence, it follows by interpolation that
\begin{equation}
\|R \theta \|_{\dH \beta} \le C_\beta \|\theta\|_{(H^1(\omt),H^2(\omt))_{\beta-1,2}}.
\label{feq}
\end{equation}

Now when $\theta \in \dH \beta \subset H^1(\Omega^M(t))$, $R\theta = \theta$ so that in view of \eqref{feq}, it remains to show that
$$
\|\theta\|_{(H^1(\omt),H^2(\omt))_{\beta-1,2}} \leq C \| \tilde \theta \|_{H^\beta(\RRD)},
$$
for a constant $C$ independent of $M$ and $t$.
To see this, note that $\widetilde \theta$ is in
$H^1(\RRD)$ and the extension of $\nabla \theta$ by zero is in $H^{\beta-1}(\RRD)$ for $\beta < 3/2$.  
We refer to Theorem 1.4.4.4 of \cite{grisvard} for a proof when $d=1$ and the techniques used in Lemma 4.33 
of \cite{DD12} for the extension to the higher dimensional spaces.  
This implies that $\widetilde \theta$ belongs to $H^\beta(\RRD)$.  
Moreover, the restriction operator is
simultaneously bounded from $H^j(\RRD)$ to $H^j(\omt)$ for $j=1,2$.
Hence, by interpolation again, we have that
$$\|\theta \|_{(H^1(\omt),H^2(\omt))_{\beta-1,2}} \le \|\widetilde \theta\|_{H^\beta(\RRD)}.$$
This completes the proof of the lemma.
\end{proof}


\section{Finite Element Approximation}
\label{s:FEM}
In this section, we turn our attention to the finite element approximation of each subproblems \eqref{vtM} in $a^{k,M}(\cdot,\cdot)$. 
Throughout this section, we omit when no confusion is possible the subscript $j$ in $t_j$, i.e. we consider a generic $t$ keeping in mind that the subsequent statements only hold for $t=t_j$ with $j=-N^-,...,N^+$.
We also make the additional unrestrictive assumption that $\Omega$ used to define $\Omega^M(t)$ (see \eqref{e:omega_Mt}) is polygonal.
In turn, so are all the dilated domains $\Omega^M(t)$. 

\subsection{Finite Element Approximation of $a^{k,M}(\cdot,\cdot)$}
For any polygonal domain $\omega$, let  $\{\mathcal{T}_h(\omega) \}_{h>0}$ be a sequence of conforming subdivisions
 of $\omega$ made of simplices of maximal size diameter $h\le1$. We
 use the notation $\mathcal T^M_h(t):=\mathcal{T}_h(\Omega^M(t))$
 for $t=t_j$, $j=-N^-,...,N^+$, given by \eqref{e:tj}.   We assume that the subdivisions
 on $D$
are  shape-regular and quasi-uniform.  This
means that there exist universal constants  $\sigma,\rho >0$ such that
  \begin{equation}\label{a:shape-regular}
\sup_{h>0}\max_{T\in\mathcal T_h(D)}\left(\frac {\text{diam}(T)}{r(T)}\right)
 \leq \sigma,
\end{equation}
  \begin{equation}\label{a:quasi-uniform}
\sup_{h>0} \left(\frac{\max_{T \in \mathcal T_h(D)} \text{diam}(T)} {\min_{T \in \mathcal T_h(D)} \text{diam}(T)}\right) \leq \rho,
\end{equation}
where $\text{diam}(T)$ stands for the diameter of $T$ and $r(T)$ for the
radius of the largest ball contained in $T$.   We also assume that these
conditions hold as well for  $\mathcal T^M_h(t_j)$ with constants
$\sigma,\rho$ not depending on $j$.
We finally require that all the subdivisions match on $D$, i.e.  
\begin{equation}
\mathcal
T_h(D)\subset \mathcal T^M_h(t_j)
\label{a:mesh}
\end{equation}
for each $j$.
We discuss in Section~\ref{s:numerical} how to generate subdivisions meeting these requirements.

Define $\mathbb{V}_h(\omega )\subset H^1_0(\omega)$ to be the space of 
continuous piecewise linear finite element functions associated with
$\mathcal{T}_h(\omega)$ with $\omega =D$ or $\Omega^M(t)$.
Also, we use the short notation $\mathbb V_h^M(t):=\mathbb V_h(\Omega^M(t))$.

We are now in position to define the fully discrete/implementable problem. 
For $\eta_h$ and $\theta_h$ in $\vh {(D)}$, the finite element approximation of $a^{k,M}(\cdot,\cdot)$ given by \eqref{isM} is 
\begin{equation}
a_h^{k,M}(\eta_h,\theta_h):=\frac{c_s k}{2}\sum_{j=-\Nminus}^\Nplus e^{s y_j}(w_h^M(\widetilde\eta_h,t_j),\theta_h)_{ {D}}
\label{isMh}
\end{equation}
with 
\begin{equation}\label{e:wMh}
w^M_h(\widetilde \eta_h,t):=\widetilde \eta_h|_{\Omega^M(t)} +v^M_h(t)
\end{equation} 
and where $v^M_h(t) \in \mathbb V_h^M(t)$ solves 
\begin{equation}\label{e:vhM}
	(v^M_h(t),\phi_h)_{\Omega^M(t)}+t^2d_{\Omega^M(t)}(v^M_h(t),\phi_h)=-(\widetilde \eta_h,\phi_h)_{\Omega^M(t)},\quad\forall \phi_h\in \mathbb V^M_h(t).
\end{equation}

\begin{remark}[Assumption \eqref{a:mesh}]
Two critical properties follow from \eqref{a:mesh}.
On the one hand, our analysis below relies on the fact that the extension by zero $\widetilde
v_h$ of $v_h \in \mathbb V_h {(D)}$ belongs to all $\mathbb V_h^M(t)$. 
This property greatly simplifies the computation of
$(w_h^M(\widetilde\eta_h,t_j),\theta_h)_{ {D}}$ in \eqref{isMh}.
\end{remark}

The finite element approximation 
of the problem \eqref{mainP} is to find $u_h\in \mathbb{V}_h {(D)}$ so that
\begin{equation}
a^{k,M}_h(u_h,\theta_h)=(f,\theta_h)_{ {D}}\qquad\hbox{for all }
\theta_h\in\mathbb{V}_h {(D)} .
\label{finaldiscrete}
\end{equation}

 Analogous 
to Lemma~\ref{l:K}, we have the following representation using K-functional.  The proof of the lemma is similar to that of
Lemma~\ref{l:K} and is omitted.

\begin{lemma}[K-functional formulation on the discrete space]\label{l:K_discrete}
For $\eta_h \in \vh {(D)}$, there holds
$$
(w_h^M(\widetilde \eta_h,t), \eta_h)_{D} = (w_h^M(\widetilde \eta_h,t), \widetilde \eta_h)_{\Omega^M(t)} = \ K_h(\widetilde \eta_h,t),
$$
where
$$
K_h(\widetilde \eta_h,t):= \min_{\varphi_h \in \mathbb V_h^M(t)} \left( \| \widetilde \eta_h - \varphi_h \|_{L^2(\Omega^M(t))}^2 + t^2 d_{\omt}(\varphi_h,\varphi_h) \right).
$$
\end{lemma}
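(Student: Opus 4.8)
The plan is to mimic the proof of Lemma~\ref{l:K} but replace the continuous Fourier-analytic argument with the finite-dimensional spectral decomposition furnished by the eigenpairs of the discrete problem, exactly as indicated in Remark~\ref{r:bounded_K} for the continuous vanishing-Dirichlet case. First I would record the first identity $(w_h^M(\widetilde\eta_h,t),\eta_h)_D = (w_h^M(\widetilde\eta_h,t),\widetilde\eta_h)_{\Omega^M(t)}$; this is purely bookkeeping, since by \eqref{a:mesh} the extension by zero $\widetilde\eta_h$ lies in $\mathbb V_h^M(t)$ and agrees with $\eta_h$ on $D$ while vanishing on $\Omega^M(t)\setminus D$, so pairing against $\eta_h$ on $D$ and against $\widetilde\eta_h$ on $\Omega^M(t)$ give the same number; and $w_h^M(\widetilde\eta_h,t)=\widetilde\eta_h|_{\Omega^M(t)}+v_h^M(t)$ is supported in $\Omega^M(t)$.

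Next I would set up the discrete eigenproblem: let $\{\phi_i\}\subset \mathbb V_h^M(t)$ be the $L^2(\Omega^M(t))$-orthonormal basis of generalized eigenfunctions satisfying $d_{\omt}(\phi_i,\varphi_h)=\mu_i(\phi_i,\varphi_h)_{\Omega^M(t)}$ for all $\varphi_h\in\mathbb V_h^M(t)$, with $\mu_i\ge 0$. Expanding $\widetilde\eta_h=\sum_i c_i\phi_i$, the defining equation \eqref{e:vhM} for $v_h^M(t)$ is diagonalized: testing against $\phi_i$ gives $(1+t^2\mu_i)(v_h^M(t),\phi_i)_{\Omega^M(t)} = -c_i$, hence $v_h^M(t)=\sum_i \frac{-c_i}{1+t^2\mu_i}\phi_i$ and therefore $w_h^M(\widetilde\eta_h,t)=\sum_i \frac{t^2\mu_i}{1+t^2\mu_i}c_i\phi_i$. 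Consequently $(w_h^M(\widetilde\eta_h,t),\widetilde\eta_h)_{\Omega^M(t)} = \sum_i \frac{t^2\mu_i}{1+t^2\mu_i}|c_i|^2$.

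Then I would compute the discrete K-functional $K_h(\widetilde\eta_h,t)$ by the same pointwise minimization as in Lemma~\ref{l:K}, but coordinatewise in the eigenbasis: writing a competitor $\varphi_h=\sum_i z_i\phi_i\in\mathbb V_h^M(t)$, one has $\|\widetilde\eta_h-\varphi_h\|_{L^2(\Omega^M(t))}^2 + t^2 d_{\omt}(\varphi_h,\varphi_h) = \sum_i\big(|c_i-z_i|^2 + t^2\mu_i|z_i|^2\big)$, and each term is minimized at $z_i = c_i/(1+t^2\mu_i)$ with minimal value $\frac{t^2\mu_i}{1+t^2\mu_i}|c_i|^2$. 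Since the minimizer $z_i=c_i/(1+t^2\mu_i)$ is exactly the expansion of $v_h^M(t)+\widetilde\eta_h=w_h^M(\widetilde\eta_h,t)$, which indeed lies in $\mathbb V_h^M(t)$, the infimum over the finite-dimensional space is attained and equals $\sum_i \frac{t^2\mu_i}{1+t^2\mu_i}|c_i|^2$. Matching this with the expression from the previous paragraph yields $(w_h^M(\widetilde\eta_h,t),\widetilde\eta_h)_{\Omega^M(t)} = K_h(\widetilde\eta_h,t)$, completing the proof.

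There is essentially no hard part here — the argument is a finite-dimensional, fully diagonalized replica of Lemma~\ref{l:K}; the only points requiring a word of care are (i) invoking \eqref{a:mesh} to guarantee $\widetilde\eta_h\in\mathbb V_h^M(t)$ so that the first equality and the support statement are legitimate, and (ii) noting that because $\mathbb V_h^M(t)$ is finite-dimensional the K-functional infimum is a genuine minimum attained at $w_h^M(\widetilde\eta_h,t)$, so no density or closure argument is needed. Since the paper states the proof is omitted as being similar to that of Lemma~\ref{l:K}, I would at most include a one-line indication of the eigenbasis substitution rather than the full computation above.
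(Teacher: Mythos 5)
Your proof is correct and takes essentially the approach the paper intends: the authors omit the proof as "similar to that of Lemma~\ref{l:K}," and the eigenfunction-expansion adaptation you carry out is exactly the bounded-domain/discrete analogue already sketched in Remark~\ref{r:bounded_K}. One minor slip worth fixing: the minimizer of the K-functional, with coefficients $c_i/(1+t^2\mu_i)$, is $-v_h^M(t)=\widetilde\eta_h-w_h^M(\widetilde\eta_h,t)$ rather than $w_h^M(\widetilde\eta_h,t)$ itself (whose coefficients are $t^2\mu_i c_i/(1+t^2\mu_i)$); this mislabeling does not affect the computed minimum value, the attainment argument, or the conclusion.
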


We emphasize that for $v_h \in \mathbb V_h^M(t)$, its extension by zero $\tilde \eta_h$ belongs to $H^1(\RRD)$ and therefore
\begin{equation}\label{e:monotonK}
K_h(\tilde v_h,t)  \geq K(\tilde v_h,t).
\end{equation}
This property is critical in the proof of next theorem, which ensures the $\vh(D)$-ellipticity of the discrete bilinear for  $a_h^{k,M}$.
Before describing this next result, we recall that according to \eqref{e:quad_cons}
$$
|a(\eta_h,\theta_h)-a^k(\eta_h,\theta_h)|\le\gamma(k) \|\eta_h\|_{\widetilde H^\delta(D)}\|\theta_h\|_{\tHs(D)}
$$
with 
  $\delta$ between $s$ and $\min(2-s,3/2)$
(since $\vh(D)\subset \widetilde H^{3/2-\epsilon}(D)$ for any $\epsilon>0$) and $\gamma(k) \sim Ce^{-2\pi\dd/k}$.
Also, we note that from the quasi-uniform  \eqref{a:shape-regular} and shape-regular \eqref{a:quasi-uniform} assumptions, there exists a constant $c_I$ only depending on $\sigma$ and $\rho$ such that for $r^- \leq r^+ < 3/2$, there holds
\begin{equation}\label{e:inverse}
\| v_h \|_{\widetilde H^{r^+}(D)} \leq c_I h^{r^{-}-r^+}\| v_h \|_{\widetilde H^{r^-}(D)}, \qquad \forall v_h \in \mathbb V_h^M(t).
\end{equation}
\begin{theorem}[$\vh {(D)}$-ellipticity]\label{l:ellipticity}
Let $\delta$ in Theorem~\ref{t:quad_error} between $s$ and
  $\min(2-s,3/2)$, $k$ be the quadrature spacing and
 $c_I$ be the inverse constant in \eqref{e:inverse}. We assume that the quadrature parameters $N^-$ and $N^+$ are chosen according to \eqref{e:choiceN}.  Let $\gamma(k)$ be given by \eqref{e:gammak} and assume that $k$ is chosen sufficiently small so that
$$
c_I \gamma(k) h^{s-\delta} < 1.
$$
Then, there is a constant $c$ independent of $h,k$ and $M$ such that
$$
a_h^{k,M}(\eta_h,\eta_h) \geq c\| \eta_h \|_{\tHs(D)}^2, \Forall \eta_h \in \vh(D).
$$
\end{theorem}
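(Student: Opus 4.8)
The plan is to compare the discrete bilinear form $a_h^{k,M}$ with the exact form $a$ using the chain of identifications established above, and then absorb the three consistency errors. First I would use Lemma~\ref{l:K_discrete} to write, for $\eta_h \in \vh(D)$,
$$
a_h^{k,M}(\eta_h,\eta_h) = \frac{c_s k}{2} \sum_{j=-\Nminus}^{\Nplus} e^{s y_j} K_h(\widetilde\eta_h, t_j).
$$
By the monotonicity \eqref{e:monotonK}, $K_h(\widetilde\eta_h,t_j) \ge K(\widetilde\eta_h,t_j)$ for every $j$, so that
$$
a_h^{k,M}(\eta_h,\eta_h) \ge \frac{c_s k}{2} \sum_{j=-\Nminus}^{\Nplus} e^{s y_j} K(\widetilde\eta_h,t_j) = a^k(\eta_h,\eta_h),
$$
the last equality being the definition \eqref{ik} together with Lemma~\ref{l:K}. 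Thus the discrete form dominates the sinc-quadrature form on the diagonal, and the truncation step (b) has been bypassed entirely by the monotonicity of the $K$-functional under restriction to a subspace. This is the conceptual heart of the argument.

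Next I would relate $a^k(\eta_h,\eta_h)$ to $a(\eta_h,\eta_h)$. Since $\eta_h \in \vh(D) \subset \widetilde H^{\delta}(D)$ for $\delta$ between $s$ and $\min(2-s,3/2)$, the quadrature consistency bound \eqref{e:quad_cons} gives
$$
a^k(\eta_h,\eta_h) \ge a(\eta_h,\eta_h) - \gamma(k) \|\eta_h\|_{\widetilde H^\delta(D)}\|\eta_h\|_{\widetilde H^s(D)}.
$$
Now I apply the inverse inequality \eqref{e:inverse} with $r^- = s$, $r^+ = \delta$ to get $\|\eta_h\|_{\widetilde H^\delta(D)} \le c_I h^{s-\delta}\|\eta_h\|_{\widetilde H^s(D)}$, whence
$$
a^k(\eta_h,\eta_h) \ge a(\eta_h,\eta_h) - c_I\gamma(k) h^{s-\delta}\|\eta_h\|_{\widetilde H^s(D)}^2.
$$
Finally, invoking the coercivity identity \eqref{e:coercivity_a}, $a(\eta_h,\eta_h) = \|\eta_h\|_{\widetilde H^s(D)}^2$, and combining the displays,
$$
a_h^{k,M}(\eta_h,\eta_h) \ge \left(1 - c_I\gamma(k) h^{s-\delta}\right)\|\eta_h\|_{\widetilde H^s(D)}^2.
$$
Under the smallness hypothesis $c_I\gamma(k) h^{s-\delta} < 1$ we may take $c := 1 - c_I\gamma(k)h^{s-\delta} > 0$; in fact, since $\gamma(k) = C e^{-2\pi\dd/k}$ decays and $h^{s-\delta} \le h^{-(\delta-s)}$ with $\delta - s$ bounded, one can verify $c$ is bounded below independently of $h,k,M$ once $k$ is small enough (e.g.\ demanding $c_I\gamma(k)h^{s-\delta} \le 1/2$ so $c \ge 1/2$). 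The constant is visibly independent of $M$ because the truncation step never entered the estimate, and independent of $h$ once the stated smallness condition holds.

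The main obstacle is making the inverse-inequality step \eqref{e:inverse} legitimate: one needs $\eta_h$, a function on $D$ vanishing on $\partial D$, to belong to $\widetilde H^\delta(D)$ with $\delta < 3/2$ and to satisfy the inverse estimate with constant independent of $M$ and $t$ — this is exactly why the hypotheses restrict $\delta < 3/2$ and why the quasi-uniformity/shape-regularity assumptions \eqref{a:shape-regular}, \eqref{a:quasi-uniform} are imposed uniformly. A secondary subtlety is confirming that $\gamma(k)$ from Remark~\ref{r:balanced} does not itself hide $h$-dependence; it does not, since $\gamma(k)$ depends only on $\delta, s, \dd$ and $k$. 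All remaining steps are bookkeeping with the already-proven lemmas.
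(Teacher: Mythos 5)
Your proposal is correct and follows essentially the same route as the paper: the monotonicity $K_h(\widetilde\eta_h,t_j)\ge K(\widetilde\eta_h,t_j)$ to get $a_h^{k,M}(\eta_h,\eta_h)\ge a^k(\eta_h,\eta_h)$, then the quadrature consistency bound plus the inverse inequality to absorb the error into the coercivity identity \eqref{e:coercivity_a}. Your added remark that one should really demand something like $c_I\gamma(k)h^{s-\delta}\le 1/2$ to obtain a lower bound uniform in $h$ and $k$ is a fair sharpening of a point the paper leaves implicit.
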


\begin{proof}
Let $\eta_h \in \mathbb V_h^M(t)$ so that $\widetilde \eta_h\in H^1(\RRD)$.
We use the equivalence relations provided by Lemmas~\ref{l:K} and~\ref{l:K_discrete} together with the monotonicity property \eqref{e:monotonK} to write
$$
a_h^{k,M}(\eta_h,\eta_h) = \frac{c_s k}{2}\sum_{j=-\Nminus}^\Nplus e^{s y_j}K_h(\widetilde \eta_h,t_j) \geq 
\frac{c_s k}{2}\sum_{j=-\Nminus}^\Nplus e^{s y_j}K(\widetilde \eta_h,t_j) = a^k(\eta_h,\eta_h).
$$
The quadrature consistency bound \eqref{e:quad_cons} supplemented by an inverse inequality~\eqref{e:inverse} yields
$$
a_h^{k,M}(\eta_h,\eta_h)  \geq a(\eta_h,\eta_h)-\gamma(k) \|\eta_h\|_{\widetilde H^\delta(D)}\|\eta_h\|_{\tHs(D)}
\geq a(\eta_h,\eta_h)- c_I \gamma {(k)} h^{s-\delta}\|\eta_h\|_{\tHs(D)}^2.
$$
The desired result follows from assumption  $c_I \gamma(k)
h^{s-\delta}<1
$ and the coercivity of $a(\cdot,\cdot)$, see \eqref{e:coercivity_a}. 
\end{proof}

\subsection{Approximations on $\Omega^M(t)$}

The fully discrete scheme \eqref{finaldiscrete} requires approximations by the finite element methods on domains $\Omega^M(t)$. 
Standard finite element argumentations would lead to estimates with constants depending on $\Omega^M(t)$ and therefore $M$ and $t$.
In this section, we exhibit results where this is not the case due to the particular definition \eqref{e:omega_Mt} of $\Omega^M(t)$.

We can use interpolation to develop approximation results for functions
in the intermediate spaces with constants independent of $M$ and $t$.  The
Scott-Zhang interpolation construction  {\cite{SZ90}} gives rise to an approximation
operator $\pi^{sz}_h:H^1_0(\omt)\rightarrow \vh^M(t)$ satisfying 
$$ \|\eta- \pi^{sz}_h \eta \|_{H^1(\omt)} \le C \|
\eta\|_{H^1(\omt)},
$$
for all $\eta\in H^1_0(\omt)=\dH 1$ 
and 
$$\| \eta-\pi^{sz}_h \eta \|_{H^1(\omt)} \le C h \|
\eta\|_{H^2(\omt)},$$
for all $\eta\in H^2(\omt)\cap H^1_0(\omt)=\dH 2$.
The Scott-Zhang argument is local so the constants appearing above depend
on the shape regularity of the triangulations but not on $t$ or $M$.
Interpolating the above inequalities shows that for all $r\in [0,1]$
\begin{equation}
\inf_{\chi\in \vhmt} \|\eta-\chi\|_{H^1(\omt)} \le C h^r \|\eta\|_{\dH
  {1+r}}, \Forall \eta\in \dH {1+r}
\label{app}
\end{equation}
with $C$ not depending on $t$ or $M$.

Let $T_{t,h}$ denote the finite element approximation to $T_t$ given by \eqref{e:Tomega}, i.e., 
for $F\in \dH {-1}$, $T_{t,h}F:=w_h$ with $w_h\in \vh^M(t)$ being the unique
solution of
$$
 (w_h,\phi)_\omt +d_\omt (w_h,\phi)  = \langle F,\phi\rangle, \Forall \phi\in \vh^M(t).
$$
The approximation result \eqref{app}  and standard finite element analysis
techniques implies that for any $r\in [0,1]$,
\begin{equation}
\|T_tF - T_{t,h}F\|_{L^2(\omt)} \le C h^{1+r} \|T_tF\|_{\dH {1+r}}\le  C h^{1+r} \|F\|_{\dH {-1+r}},
\label{Tdifapp}
\end{equation}
where the last inequality follows from interpolation since $\|T_t F\|_{H^1(\omt)}\le \|F\|_{H^{-1}(\omt)}$ and \eqref{uh2} hold.

For $f\in L^2(\Omega^M(t))$, we define the operator 
\begin{equation}\label{e:Sop}
\tzer_tf:=\eta \in
H^1_0(\omt)
\end{equation} 
satisfying, 
$$d_\omt(\eta,\phi)= (f,\phi)_\omt,\Forall \phi\in H^1_0(\omt)$$
and let $\tzer_{t,h} f\in \vh^M(t)$ denote its finite element
approximation; compare with $T_t$ and $T_{h,t}$.  Although the Poincar\'e constant depends on the diameter of $\omt$, we still have the following lemma.

\begin{lemma} \label{tzer} There is a constant $C$ independent of $h$, $t$, or $M$
  satisfying
$$\|\tzer_t f-\tzer_{t,h} f\|_{L^2(\omt)} \le Ch^2 \|f\|_{L^2(\omt)}.$$
\end{lemma}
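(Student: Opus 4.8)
The plan is to run the classical Aubin--Nitsche duality argument for the Ritz projection associated with the Dirichlet form $d_\omt$, taking care that every ingredient used carries a constant that is invariant under dilation, so that the diameter of $\omt$ (which behaves like $1+t(1+M)$) never enters. The one structural obstacle is precisely this: the estimate \eqref{app} and the Poincar\'e inequality on $\omt$ both have constants that grow with $\mathrm{diam}(\omt)$, so they cannot be invoked in the form stated; the remedy is to work throughout with the homogeneous $H^1$-- and $H^2$--seminorms, whose scalings match, and to use the \emph{local} (seminorm) versions of the Scott--Zhang and elliptic-regularity bounds, whose constants depend only on $\Omega$ and on the shape-regularity parameters $\sigma,\rho$.

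First I would record the two scale-invariant facts. (i) The local Scott--Zhang estimate (\cite{SZ90}; cf.\ the discussion preceding \eqref{app}) gives $|\eta-\pi^{sz}_h\eta|_{H^1(\omt)}\le C h\,|\eta|_{H^2(\omt)}$ for all $\eta\in H^2(\omt)\cap H^1_0(\omt)$, with $C$ depending only on shape regularity, since the patchwise estimates being summed involve only seminorms. (ii) For $\eta\in H^1_0(\omt)$ with $-\Delta\eta=g\in L^2(\omt)$, the convexity of $\omt$ and the rescaling of \eqref{h2sreg} carried out in the proof of Lemma~\ref{h2omt} give $|\eta|_{H^2(\omt)}\le C\|g\|_{L^2(\omt)}$ with $C$ depending only on $\Omega$; note that here only the $H^2$-seminorm is controlled in a uniform way, since the full $H^2$ norm of $\tzer_t g$ genuinely blows up with $M$ because no zeroth-order term is present (this is exactly the difficulty the remark preceding the lemma alludes to).

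Then the argument proceeds in two steps. Write $\eta:=\tzer_t f$, $\eta_h:=\tzer_{t,h}f$, $e:=\eta-\eta_h\in H^1_0(\omt)$. Step 1 (energy estimate): from the definitions, $d_\omt(e,\chi)=0$ for all $\chi\in\vhmt$, hence $|e|_{H^1(\omt)}=\inf_{\chi\in\vhmt}|\eta-\chi|_{H^1(\omt)}\le|\eta-\pi^{sz}_h\eta|_{H^1(\omt)}$; combining with (i) and (ii) (applied with $g=f$) yields $|e|_{H^1(\omt)}\le C h\|f\|_{L^2(\omt)}$, with $C$ independent of $h,t,M$. Step 2 (duality): put $\psi:=\tzer_t e$ and $\psi_h:=\tzer_{t,h}e$; testing the defining equation of $\psi$ with $e$ and using the Galerkin orthogonality of $e$ against $\psi_h\in\vhmt$ gives $\|e\|_{L^2(\omt)}^2=d_\omt(\psi,e)=d_\omt(\psi-\psi_h,e)\le|\psi-\psi_h|_{H^1(\omt)}\,|e|_{H^1(\omt)}$, and applying Step 1 with $e$ in the role of $f$ bounds $|\psi-\psi_h|_{H^1(\omt)}\le C h\,|\psi|_{H^2(\omt)}\le C h\|e\|_{L^2(\omt)}$. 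Putting the two steps together gives $\|e\|_{L^2(\omt)}^2\le C h^2\|e\|_{L^2(\omt)}\|f\|_{L^2(\omt)}$, and dividing by $\|e\|_{L^2(\omt)}$ (the estimate being trivial when it vanishes) yields the claim with $C$ independent of $h$, $t$, and $M$. I expect the only delicate point to be the bookkeeping in (i) and (ii) that certifies dilation-invariance of the constants; the duality mechanics themselves are entirely standard.
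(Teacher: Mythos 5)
Your proposal is correct and follows essentially the same route as the paper: an energy estimate via Galerkin orthogonality combined with the dilation-invariant $H^2$-seminorm regularity on the convex domain $\omt$ (the paper's \eqref{h2sreg1}), followed by the standard Aubin--Nitsche duality step with $\tzer_t e$ as dual solution, all phrased in seminorms so the constants stay independent of $t$ and $M$. The only cosmetic difference is that you justify the best-approximation bound explicitly through the local Scott--Zhang seminorm estimates, whereas the paper simply invokes C\'ea's lemma with the same seminorm approximation property.
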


\begin{proof}
For $f\in L^2(\omt)$, set $e_h:=(\tzer_t  -\tzer_{t,h}) f$.
 The elliptic regularity estimate \eqref{h2sreg1} on convex domain and Cea's Lemma imply
\begin{equation*}
\begin{split}
|e_h |_{H^1(\omt)} &= \inf_{\chi_h\in \vhmt}
|\tzer_t f -\chi_h |_{H^1(\omt)}
\le Ch |\tzer _t  f|_{H^2(\omt)} \\
&\le Ch \|\Delta \tzer _t  f\|_{L^2(\omt)} = Ch \| f\|_{L^2(\omt)},
\end{split}
\end{equation*}
where $C$ is a constant independent of $h$, $t$ and $M$.
Galerkin orthogonality and the above estimate give
$$\begin{aligned} \|e_h\|_{L^2(\omt)} ^2&=d_\omt(e_h,\tzer_t e_h)=
d_\omt(e_h,(\tzer_t-\tzer_{t,h})e_h )\\
&\le |e_h|_{H^1(\omt)} |(\tzer_t-\tzer_{t,h})e_h|_{H^1(\omt)} \\& \le Ch
|e_h|_{H^1(\omt)}  \|e_h\|_{L^2(\omt)} .
\end{aligned}
$$
Combining the above two inequalities and obvious manipulations completes
the proof of the lemma.
\end{proof}

We shall also need norm equivalency on discrete scales.  Let
$(\vhmt,\|\cdot\|_{L^2(\omt)})$ and $(\vhmt,\|\cdot\|_{H^1(\omt)})$ denote
$\vhmt$ normed with the norms in $L^2(\omt)$ and $H^1(\omt)$, respectively.
We define $\|\cdot\|_{\dH r_h(\Omega^M(t))}$, or simply $\|\cdot\|_{\dH r_h}$, to be the norm 
in the interpolation space
$$
\big(\vhmt,\|\cdot\|_{L^2(\omt)}),(\vhmt,\|\cdot\|_{H^1(\omt)})\big)_{r,2}.
$$
For $r\in[0,1]$, as the  natural injection is a bounded map (with bound 1) from
$\vhmt$ into $L^2(\omt)$ and $H^1_0(\omt)$, respectively, 
$\|v_h\|_{\dH r} \le \|v_h\|_{\dH r_h}$, for all $v_h\in \vhmt$.   
For the other direction, one needs a projector into $\vhmt$ which is simultaneously
bounded on $L^2(\omt)$ and $H^1_0(\omt)$.  In the case of a globally quasi uniform
mesh, it was shown by Bramble and Xu \cite{BX91} that the $L^2(\omt)$ projector
$\pi_h$ satisfies this property.   Their argument is local, utilizing the
inverse inequality \eqref{e:inverse} and hence leads to constants depending on those 
 appearing in \eqref{a:shape-regular} and \eqref{a:quasi-uniform} 
 but not $t$, $h$, or $M$.  Interpolating
these results gives, for
$r\in [0,1]$,
\begin{equation}
c\|v_h\|_{\dH r_h} \le \|v_h \|_{\dH r} \le \|v_h\|_{\dH r_h} , \Forall
v_h\in \vhmt,
\label{dsplus}\end{equation}
where $c$ is a constant independent of $h$, $M$ and $t$.
The spaces for negative $r$ are defined by duality and the stability of the $L^2(\Omega^M(t))$-projection $\pi_h$ yields
\begin{equation}
c\|v_h\|_{\dH{-r}} \le \|v_h\|_{\dH {-r}_h} \le
\|v_h\|_{\dH {-r}}.\Forall v_h\in \vhmt.
\label{dsneg}
\end{equation}

We finally note that a discrete version of Lemma~\ref{Tshift} holds.
Its proof is essentially the same and is omitted for brevity. 

\begin{lemma} \label{Tshifth} Let $a$ be in $[0,2]$ and $b$ be in $[0,1]$
  with $a+b\le 2$.
Then for
  any $\mu\in (0,\infty)$,
$$\| (\mu I + T_{t,h})^{-1} \eta_h\|_{\dH{-b}_h } \le  \mu^{(a+b)/2-1}
\|\eta_h\|_{\dH{a}_h},\Forall \eta_h\in \vhmt.$$
\end{lemma}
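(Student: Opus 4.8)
The plan is to mimic the proof of Lemma~\ref{Tshift}, replacing the continuous operator $T_\omega$ by its finite element analogue $T_{t,h}$ and the eigenbasis $\{\psi_i\}$ by the $L^2(\omt)$-orthonormal eigenbasis of $T_{t,h}$ acting on the finite-dimensional space $\vhmt$. Concretely, let $\{\psi_i^h\}_{i=1}^{N_h}$ (with $N_h=\dim \vhmt$) denote the eigenfunctions of $T_{t,h}:\vhmt\to\vhmt$, which are $L^2(\omt)$-orthonormal, with positive eigenvalues $\mu_i^h$; set $\lambda_i^h=(\mu_i^h)^{-1}$. Since $T_{t,h}$ is self-adjoint and positive definite on $(\vhmt,(\cdot,\cdot)_\omt)$, its fractional powers $T_{t,h}^r$ are well defined via this spectral decomposition, and the discrete scale norms admit the representation
$$
\|v_h\|_{\dH r_h}^2 = \sum_{i=1}^{N_h} (\lambda_i^h)^r |(v_h,\psi_i^h)_\omt|^2, \qquad r\in[-1,2],
$$
which is the discrete counterpart of the eigenfunction characterizations in Section~\ref{s:dotted_space}; I would state this as the starting point (it follows from \eqref{dsplus}, \eqref{dsneg} and interpolation, exactly as in the continuous case, and the paper already asserts ``its proof is essentially the same'').

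Given $\eta_h\in\vhmt$, write $\eta_h=\sum_i c_i \psi_i^h$ with $c_i=(\eta_h,\psi_i^h)_\omt$. Then $T_{t,h}^{a/2}$ applied to the auxiliary vector works as before: setting $\theta_h := L_{t,h}^{a/2}\eta_h = T_{t,h}^{-a/2}\eta_h$ reduces the claim to showing
$$
\|(\mu I + T_{t,h})^{-1} T_{t,h}^{a/2}\theta_h\|_{\dH{-b}_h} \le \mu^{(a+b)/2-1}\|\theta_h\|_{L^2(\omt)}
$$
for all $\theta_h\in\vhmt$. Expanding $\theta_h=\sum_i d_i\psi_i^h$ and using the spectral representation,
$$
\|(\mu I + T_{t,h})^{-1} T_{t,h}^{a/2}\theta_h\|_{\dH{-b}_h}^2 = \sum_{i=1}^{N_h} \frac{(\lambda_i^h)^{-a-b}}{(\mu+(\lambda_i^h)^{-1})^2} |d_i|^2.
$$
The proof then closes by the same elementary Young-type inequality used in Lemma~\ref{Tshift}, namely $(\lambda_i^h)^{-(a+b)/2}\mu^{1-(a+b)/2}(\mu+(\lambda_i^h)^{-1})\le 1$, valid because $a+b\le 2$ so the exponents $(a+b)/2$ and $1-(a+b)/2$ are conjugate weights; this bounds each fraction by $\mu^{a+b-2}$ and hence the whole sum by $\mu^{a+b-2}\sum_i|d_i|^2 = \mu^{a+b-2}\|\theta_h\|_{L^2(\omt)}^2$, giving the claimed estimate after taking square roots.

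I do not anticipate a serious obstacle: the whole argument is purely spectral and finite-dimensional, so there are no subtleties about domains of unbounded operators or density. The one point that deserves a sentence of care is justifying the discrete scale-norm eigenfunction representation with constants independent of $h$, $t$, $M$ — but this is exactly what \eqref{dsplus} and \eqref{dsneg} provide, since those equivalences (uniform in the parameters, via the Bramble--Xu $L^2$-projection stability) are what let us pass between $\|\cdot\|_{\dH r_h}$ and the spectral sums. Since Lemma~\ref{Tshift} does not in fact use any such equivalence (it works directly with $\|L_\omega^{r/2}\cdot\|_{L^2}$), the cleanest route is to \emph{define} the discrete fractional norms spectrally as above and invoke \eqref{dsplus}--\eqref{dsneg} only implicitly through the already-established identification of $\|\cdot\|_{\dH r_h}$; then the proof is verbatim that of Lemma~\ref{Tshift} with $\omega,T_\omega,\psi_i,\lambda_i$ replaced by $\omt,T_{t,h},\psi_i^h,\lambda_i^h$, and the constant is $1$, independent of all parameters. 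Hence the statement as written (with no dependence of the constant on $h,t,M$) follows.
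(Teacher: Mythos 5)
Your proposal is correct and is exactly the argument the paper has in mind: the paper omits the proof of Lemma~\ref{Tshifth} precisely because it is the spectral/Young's-inequality argument of Lemma~\ref{Tshift} repeated verbatim with $T_{\omega},\psi_i,\lambda_i$ replaced by $T_{t,h}$ and its $L^2(\omt)$-orthonormal eigenpairs on $\vhmt$, yielding the constant $1$ uniformly in $h$, $t$, $M$. Your side remark is also on target: the only point needing a word of justification is that $\|\cdot\|_{\dH r_h}$ (defined by interpolation, and by duality for negative $r$) coincides with the discrete spectral norm $\|L_{t,h}^{r/2}\cdot\|_{L^2(\omt)}$, which follows from the same K-functional identity used in the continuous case (noting $\|v_h\|_{H^1(\omt)}=\|L_{t,h}^{1/2}v_h\|_{L^2(\omt)}$ on $\vhmt$), not from \eqref{dsplus}--\eqref{dsneg}.
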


\subsection{Consistency}

The next step is to estimate the consistency error between 
$a^{k,M}(\cdot,\cdot)$ and $a_{h}^{k,M}(\cdot,\cdot)$ on $\mathbb V_h(D)$.
Its decay depends on a parameter $\beta \in (s,3/2)$, which will be related later to the regularity of the solution $u$ to \eqref{mainP}.

\begin{theorem}[Finite Element Consistency]\label{t:discrete_consistency}
Let $\beta\in(s,3/2)$.
We assume that the quadrature parameters $N^-$ and $N^+$ are chosen according to \eqref{e:choiceN}. {There} exists a constant
$C$ independent  of $h$, $k$ and $M$ satisfying
\begin{equation}\label{h-estimate}\begin{aligned}
	|a^{k,M}(\eta_h,\theta_h)&-a^{k,M}_h(\eta_h,\theta_h)|  \\
	&\le
        C(1+\ln(h^{-1}))h^{\beta-s}\|\eta_h\|_{\widetilde
            H^\beta(D)}\|\theta_h\|_{\widetilde H^{s}(D)}
\end{aligned}
\end{equation}
for all $\eta_h, \theta_h\in \mathbb{V}_h {(D)}$.
\end{theorem}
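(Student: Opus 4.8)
The difference $a^{k,M}(\eta_h,\theta_h)-a^{k,M}_h(\eta_h,\theta_h)$ is a finite sum over $j$ of terms
$\frac{c_sk}{2}e^{sy_j}\big((w^M(\widetilde\eta_h,t_j)-w^M_h(\widetilde\eta_h,t_j),\theta_h)_D\big)$, and since
$w^M-w^M_h = v^M-v^M_h =: E^M(t_j)$ is the error in a shifted elliptic solve on $\Omega^M(t_j)$, the whole problem reduces to estimating $|(E^M(t),\widetilde\theta_h)_{\Omega^M(t)}|$ for a single $t=t_j$, with a constant independent of $M$ and $t$, and then summing a geometric-type series in $j$ as in the proof of Theorem~\ref{t:truncate}. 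First I would rewrite the continuous and discrete subproblems in the operator language of Section~\ref{s:dotted_space}: $v^M(t) = -(I+t^2 L_t)^{-1}\widetilde\eta_h = -t^{-2}(t^{-2}I + L_t)^{-1}\widetilde\eta_h$ and similarly $v^M_h(t) = -t^{-2}(t^{-2}I + L_{t,h})^{-1}\pi_h\widetilde\eta_h$ where $L_{t,h} = T_{t,h}^{-1}$ on $\vhmt$ (modulo the usual identifications); equivalently in terms of $T_t$, $v^M(t) = -(\mu I + T_t)^{-1}T_t\widetilde\eta_h$ with $\mu = t^2$, and likewise for the discrete version. The key estimate is then a resolvent-difference bound: for $\mu>0$,
$$\|(\mu I+T_{t,h})^{-1}\pi_h g - (\mu I+T_t)^{-1}g\|_{L^2(\omt)} \le C(1+|\ln h|)\,h^{\beta-\beta'}\,\mu^{(\beta'-2)/2}\|g\|_{\dot H^{\beta-2}},$$
or some variant calibrated so that, after pairing with $\widetilde\theta_h\in H^1(\RRD)$ and tracking powers of $t$, each summand is bounded by $C(1+|\ln h|)h^{\beta-s}$ times an exponentially decaying factor in $y_j$ times $\|\eta_h\|_{\widetilde H^\beta(D)}\|\theta_h\|_{\widetilde H^s(D)}$.

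To prove that resolvent-difference bound I would use the standard decomposition
$(\mu I+T_{t,h})^{-1}\pi_h - (\mu I+T_t)^{-1} = (\mu I+T_{t,h})^{-1}\pi_h\big(T_t - T_{t,h}\pi_h\big)(\mu I+T_t)^{-1} + (\mu I + T_{t,h})^{-1}(\pi_h - I)(\mu I + T_t)^{-1}$ (or the cleaner identity obtained by inserting $(\mu I+T_{t,h})(\mu I+T_{t,h})^{-1}$), reducing everything to: (i) the finite-element error bound \eqref{Tdifapp} for $T_t-T_{t,h}$, which is $O(h^{1+r})$ in the right dotted norms with $M,t$-independent constant thanks to the uniform $H^2$-regularity Lemma~\ref{h2omt}; (ii) the uniform discrete and continuous resolvent smoothing estimates of Lemmas~\ref{Tshift} and~\ref{Tshifth}, which convert powers of $\mu=t^2$ into the needed negative powers; (iii) the uniform norm equivalences \eqref{dsplus}--\eqref{dsneg} between continuous and discrete dotted scales, and the uniform norm equivalence between $\dot H^\beta(\omt)$-type norms on $\Omega^M(t)$ and the ambient $H^\beta(\RRD)$-norm of the zero extension (Lemma~\ref{hdbound}), which is exactly what lets us pass from $\|\widetilde\eta_h\|_{\dot H^\beta(\omt)}$ back to $\|\eta_h\|_{\widetilde H^\beta(D)}$ since $\widetilde\eta_h\in\vhmt$ is the extension by zero of $\eta_h\in\vh(D)$. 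The logarithmic factor $(1+\ln h^{-1})$ enters because $\beta$ can reach up to $3/2$ and the interpolation/duality argument at the endpoint regularity is non-sharp; it typically arises from summing $\|\eta_h\|_{\widetilde H^\beta} \lesssim \sum$ over dyadic frequency scales or from an inverse-inequality-based estimate of the defect $\pi_h - I$ measured in $\dot H^{-1+r}$ near $r=1$.

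After the single-$t$ bound $|(E^M(t),\widetilde\theta_h)_{\Omega^M(t)}| \le C(1+\ln h^{-1})h^{\beta-s}\,\rho(t)\,\|\eta_h\|_{\widetilde H^\beta(D)}\|\theta_h\|_{\widetilde H^s(D)}$ is in hand, with $\rho(t)$ carrying a power of $t$ (coming from the $\mu^{(\cdot)/2-1}$ factors and the $e^{sy_j}$ weight), I would split the sum over $j<0$ and $j\ge0$ exactly as in Theorem~\ref{t:truncate}: using $t_j = e^{-jk/2}$, the weighted terms form a geometric series in $j$ whose sum is bounded by $C/k$ times the largest term, and multiplying by the prefactor $k$ in front of the sum cancels the $1/k$, leaving a constant independent of $k$ (and of $N^\pm$ chosen via \eqref{e:choiceN}). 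I expect the main obstacle to be item (i)--(iii) bookkeeping at the endpoint $\beta\to 3/2$: getting the correct pairing of regularity indices so that (a) the constant is genuinely $M$- and $t$-independent — this hinges entirely on invoking Lemmas~\ref{h2omt}, \ref{hdbound}, \ref{Tshift}, \ref{Tshifth} and the equivalences \eqref{dsplus}--\eqref{dsneg} rather than generic Lipschitz-domain estimates — and (b) the logarithmic factor is unavoidable but the power of $h$ is exactly $\beta-s$ and no worse. A secondary technical point is handling $\pi_h\widetilde\eta_h$ versus $\widetilde\eta_h$ inside the discrete resolvent, i.e. that the finite-element subproblem \eqref{e:vhM} has right-hand side $(\widetilde\eta_h,\phi_h)_{\omt}$ tested only against $\vhmt$, which is $\pi_h\widetilde\eta_h$; since $\widetilde\eta_h\in\vhmt$ already (by \eqref{a:mesh}), this is actually $\pi_h\widetilde\eta_h=\widetilde\eta_h$ on the nose, so that subtlety disappears and only the consistency of $T_{t,h}$ versus $T_t$ remains.
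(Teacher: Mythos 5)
Your overall architecture is the same as the paper's: reduce to a per-node operator identity of the form $(t\mu(t))^{-2}(\mu(t)^{-1}I+T_{t,h})^{-1}\pi_h(T_{t,h}-T_t)(\mu(t)^{-1}I+T_t)^{-1}\widetilde\eta_h$, use the uniform finite element error bounds (Lemma~\ref{tzer}, \eqref{Tdifapp}) based on the uniform convexity regularity of Lemma~\ref{h2omt}, the shift Lemmas~\ref{Tshift}--\ref{Tshifth}, the discrete norm equivalences \eqref{dsplus}--\eqref{dsneg}, and Lemma~\ref{hdbound} plus \eqref{e:dot-tilde} to return to $\|\eta_h\|_{\widetilde H^\beta(D)}$; your observations about $\pi_h\widetilde\eta_h=\widetilde\eta_h$ and the $M$-, $t$-independence of all constants are also on target. (Minor slips: the correct shift is $\mu(t)=t^{-2}-1$, not $\mu=t^2$, and the paper splits off the finitely weighted terms $t_j>1/2$, which are handled by a crude $O(h^2t^{-2})$ bound in $L^2$ using the operators $S_t$ without the mass term.)

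The genuine gap is in the balancing step for the small-$t$ part of the sum, which is the heart of the proof. Your displayed resolvent-difference bound is measured in $L^2(\omt)$, and you then claim each summand carries ``an exponentially decaying factor in $y_j$'' so that the weighted sum behaves like a geometric series with $h$-independent ratio, the logarithm coming from some endpoint effect at $\beta\to3/2$ or a dyadic/$(\pi_h-I)$ argument. Neither claim survives the bookkeeping. Pairing an $L^2$ error bound with $\|\theta_h\|_{L^2(D)}$ produces no decay at all against the weights $e^{sy_j}$ (which grow like $e^{skj}$ up to $j=N^+\sim k^{-2}$), so the sum diverges as $k\to0$. The paper instead measures the error in $\dH{-s}_h(\omt)$ and pairs with $\|\widetilde\theta_h\|_{\dH{s}_h}$; Lemma~\ref{Tshifth} then yields exactly the factor $t_j^{2s}$, which cancels $e^{sy_j}$ \emph{identically} -- there is no exponential decay left. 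Summability is recovered only by giving up an extra $h^{-\epsilon}$ through the inverse inequality (bounding $\pi_h:L^2(\omt)\to\dH{s+\epsilon}_h$ by $c\,h^{-s-\epsilon}$), which buys the extra decay $t_j^{\epsilon}$; with $\epsilon=\min\{1-s,1/\ln(1/h)\}$ one has $h^{-\epsilon}\le c$ and $k\sum_{t_j\le 1/2}e^{-\epsilon y_j/2}\le C\epsilon^{-1}\le C(1+\ln(1/h))$, see \eqref{epsilon} and \eqref{e1-sum-bound}. That $\epsilon^{-1}$ is the sole source of the logarithm, for every $\beta\in(s,3/2)$, not an endpoint or dyadic phenomenon. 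A second, smaller omission: the three-factor calibration must be done differently for $\beta\in(s,1]$ (where \eqref{Tdifapp} is used only with rate $h$ from $\dH{-1}$, the missing $h^{\beta-1}$ being recovered from the discrete resolvent, and Lemma~\ref{hdbound} -- stated only for $\beta\ge1$ -- is replaced by \eqref{e:dot-tilde}), so a single generic ``variant'' of your displayed estimate will not cover the whole range of $\beta$.
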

\begin{proof}
In this proof, $C$ denotes a generic constant independent of $h$, $M$,
$k$ and $t$.  

 Fix $\eta_h \in \mathbb V_h(D)$ and denote by $\tilde \eta_h$ its extension by zero outside $D$.
We first observe that for $\theta_h\in \mathbb{V}_h(D)$ and $\tilde \theta_h$ its extension by zero outside $D$, we have
\begin{equation}\begin{aligned}
(w^M(\widetilde \eta_h,t_j),\theta_h)_D&= ( \pi_h
w^M(\widetilde \eta_h,t_j),\widetilde \theta_h)_\omt,
\end{aligned}
\end{equation}
where $\pi_h$ denotes the $L^2$ projection onto $\mathbb V_h(\Omega^M(t))$.
Using the above identity and recalling that $t_j=e^{-y_j/2}$, we obtain
\begin{equation*}
\begin{split}
a^{k,M}(\eta_h,\theta_h)-a^{k,M}_h(\eta_h,\theta_h) 
= \frac{c_s }{2} \underbrace{k\sum_{t_j\le \frac 12} e^{sy_j}
  ( \pi_hw^M(\widetilde \eta_h,t_j)-w^M_h(\widetilde
  \eta_h,t_j),\widetilde \theta_h)_{\Omega^M(t)}}_{=:E_1
} \\ + 
\frac{c_s }{2}\underbrace{k\sum_{t_j> \frac 12} e^{sy_j} (
  \pi_hw^M(\widetilde \eta_h,t_j)-w^M_h( \widetilde \eta_h,t_j),\widetilde \theta_h)_{\Omega^M(t)}}_{=: E_2}. 
\end{split}
\end{equation*}
We bound the two terms separately and start with the latter.   

$\boxed{1}$ In view of the definitions \eqref{wMt} of $w^M(t)$ and \eqref{e:wMh} of $w^M_h(t)$, we have
\begin{equation} \label{e:usev_v_h}
 \pi_h w^M(\widetilde \eta_h,t)-w^M_h(\widetilde \eta_h,t)
= \pi_h v^M(\widetilde \eta_h,t)-v^M_h(\widetilde \eta_h,t).
\end{equation}
We recall that $T_t = T_{\Omega^M(t)}$ and $S_t$ are defined by \eqref{e:Tomega} and \eqref{e:Sop} respectively.
Using these operators and the relations satisfied by  $v^M(t)$ and  $v_h^M(t)$ (see \eqref{vtM} and  \eqref{e:vhM}), we arrive at 
\begin{equation}
\begin{aligned}
\pi_h w^M(\widetilde \eta_h,t)&-w^M_h(\widetilde \eta_h,t)
 = [\tzer_{t,h} (\tzer_{t,h}+t^2I)^{-1}- \pi_h\tzer_t(\tzer_t+t^2I)^{-1}]\widetilde \eta_h\\
&  = t^{2}(\tzer_{h,t} +t^{-2}I)^{-1} \pi_h(\tzer_{t,h}-\tzer_t)(\tzer_t+t^{2}I)^{-1} \widetilde \eta_h.
\end{aligned}
\label{vdif}
\end{equation}
Thus,
\begin{align*}
\| \pi_h w^M(\widetilde \eta_h,t)&-w^M_h(\widetilde \eta_h,t)\|_\ltomt\\
 &\leq t^{2}\|(\tzer_{t,h}+t^{2}I)^{-1}
   \pi_h(\tzer_{t,h}-\tzer_t)(\tzer_t+t^{2}I)^{-1}\|\ \| \widetilde \eta_h \|_\ltomt\\
 &\le t^{2}\|(\tzer_{t,h}+t^{2}I)^{-1} \pi_h\|\
   \|\tzer_{t,h}-\tzer_t\|\ 
 \|(\tzer_t+t^{2}I)^{-1}\|\  \| \widetilde \eta_h \|_\ltomt.
\end{align*}
Here we have used $\|\cdot\|$ to denote the operator norm of operators from $L^2(\omt)$ to $L^2(\omt)$.
Combining
$$
\| (\tzer_{t,h}+t^2I)^{-1}\pi_h\|\le t^{-2},\quad 
\| (\tzer_t+t^{2}I)^{-1}\|
\leq t^{-2}
$$
and Lemma~\ref{tzer} gives
$$
\|\pi_h w^M(\widetilde \eta_h,t)-w^M_h(\widetilde \eta_h,t)\| \leq C t^{-2} h^2 \|\widetilde \eta_h\|_{L^2(\omt)}.
$$
Whence, 
$$
\begin{aligned}
|E_2| &\le Ch^2k\sum_{t_j> \frac 12} e^{(s+1)jk} \|   {\widetilde\eta_h} \|_{L^2(\OMt)} \|  {\widetilde\theta_h} \|_{L^2(\OMt)}\\
&\leq C h^2 \|  \eta_h \|_{L^2(D)} \| \theta_h \|_{L^2(D)}\left(k\sum_{jk<2\ln2} e^{(s+1)jk}\right)\le C h^2 \|  \eta_h \|_{L^2(D)} \| \theta_h \|_{L^2(D)}.
\end{aligned}
$$

\def\HHp#1{\dH {#1}}
$\boxed{2}$ We now focus on $E_1$ which requires a finer
analysis using 
intermediate spaces. Also, we argue differently for
$\beta\in(1,3/2)$ and for $\beta\in(s,1]$.   In either case, we define
  $$\epsilon:=\min\{1-s,1/\ln(1/h)\}$$
  and note that
  \begin{equation}
  \epsilon^{-1} \le c(1+\ln(1/h)) \quad\hbox{and}\quad
  h^{-\epsilon}\le c
\label{epsilon}
\end{equation}
with $c$ depending on $s$ but not $h$.

When $\beta\in(1,3/2)$, we invoke \eqref{e:usev_v_h} again to deduce
\begin{equation}
|E_1| \leq k\sum_{t_j\le\frac 12} e^{sy_j}\|\pi_h v^M(\widetilde
\eta_h,t_j)-v^M_h( \widetilde \eta_h,t_j) \|_{\dH {-s}_h } \| \widetilde\theta_h \|_{\dH
{s}_h}.
\label{e1est}
\end{equation}
We set $\mu(t) := t^{-2}-1$ and compute
\begin{equation}\label{e:three-parts}
\begin{aligned}
\pi_h v^M(\widetilde \eta_h,t)&-v^M_h(\widetilde \eta_h,t) =
                          t^{-2} [(I+\mu(t)T_{t,h})^{-1}T_{t,h}
                          -\pi_h T_t (I+\mu(t)T_t)^{-1}]\widetilde \eta_h\\
& = (t\mu(t))^{-2} (\mu(t)^{-1}I+T_{t,h})^{-1}\pi_h(T_{t,h}-T_t)(\mu(t)^{-1}I+T_t)^{-1}\widetilde \eta_h,
\end{aligned}
\end{equation}
which is now estimated in three parts.
Lemma~\ref{Tshift} guarantees that
$$
	\|(\mu(t)^{-1}I+T_t)^{-1}\|
	_{\dH \beta \rightarrow \dH {\beta-2}}\le 1,
$$
where we recall that $\dH s$ stands for $\dH s(\Omega^M(t))$.
For the second part, the error estimate \eqref{Tdifapp} with $1+r = \beta$ reads
$$
	\| T_{t,h}-T_t \|_{\dH {\beta-2} \rightarrow
          L^2(\Omega^M(t))}\leq Ch^{\beta}.
        $$
        We estimate the last term of the product in the right hand side of \eqref{e:three-parts} by
$$\begin{aligned}	\|(\mu(t)^{-1} &+T_{t,h})^{-1}
        \pi_h\|_{L^2(\Omega^M(t))\rightarrow \dH {-s}_h }\\
&\le C  \|(\mu(t)^{-1} +T_{t,h})^{-1}\|_{ \dH {s+\epsilon}_h\rightarrow  \dH {-s}_h}  \|
\pi_h\|_{L^2(\Omega^M(t))\rightarrow \dH {s+\epsilon}_h}.
\end{aligned}$$
Thus, Lemma~\ref{Tshifth}, the inverse estimate and \eqref{epsilon} yield
$$
\|(\mu(t)^{-1} +T_{t,h})^{-1} \pi_h \|_{L^2(\Omega^M(t))\rightarrow \dH
  {-s}_h }\le C h^{-s-\epsilon} t^{(2s+\epsilon-2)}\le C h^{-s}
t^{(2s+\epsilon-2)}.
$$

Note that for $t\in (0,1/2]$,  $0<t^2\leq \mu(t)^{-1} \leq \frac 4 3 t^2 \leq \frac 1 3$ so
that 
$$ (t\mu(t))^{-2}\le  \frac {16t^2} 9.
$$
Combining the above estimates with 
\eqref{e:three-parts}  gives
\begin{equation}
\| \pi_h v^M(\widetilde \eta_h,t)-v^M_h(\widetilde \eta_h,t)\|_{\dH{-s}_h} \leq
Ct^{2s+\epsilon}h^{\beta-s} \|\widetilde{\eta}_h\|_{\dH\beta},
\label{vmest}
\end{equation}

Since $t_j=e^{-y_j/2}$,
$$e^{sy_j} t_j^{2s+\epsilon}= e^{-\epsilon y_j/2} .$$
Estimates \eqref{e1est}, \eqref{vmest} and \eqref{dsplus} then yield
\begin{equation}\label{e1-sum-bound}\begin{aligned}
	|E_1| &\le Ch^{\beta-s}k\sum_{ky_j\ge 2\ln2} e^{-\epsilon y_j/2} 
	 \|\widetilde \eta_h\|_{\dH
           \beta}\|\widetilde\theta_h\|_{\dH{s}}\\
         &\le Ch^{\beta-s}\epsilon^{-1} 
	 \|\widetilde \eta_h\|_{\dH
           \beta}\|\widetilde\theta_h\|_{\dH{s}}.\end{aligned}
\end{equation}

\boxed{3}
We bound the norms on $\Omega^M(t)$ by norms on $D$ using \eqref{extenhs} with $r=s$ and Lemma~\ref{hdbound} to arrive at
$$
\begin{aligned}
	|E_1| &\le Ch^{\beta-s}\epsilon^{-1} \|\widetilde \eta_h\|_{H^\beta(\RRD)}\|\theta_h\|_{\dH {s}(D)}.
\end{aligned}
$$
Applying the norm  equivalence \eqref{e:dot-tilde} gives
\begin{equation}\label{e:e1:final}
|E_1| \leq C h^{\beta-s} \epsilon^{-1} \| \eta_h\|_{\widetilde H^\beta(D)}\|\theta_h\|_{\widetilde H^{s}(D)}.
\end{equation}

\boxed{4} When $\beta\in(s,1]$, we bound \eqref{e:three-parts} using different norms. In fact, we have
$$
	\|(\mu(t)^{-1}I+T_t)^{-1}\|
	_{\dH \beta \rightarrow \dH {-1}}\le t^{\beta-1}, 
	\quad 
		\| T_{t,h}-T_t \|_{\dH {-1} \rightarrow
          L^2(\Omega^M(t))}\leq Ch,
$$
and by Lemma~\ref{Tshifth},
$$\begin{aligned}
	\|(\mu(t)^{-1} +T_{t,h})^{-1}& \pi_h \|_{L^2(\Omega^M(t))\rightarrow 
	  \dH {-s}_h }\\
        &\le 	\|(\mu(t)^{-1} +T_{t,h})^{-1} \pi_h
      \|_{\dH {1-\beta+s+\epsilon} \rightarrow 
	\dH {-s}_h }
\|\pi_h\|_{L^2(\omt)\rightarrow \dH {1-\beta+s+\epsilon}}\\
&\le 
C h^{-1+\beta-s} t^{(2s+\epsilon-\beta-1)}.
\end{aligned}
$$
These estimates lead \eqref{vmest} and hence \eqref{e1-sum-bound}
as well when $\beta \in (s,1]$.  
The remainder of the proof is the same as in the case $\beta \in (1,3/2)$ except that the norm equivalence
\eqref{e:dot-tilde} is invoked in place of Lemma~\ref{hdbound}.

\boxed{5} The proof of the theorem is complete upon combining the estimates for $E_1$ and $E_2$.
\end{proof}

\subsection{Error Estimates}
Now that the consistency error between $a(\cdot,\cdot)$ and $a_h^{k,M}(\cdot,\cdot)$ is obtained, we can apply Strang's lemma to deduce the convergence of the approximation $u_h$ towards $u$ in the energy norm.
To achieve this, we need a result regarding 
the stability and approximability of the Scott-Zhang interpolant
  $\pi^{sz}_h$ \cite{SZ90} in the  fractional spaces $\widetilde
  H^\beta(D)$.

This is the subject of the next lemma. Its proof is somewhat technical and given in Appendix~\ref{a:lagrange}.

\begin{lemma}[Scott-Zhang Interpolant] \label{fe-interpolant} Let $\beta \in (1,3/2)$.   Then, there is a constant $C$ independent of $h$ such that
\begin{equation}\label{e:stability_sc}
\|\pi^{sz}_h v\|_{\widetilde H^\beta(D)} \le C \|v\|_{\widetilde H^\beta(D)} 
\end{equation}
 and for $s\in [0,1]$,
\begin{equation}\label{e:approx_sc}
\|\pi^{sz}_h v -v\|_{\widetilde H^s(D)}\le C h^{\beta-s} \|v\|_{\widetilde H^\beta(D)},
\end{equation}
for all  $v \in  \widetilde H^\beta(D)$.
\end{lemma}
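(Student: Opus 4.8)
The plan is to derive both bounds from the classical \emph{local} stability and approximation estimates for the Scott--Zhang operator \cite{SZ90}, combined with interpolation and, for the top-order estimate, a far-field/near-field splitting of the Gagliardo--Slobodeckij seminorm. Recall that $\pi^{sz}_h$ is built from local degrees of freedom, reproduces $\vh(D)$, maps $H^1_0(D)$ into $\vh(D)\subset H^1_0(D)$, and satisfies, with constants depending only on the shape-regularity constant in \eqref{a:shape-regular},
$$
\|v-\pi^{sz}_h v\|_{H^m(T)}\le C h^{\,l-m}\,|v|_{H^l(S_T)},\qquad m\in\{0,1\},\ l\in\{1,2\},\ m\le l,
$$
where $S_T$ is the union of the (uniformly boundedly many) elements meeting $T$. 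Since $\beta\in(1,3/2)$, every $v\in\widetilde H^\beta(D)$ lies in $H^1_0(D)=\widetilde H^1(D)$, so $\pi^{sz}_h v$ and $v-\pi^{sz}_h v$ also lie in $H^1_0(D)$; moreover, by Remark~\ref{r:beta-equiv}, on $\widetilde H^\beta(D)$ the norm $\|\cdot\|_{\widetilde H^\beta(D)}$ is equivalent to $\|\cdot\|_{H^\beta(D)}$, which is what we estimate below.

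For \eqref{e:approx_sc}, first interpolate the displayed local estimates in $l$: summing over the elements, using finite overlap of the patches and $(H^1(D),H^2(D))_{\beta-1,2}=H^\beta(D)$, gives
$$
\|v-\pi^{sz}_h v\|_{L^2(D)}\le C h^\beta\|v\|_{\widetilde H^\beta(D)},\qquad \|v-\pi^{sz}_h v\|_{H^1(D)}\le C h^{\beta-1}\|v\|_{\widetilde H^\beta(D)}.
$$
Because $v-\pi^{sz}_h v\in H^1_0(D)$ and $\{\widetilde H^s(D)\}_{s\in[0,1]}=\{(L^2(D),H^1_0(D))_{s,2}\}$ is an interpolation scale (Section~\ref{s:dotted_space}), interpolating these two bounds in the target space yields \eqref{e:approx_sc} for every $s\in[0,1]$, the value $s=1/2$ included.

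For \eqref{e:stability_sc} we must bound the seminorm $|\pi^{sz}_h v|_{H^\beta(D)}$ directly, since $\pi^{sz}_h v$ is only piecewise linear and there is no $\beta=2$ endpoint available for interpolation. Writing $\|\pi^{sz}_h v\|_{H^\beta(D)}\le C\|v\|_{H^1(D)}+|v|_{H^\beta(D)}+|v-\pi^{sz}_h v|_{H^\beta(D)}$ (using $H^1$-stability for the lower-order part), it remains to control $|v-\pi^{sz}_h v|_{H^\beta(D)}$. Set $g:=\nabla(v-\pi^{sz}_h v)$ and $\sigma:=\beta-1\in(0,1/2)$, and split the defining double integral over $\{|x-y|\ge h\}$ and $\{|x-y|<h\}$. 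In the far field, $\int_{|x-y|\ge h}|x-y|^{-d-2\sigma}\,dy\le C\sigma^{-1}h^{-2\sigma}$, so this part is $\le Ch^{-2\sigma}\|g\|_{L^2(D)}^2$, which is $\le C\|v\|_{\widetilde H^\beta(D)}^2$ precisely because the $H^1$-approximation bound of the previous step contributes the cancelling factor $h^{2(\beta-1)}=h^{2\sigma}$. In the near field, write $g(x)-g(y)=(\nabla v(x)-\nabla v(y))-(\nabla\pi^{sz}_h v(x)-\nabla\pi^{sz}_h v(y))$; the $\nabla v$ difference is dominated by the full seminorm $|v|_{H^\beta(D)}^2$, while, since $\nabla\pi^{sz}_h v$ is piecewise constant, only pairs in distinct elements at distance $<h$ contribute to the $\nabla\pi^{sz}_h v$ part, which thereby reduces to $\sum_F \big(\iint_{x\in T,\,y\in T',\,|x-y|<h}|x-y|^{-d-2\sigma}\,dx\,dy\big)\,|[\nabla\pi^{sz}_h v]_F|^2$, the sum running over interior faces $F$ shared by elements $T,T'$.

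The main obstacle is this face sum. Using quasi-uniformity \eqref{a:quasi-uniform}, the geometric integral is of order $h^{d-2\sigma}$ (finite exactly because $\sigma<1/2$, i.e. $\beta<3/2$), so the bound closes provided one proves the Scott--Zhang gradient-jump estimate $\sum_F h^{d}\,|[\nabla\pi^{sz}_h v]_F|^2\le C h^{2\sigma}\,|v|_{H^\beta(D)}^2$. This is the technical heart, and it is where the appendix's work goes: on each element patch one inserts an $L^2$-optimal linear polynomial, exploits that $\pi^{sz}_h$ reproduces linears and is locally $L^2\!\to\!H^1$ bounded through the inverse inequality \eqref{e:inverse}, invokes a Bramble--Hilbert estimate on the patch to get $\|\nabla(v-\pi^{sz}_h v)\|_{L^2(T)}\le Ch^{\sigma}|v|_{H^\beta(S_T)}$, and passes from element-wise gradient errors to face jumps via a scaled trace inequality; summing over faces with finite overlap then yields the asserted bound. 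Combining the far- and near-field estimates with the norm equivalence of Remark~\ref{r:beta-equiv} proves \eqref{e:stability_sc}.
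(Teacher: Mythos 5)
Your proof of \eqref{e:approx_sc} is essentially the paper's: local Scott--Zhang estimates, an averaged-Taylor/Bramble--Hilbert argument on the patches (the paper is careful here, decomposing each patch into star-shaped unions $\widehat D_j$ before invoking \cite{dupont-scott}), yielding \eqref{frac-est}, and then interpolation between the $L^2$ and $H^1_0$ endpoints. For \eqref{e:stability_sc} you take a genuinely different route. The paper first proves a standalone localization result (Lemma~\ref{rbound}): for $r\in(0,1/2)$, the $H^r$ norm of an extension by zero is bounded by $h^{-2r}\|\cdot\|_{L^2}^2$ plus a sum of \emph{single-element} seminorms, the near-field cross-element contributions being absorbed into single-element terms via the fact that extension by zero from one element is $H^r$-bounded precisely when $r<1/2$. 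Applied to $\nabla(v-\pi^{sz}_h v)$ with $r=\beta-1$, the element-wise seminorm of the piecewise-constant $\nabla\pi^{sz}_h v$ vanishes, so \eqref{e:stability_sc} follows from \eqref{frac-est} with no jump terms ever appearing. You instead perform the near/far splitting by hand and pay for the near-field cross-element part with the gradient jumps of $\pi^{sz}_h v$, which forces you to prove the extra estimate $\sum_F h^d\,|[\nabla\pi^{sz}_h v]_F|^2\le Ch^{2(\beta-1)}|v|^2_{H^\beta(D)}$. That estimate is indeed obtainable by the polynomial-insertion argument you sketch, so your route closes; the paper's extension-by-zero trick buys exactly the avoidance of this jump lemma, while your version is more self-contained (it does not need a separate localization lemma) at the cost of the extra combinatorics. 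Two points in your sketch need tightening. First, close pairs $x\in T$, $y\in T'$ with $|x-y|<h$ need not lie in face-adjacent elements, so either telescope through a chain of faces or, more simply, bound $\bigl|\nabla\pi^{sz}_h v|_T-\nabla\pi^{sz}_h v|_{T'}\bigr|$ for any two elements of a common patch by inserting one linear $p$ on the patch and using that the gradients are constants, $\bigl|\nabla(\pi^{sz}_h v-p)|_T\bigr|\le Ch^{-d/2}\|\nabla(\pi^{sz}_h v-p)\|_{L^2(T)}$. Second, the ``scaled trace inequality'' step cannot pass through $[\nabla v]_F$: since $\nabla v\in H^{\beta-1}$ with $\beta-1<1/2$, it has no trace on $F$, so the comparison must go through the local polynomial as above. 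Note finally that both arguments use $\beta<3/2$ in an essential but different way: yours for the convergence of $\iint_{x\in T,\,y\in T'}|x-y|^{-d-2(\beta-1)}\,dx\,dy$ across a face, the paper's for the extension-by-zero bound underlying Lemma~\ref{rbound}.
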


We note that the above lemma holds for $\beta\in (0,1)$ and $s\in
  (0,\beta)$ provided that $\pi^{sz}_h$ is replaced by $\pi_h$, the $L^2$ projection onto $\mathbb V_h(D)$; see e.g.
  Lemma 5.1 of \cite{BP15}.  
In order to consider both case simultaneously in the following proof, we
set  $\Pi_h   = \pi_h $ 
when $\beta\in [0,1]$ and $\Pi_h =\pi^{sz}_h $ when $\beta\in (1,3/2)$.

\begin{theorem}\label{t:total_error}
Assume that the solution $u$ of \eqref{mainWP} belongs to $\widetilde H^\beta(D)$ for $\beta \in (s,3/2)$.
Let $\delta:=\min(2-s,\beta)$ be as in Theorem~\ref{t:quad_error}, $k$ be the quadrature spacing and
 $c_I$ be the inverse constant in \eqref{e:inverse}. We assume that the quadrature parameters 
 $N^-$ and $N^+$ are chosen according to \eqref{e:choiceN}.  Let $\gamma(k)$ be given by \eqref{e:gammak} 
 and assume that $k$ is chosen sufficiently small so that
$$
c_I \gamma(k) h^{s-\delta} < 1.
$$
Moreover,
let $u_h \in \mathbb V_h(D)$ be the solution of \eqref{finaldiscrete}.   
Then there is a constant $C$
  independent of $h$, $M$ and $k$ satisfying
\begin{equation}\label{energy-bound}
\|u-u_h\|_{\widetilde H^s(D)}\leq C(\gamma(k)+e^{-cM}+(1+\ln{(h^{-1})}) h^{\beta-s})\|u\|_{\widetilde H^\beta(D)}.
\end{equation}
\end{theorem}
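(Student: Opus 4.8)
The plan is to run a Strang-type argument built on two ingredients already in place: the $\vh(D)$-coercivity of $a_h^{k,M}(\cdot,\cdot)$ from Theorem~\ref{l:ellipticity} — available precisely because the smallness condition $c_I\gamma(k)h^{s-\delta}<1$ is assumed here — and the three consistency bounds of Theorems~\ref{t:quad_error} (with Remark~\ref{r:balanced}), \ref{t:truncate} and \ref{t:discrete_consistency}. Set $w_h:=\Pi_h u\in\vh(D)$, that is $w_h=\pi_h u$ when $\beta\in(s,1]$ and $w_h=\pi^{sz}_h u$ when $\beta\in(1,3/2)$. Since $\delta=\min(2-s,\beta)$ satisfies $s<\delta\le\beta<3/2$, the interpolant $w_h$ belongs to every $\widetilde H^r(D)$ with $r\le\beta$, and the stability part of Lemma~\ref{fe-interpolant} (respectively Lemma~5.1 of \cite{BP15}) together with the continuous embeddings $\widetilde H^\beta(D)\hookrightarrow\widetilde H^\delta(D)\hookrightarrow L^2(D)$ yields
\[
\|w_h\|_{\widetilde H^\beta(D)}+\|w_h\|_{\widetilde H^\delta(D)}+\|w_h\|_{L^2(D)}\le C\|u\|_{\widetilde H^\beta(D)},
\]
while the approximation part gives $\|u-w_h\|_{\widetilde H^s(D)}\le Ch^{\beta-s}\|u\|_{\widetilde H^\beta(D)}$. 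By the triangle inequality it then suffices to estimate $\|u_h-w_h\|_{\widetilde H^s(D)}$.

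For that, coercivity of $a_h^{k,M}$, the defining relation \eqref{finaldiscrete} for $u_h$, and the variational problem \eqref{mainWP} for $u$ (valid against $u_h-w_h\in\vh(D)\subset\widetilde H^s(D)$) give, after inserting $\pm\, a(w_h,u_h-w_h)$,
\[
c\,\|u_h-w_h\|_{\widetilde H^s(D)}^2\le a(u-w_h,\,u_h-w_h)+\big[a(w_h,\,u_h-w_h)-a_h^{k,M}(w_h,\,u_h-w_h)\big].
\]
The first term is bounded by $C\|u-w_h\|_{\widetilde H^s(D)}\|u_h-w_h\|_{\widetilde H^s(D)}$ using the boundedness of $a(\cdot,\cdot)$ on $\widetilde H^s(D)\times\widetilde H^s(D)$. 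In the bracketed consistency term both arguments lie in $\vh(D)$, so I would split it as $[a-a^k]+[a^k-a^{k,M}]+[a^{k,M}-a_h^{k,M}]$ evaluated at $(w_h,u_h-w_h)$ and apply Theorem~\ref{t:quad_error} with Remark~\ref{r:balanced} (contributing $\gamma(k)\|w_h\|_{\widetilde H^\delta(D)}\|u_h-w_h\|_{\widetilde H^s(D)}$), Theorem~\ref{t:truncate} (contributing $Ce^{-cM}\|w_h\|_{L^2(D)}\|u_h-w_h\|_{L^2(D)}$, and $\|u_h-w_h\|_{L^2(D)}\le C\|u_h-w_h\|_{\widetilde H^s(D)}$), and Theorem~\ref{t:discrete_consistency} (contributing $C(1+\ln h^{-1})h^{\beta-s}\|w_h\|_{\widetilde H^\beta(D)}\|u_h-w_h\|_{\widetilde H^s(D)}$), respectively.

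Dividing through by $\|u_h-w_h\|_{\widetilde H^s(D)}$, using the displayed norm bounds on $w_h$, and combining with $\|u-w_h\|_{\widetilde H^s(D)}\le Ch^{\beta-s}\|u\|_{\widetilde H^\beta(D)}$ — absorbing the bare $h^{\beta-s}$ into $(1+\ln h^{-1})h^{\beta-s}$ — produces exactly \eqref{energy-bound} with $C$ independent of $h,M,k$ (the constants in Theorems~\ref{l:ellipticity}--\ref{t:discrete_consistency} and in Lemma~\ref{fe-interpolant} being so). Because every quantitative estimate needed is already established, there is no deep obstacle; the points requiring care are purely bookkeeping: verifying that $\delta=\min(2-s,\beta)$ genuinely lies in $(s,2-s]$ and is $<3/2$ so that the interpolation-space norms $\|w_h\|_{\widetilde H^\delta(D)}$ are finite and dominated by $\|u\|_{\widetilde H^\beta(D)}$, and checking that the hypothesis $c_I\gamma(k)h^{s-\delta}<1$ (inherited verbatim in the statement) indeed licenses the coercivity of $a_h^{k,M}$ used at the very first step.
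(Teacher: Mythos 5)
Your argument is correct and is essentially the paper's proof: the paper simply invokes the first Strang lemma (Theorem 4.1.1 of \cite{ciarlet}) for the coercivity-plus-consistency step that you derive by hand, then makes the same choice $v_h=\Pi_h u$, the same three-way splitting of the consistency term via Theorems~\ref{t:quad_error}, \ref{t:truncate} and \ref{t:discrete_consistency}, and the same stability/approximability bounds for $\Pi_h$.
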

\begin{proof}
In our context, the first Strang lemma (see e.g. Theorem 4.1.1 in \cite{ciarlet}) reads 
$$
\| u - u_h \|_{\tHs(D)} \le C\inf_{v_h \in \mathbb V_h {(D)}} \left(\| u - v_h \|_{\tHs(D)} + \sup_{w_h \in \vh {(D)}} \frac{|(a-a_{h}^{k,M})(v_h,w_h)|}{\| w_h \|_{\tHs(D)}}\right),
$$
where $C$ is a constant independent of $h$, $k$ and $M$.
From the consistency estimates \eqref{e:quad_cons}, \eqref{truncateb} and \eqref{h-estimate}, we deduce that
$$\begin{aligned}
\|u-u_h\|_{\widetilde H^s(D)}&\leq C\| u - \Pi_h u\|_{\tH^s(D)} \\
&\qquad + C(\gamma(k)+e^{-cM}+(1+\ln{(h^{-1})}) h^{\beta-s})\|\Pi_h u\|_{\widetilde H^\beta(D)}
\end{aligned}
$$
The desired estimate follows from the approximability and stability of $\Pi_h$.
\end{proof}

\section{Numerical implementation and results}\label{s:numerical}
In this section, we present detailed numerical implementation to solve the following model problems.

\subsection{Model Problems.}\label{s:analytical}

One of the difficulties in developing numerical approximation to
\eqref{mainWP} is that there are relatively few examples where analytical
solutions are available.  One exception is the case when $D$ is the
unit ball in $\RRD$.   In that case, the solution to the variational
problem
\begin{equation}
a(u,\phi) = (1,\phi)_D,\Forall \phi\in \tHs(D)
\label{var1}
\end{equation}
is radial and given by, (see \cite{DG13})
\begin{equation} 
u(x)= \frac {2^{-2s}\Gamma(d/2)} {\Gamma(d/2+s) \Gamma(1+s) }
(1-|x|^2)^s 
\label{anals}
\end{equation}

It is also possible to compute the right hand side corresponding to the
solution $u(x)=1-|x|^2$ in the unit ball. The corresponding 
right hand side can be derived by first computing  the Fourier transform
of $\widetilde u$, i.e.,
$$\cF(\widetilde u)=2J_2(|\zeta|)/|\zeta|^2,$$
where $J_n$ is the Bessel function of the first kind. 
When $0<s<1$, we obtain
\begin{equation}\label{rhs_smooth}
f(x)=\cF^{-1}(2|\zeta|^{2s-2}J_2(|\zeta|))=\frac{2^{2s}\Gamma(d/2+s)}{\Gamma(d/2)\Gamma(2-s)}
{}_2F_1\left(d/2+s, s-1,d/2,|x|^2\right) ,
\end{equation}
where ${}_2F_1$ is the Gaussian or ordinary hypergeometric function.

\begin{remark} [Smoothness]\label{singularf}
Even though the solution $u(x)=1-|x|^2$ is infinitely differentiable on the
unit ball, the right hand side $f$ has limited smoothness.   Note that
$f$ is the restriction of $(-\Delta)^s \widetilde u$ to the unit ball.
Now $\widetilde u\in H^{3/2-\epsilon}(\RRD)$ for $\epsilon>0$ but
is not in $H^{3/2}(\RRD)$.  This means that $(-\Delta)^s \widetilde u$
is only in $H^{3/2-2s-\epsilon}(\RRD)$ and hence $f$ is only
in  $H^{3/2-2s-\epsilon}(\Omega)$.  This is in agreement with the
singular behavior of $ {}_2F_1\left(d/2+s, s-1,d/2,t\right)$ at $t=1$ (see
\cite{NIST}, Section 15.4).
In fact,
$$\begin{aligned}
{}_2F_1\left(d/2+s, s-1,d/2,1\right)&=\frac {\Gamma(d/2+s)
  \Gamma(1-2s)}{\Gamma(d/2+1-s) \Gamma(-s)}\quad \hbox{when }0<s<1/2,\\
\lim_{t\rightarrow 1^-} \frac{{}_2F_1\left(d/2+s, s-1,d/2,t\right)}
{-\log(1-t)} &= \frac {\Gamma(d/2)}{\Gamma(-1/2)\Gamma(1/2)} \quad
\hbox{when }s=1/2,\\
\lim_{t\rightarrow 1^-} \frac {{}_2F_1\left(d/2+s, s-1,d/2,t\right)}
{(1-t)^{-2s+1}} &= \frac {\Gamma(d/2)}{\Gamma(-1/2)\Gamma(1/2)} \quad
\hbox{when }1/2<s<1.
\end{aligned}
$$
This implies that for $s\ge 1/2$, the trace on $|x|=1$ of $f(x)$ given by \eqref{rhs_smooth} fails to exist (as for generic  functions in
$H^{3/2-2s}(\RRD)$).
This singular behavior affects the convergence rate of the finite element
method  when the finite element data vector
is approximated using standard numerical quadrature (e.g. Gaussian quadrature).   
\end{remark}

\subsection{Numerical Implementation} \label{ss:num_impl}
Based on the notations in Section~\ref{s:truncated}, 
we set $\Omega=D$ to be either the unit disk in $\RR^2$ or $D=(-1,1)$ in $\RR$. Let $\Omega^M(t)$ be corresponding dilated domains.
In one dimensional case, we consider $\mathcal T_h(D)$ to be a uniform mesh and $\vh(D)$ to be the continuous piecewise linear 
finite element space.
For the two dimensional case, $\mathcal T_h(D)$ a regular (in the sense of page 247 in \cite{ciarlet}) subdivision made of quadrilaterals.
In this case, $\vh(D)$ is the set of continuous piecewise bilinear functions.

\subsubsection*{Non-uniform Meshes for $\omt$} 
We extend $\mathcal T_h(D)$ to non-uniform meshes $\mathcal T^M_h(t)$, thereby violating the quasi-uniform assumption.
For $t\le1$, we use a quasi-uniform mesh on $\Omega^M(t)=\Omega^M(1)$ with the same mesh size $h$. 
When $t>1$ and $D=(-1,1)$, we use
an exponentially graded mesh outside of $D$,
i.e. the mesh points are $\pm e^{ih_0}$ for $i=1,\ldots,\lceil M/h\rceil$ with $h_0=h(\ln{\gamma})/M$,
where $\gamma$ is the radius of $\omt$ (see \eqref{e:omega_Mt}).
Therefore, we maintain the same number of mesh points for all $\Omega^M(t)$. 
When $D$ is a unit disk in $\RR^2$, we start with a coarse subdivision of $\omt$ as in the left of Figure~\ref{f:grid} (the coarse mesh of $D$ in grey). Note that all vertices of a square have the same radial coordinates. 
We also point out that the position of the vertices along the radial direction and outside of $D$ follow the same exponential 
distribution as in the one dimensional case. 
Then we refine each cell in $D$ by connecting the midpoints between opposite edges.
For the cells outside of $D$, we consider the same refinement 
in the polar coordinate system $(\ln r,\theta)$ with $r>1$ and $\theta\in[0,2\pi]$.
This guarantees that mesh points on the same radial direction still follows the exponential distribution after global refinements and the number of mesh points in $\mathcal T^M_h(t)$ is unchanged for all $t>0$. The figure on the right of Figure~\ref{f:grid} shows the 
exponentially graded mesh after three times global refinement.

\begin{figure}[ht!]
\begin{center}
    \begin{tabular}{cc}
    \includegraphics[scale=.45]{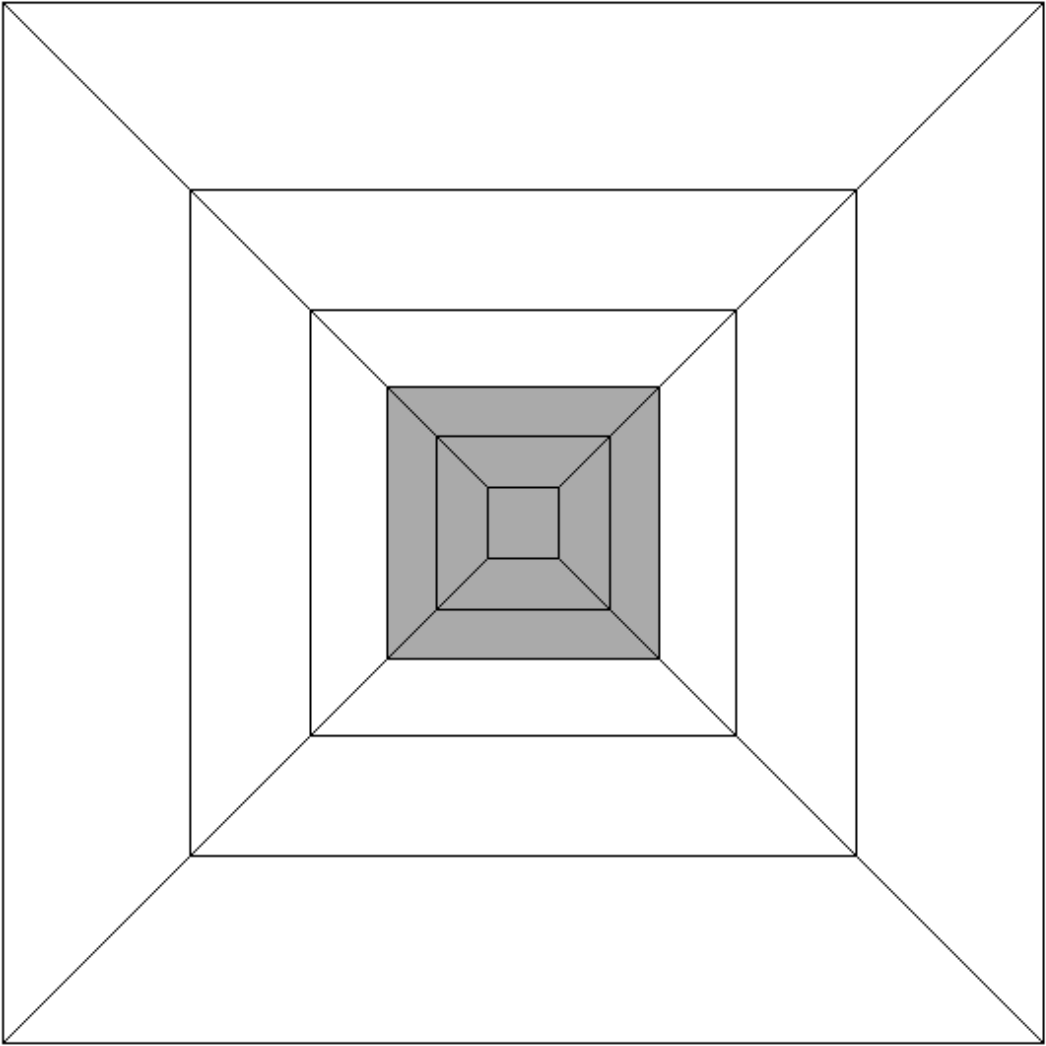} & \includegraphics[scale=.32]{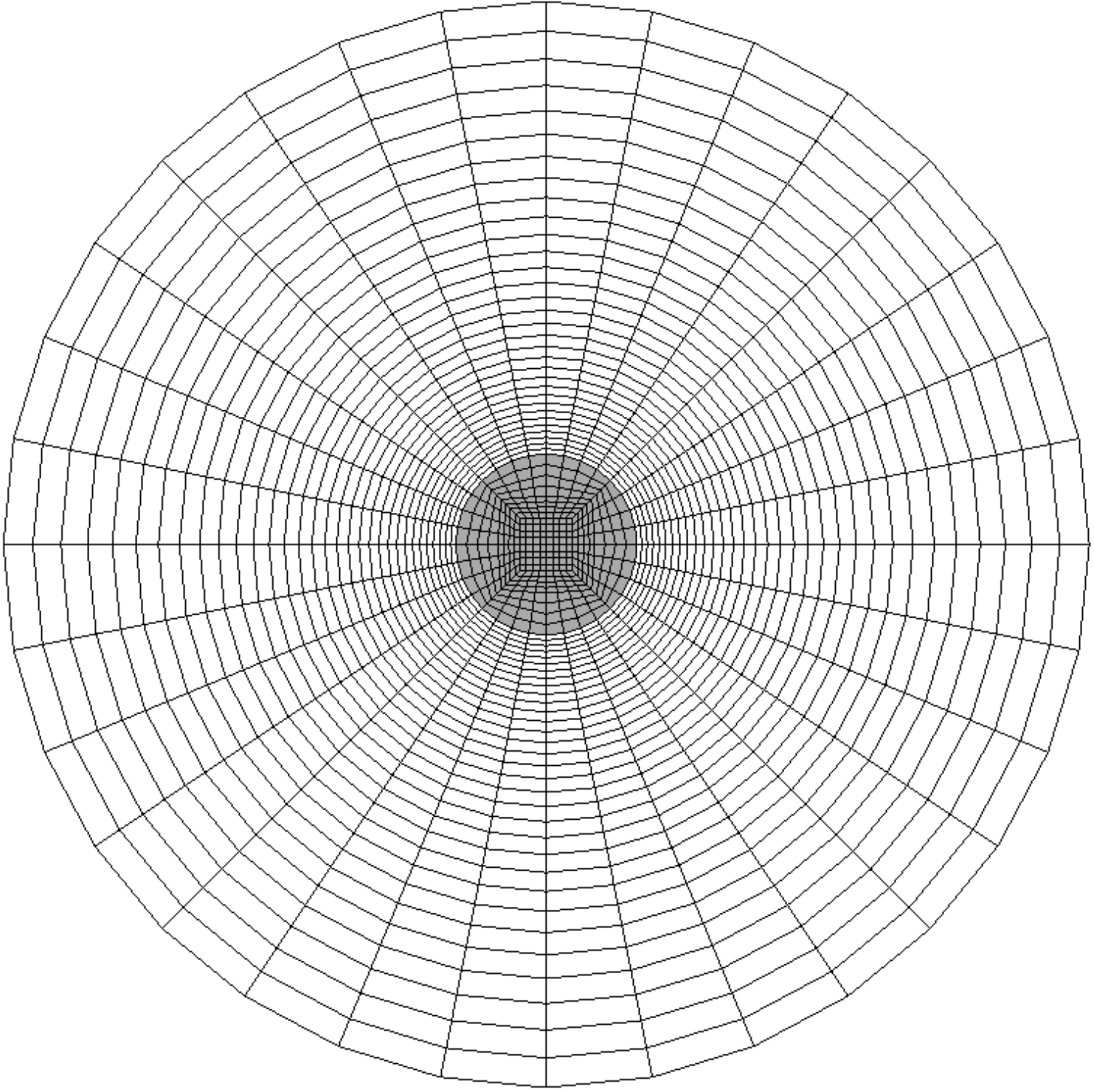}\\
    \end{tabular}
\end{center}
\caption{Coarse gird (left) and three-times-refined non-uniform 
grid (right) of $\Omega^M(t)$ with $M=4$ and $t=1$. Grids of $D$ are in grey.}
\label{f:grid}
\end{figure}

\subsubsection*{Matrix Aspects}
To express the linear system to be solved, we denote by $U$ to be the coefficient vector of $u_h$ and $F$ to be the coefficient vector of the $L^2$ 
projection of $f$ onto $\mathbb{V}_h(D)$.
Let $M_h(t)$ and $A_h(t)$ be the mass and stiffness matrix in $\mathbb{V}^M_h(t)$.
Denote $M_{D,h}$ to be the mass matrix in $\vh(D)$. The linear system is given by
\begin{equation}\label{system_matrix}
\frac{\sin{(\pi \beta)k}}{\pi}\sum_{i=-\Nminus}^{\Nplus}e^{s y_i}M_{D,h}(e^{y_i}M_h(t_i)+A_{h}(t_i))^{-1}A_{h}(t_i)U=F 
\end{equation}
with $y_i=ik$ and $t_i=e^{-y_i/2}$.
Here ${M_{D,h}},\ U$ and $F$ are all extended by zeros so that the dimension 
of the system is equal to the dimension of $\mathbb{V}^M_h(t)$.

\subsubsection*{Preconditioner}
Since the linear system is symmetric, we apply the Conjugate Gradient method 
to solve the above linear system. Due to the norm equivalence between 
$( L^2(D),H^1_0(D))_{s,2}$ and $\widetilde{H}^s(D)$, the condition number of the system matrix 
is bounded by $Ch^{-2s}$. In order to reduce the number of iterations in one dimensional space, 
we use fractional powers of the discrete Laplacian $L_{D,h}$ 
as a preconditioner, where $L_{D,h} :H^1_0(\Omega)\to L^2(D)$ is defined by
$$d_D(L_{D,h} w,\phi_h)=d_D(w,\phi_h), \Forall \phi_h\in \vh(D) .$$ 
This can be computed by the discrete sine transform similar to the implementation 
discussed in \cite{BLP16}. 
More precisely, the matrix representation of 
$L_{D,h}$ is given by ${(M_{D,h})}^{-1}{A_{D,h}}$, where $A_{D,h}$ is stiffness matrix in $\vh(D)$. 
The eigenvalues of $A_{D,h}$ and $M_{D,h}$ (for the same eigenvectors) are $a_j:=(2+\cos(j\pi h))/h$ and $m_j:=h(4+2\cos(j\pi h))/6$ for $j=1,\ldots,\dim(\vh(D))$, respectively.
Therefore, the eigenvalues of $L_h$ are given by $\lambda_{j,h}:=a_j/m_j$.
We use 
$$
B:= S\Lambda S
$$
as a preconditioner, where $S_{ij}:= \sqrt{ 2 h}\sin({ij \pi h})$ and $\Lambda$ is the diagonal matrix whose diagonal entries are $\lambda_{j,h}^{-s}/m_j$. We also note that $S^{-1}=S$.

In two dimensional space, we use the multilevel preconditioner advocated in \cite{bramble2000computational}.

\subsection{Numerical Illustration for the Non-smooth Solution}
We first consider the numerical experiments for the model problem \eqref{var1}
and study the behavior of the  $L^2(D)$ error.

\subsubsection*{Influence from the Sinc Quadrature and Domain Truncation.} 
When $D=(-1,1)$, we approximate 
the solution on the fixed uniform mesh with the mesh size $h=1/8192$. The domain truncation parameter $M$ is also fixed 
to be $20$. Thus, $h$ is small enough and $M$ is large enough so that the $L^2(D)$-error 
is dominant by the sinc quadrature spacing $k$. The left part of 
Figure~\ref{f:quad_domain} shows that the $L^2(D)$-error quickly converges
to the error dominant by the Galerkin approximation when $k$ approaches zero. Similar results are observed from
the right part of Figure~\ref{f:quad_domain} when the domain truncation parameter $M$ increases. In this case,
the mesh size $h=1/8192$ and the quadrature step size $k=0.2$.

\subsubsection*{Error Convergence from the Finite Element Approximation}
We note that we implement the numerical algorithm for the two dimensional case using the deal.ii Library 
\cite{bangerth2007deal} and we invert matrices in \eqref{system_matrix} using the direct solver from UMFPACK \cite{davis2007umfpack}. 
Figure~\ref{f:sol} shows  the approximated solutions for $s=0.3$ and $s=0.7$, respectively.
Table~\ref{table:l2err} reports errors $\|u-u_h\|_{L^2(D)}$ and rates of convergence with $s=0.3, 0.5$ and $0.7$. Here the quadrature spacing
($k=0.25$) and the domain truncation parameter ($M=4$) are fixed so that the finite element discretization dominates the error.

We note that 
Theorem 7.1 together with Theorem 5.4 in \cite{GG15} (see also Proposition 2.7 
in \cite{BPM17}) guarantees that when $\partial D$ is of class $C^\infty$ and  $f$ is
in $L^2(D)$, the solution of \eqref{mainWP} is in
$\widetilde H^{s+\alpha^-}(D)$ where 
\begin{equation}\label{e:alpha}
\alpha:=\min\{s,1/2\}
\end{equation} 
and $\alpha^-$ denotes any number strictly smaller that $\alpha$.
This indicates that the expected rate of convergence in $L^2(D)$ norm should be $\beta+\alpha^--s$
if the solution $u$ is in $\widetilde H^\beta(D)$.
Since the solution $u$ is in $H^{s+1/2-\epsilon}(D)$ (see \cite{AB15} for a proof),
Table~\ref{table:l2err} matches the expected rate of convergence $\min(1,s+1/2)$.

\begin{figure}[ht!]
\begin{center}
    \begin{tabular}{cc}
    \includegraphics[scale=.48]{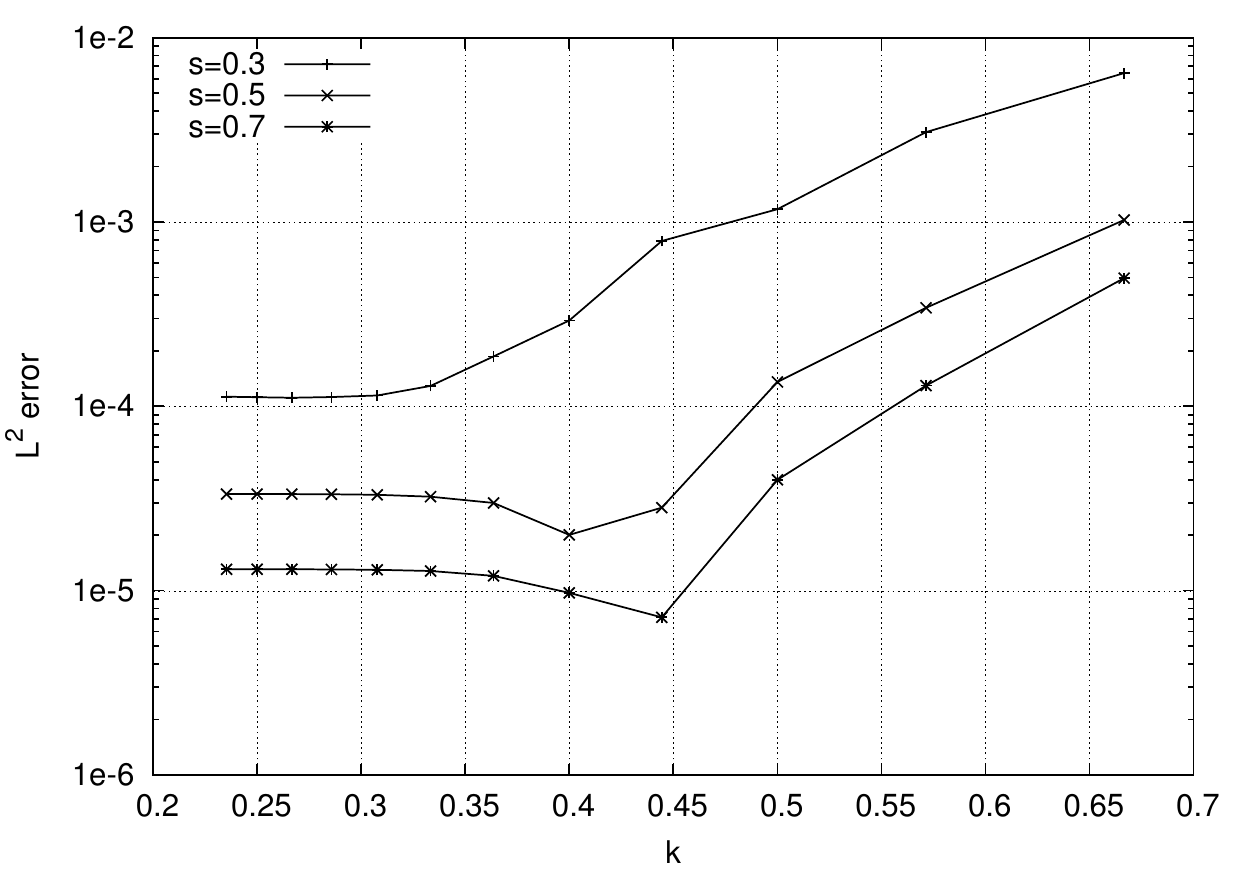} & \includegraphics[scale=.48]{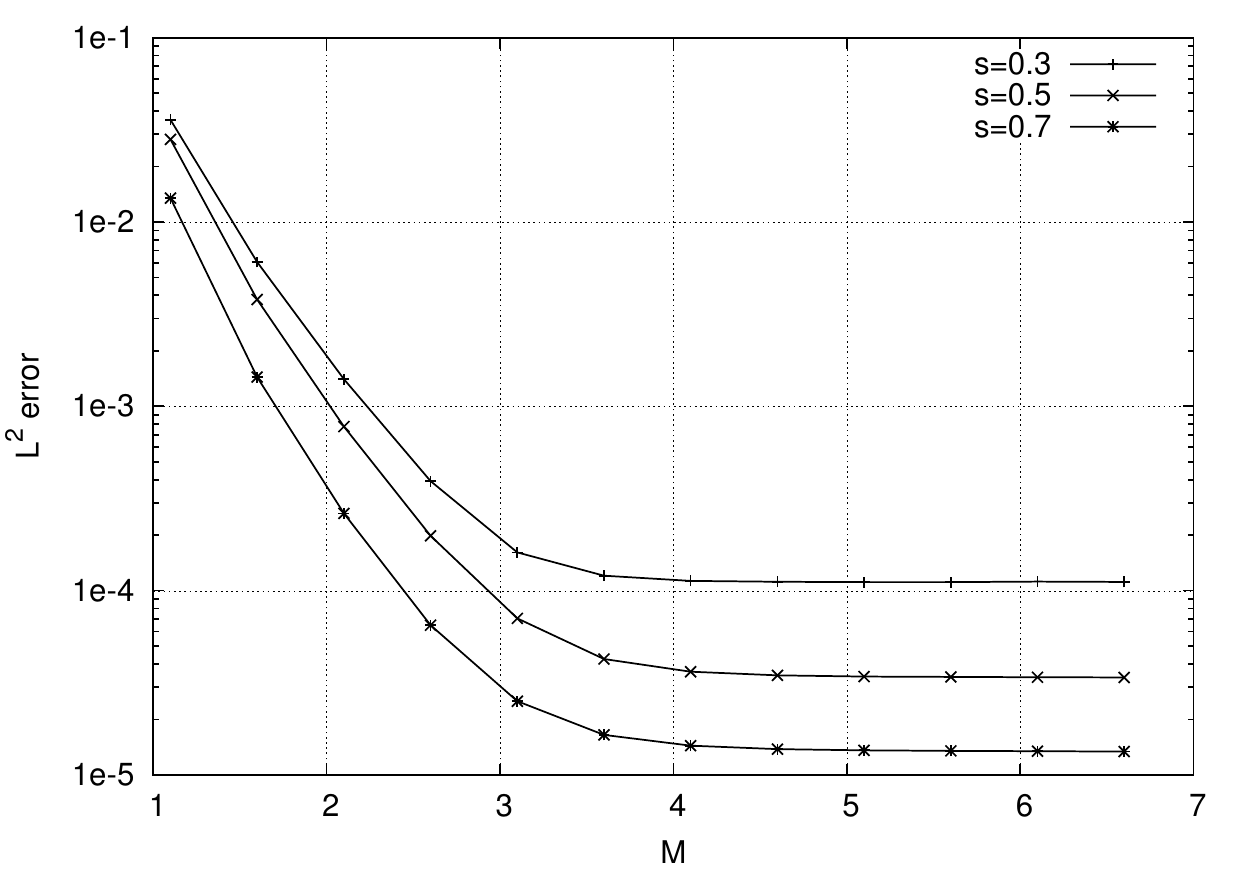}\\
    \end{tabular}
\end{center}
\caption{The above figures report the  $L^2(D)$-error behavior when $D=(-1,1)$.
The left one shows the error as a function of the quadrature spacing
$k$ for a fixed mesh size ($h=1/8192$) and domain truncation parameter ($M=20$). 
The right plot reports the  error as a function of the domain truncation parameter
$M$ with fixed mesh size ($h=1/8192$) and quadrature spacing
$(k=0.2)$.  The spatial error dominates when $k$ is small (left)
 and 
$M$  is large (right).}
\label{f:quad_domain}
\end{figure}

\begin{table}[h]
\begin{tabular}{|l|  l |l| l |l| l |l| l l}
\hline
$\#$DOFS & \multicolumn{2}{c|}{$s=0.3$}& \multicolumn{2}{c|}{$s=0.5$} & \multicolumn{2}{c|}{$s=0.7$} \\
\hline
345 & $2.69\times 10^{-1} $ & - & $1.63\times 10^{-1} $ & - & $1.03\times 10^{-1} $ & - \\\hline
1361  & $1.59\times 10^{-1} $ & 0.7575& $9.07\times 10^{-2} $ & 0.8426 & $5.55\times 10^{-2} $ & 0.8918 \\\hline
5409  & $9.56\times 10^{-2} $ & 0.7323& $5.05\times 10^{-2} $ & 0.8438 & $2.95\times 10^{-2} $ & 0.9091 \\\hline
21569  & $5.71\times 10^{-2} $ & 0.7447& $2.78\times 10^{-2} $ & 0.8633 & $1.54\times 10^{-2} $ & 0.9366 \\\hline
86145  & $3.38\times 10^{-2} $ & 0.7547& $1.51\times 10^{-2} $ & 0.8832 & $7.91\times 10^{-3} $ & 0.9641 \\\hline
344321  & $1.99\times 10^{-2} $ & 0.7644& $8.07\times 10^{-3} $ & 0.9004 & $3.97\times 10^{-3} $ & 0.9936 \\
\hline
\end{tabular}
\caption{$L^2(D)$-errors for different values of $s$ versus the number
  of degree of freedom used for the 2-D nonsmooth computations.
$\#$DOFS denotes the dimension of the finite element space $\vhmt.$}
\label{table:l2err}
\end{table}

\begin{figure}[ht!]
\begin{center}
    \begin{tabular}{cc}
    \includegraphics[scale=.25]{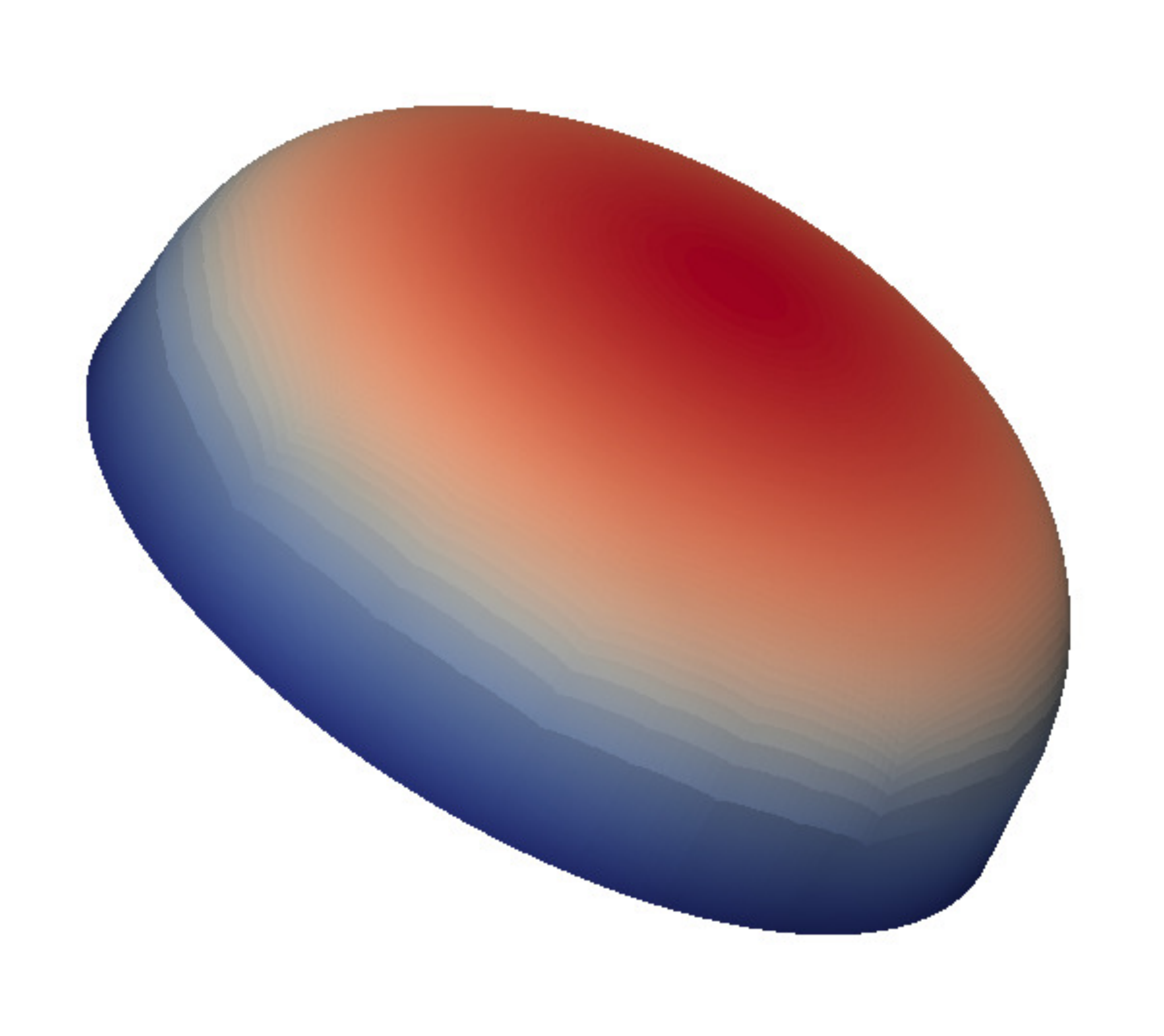} & \includegraphics[scale=.25]{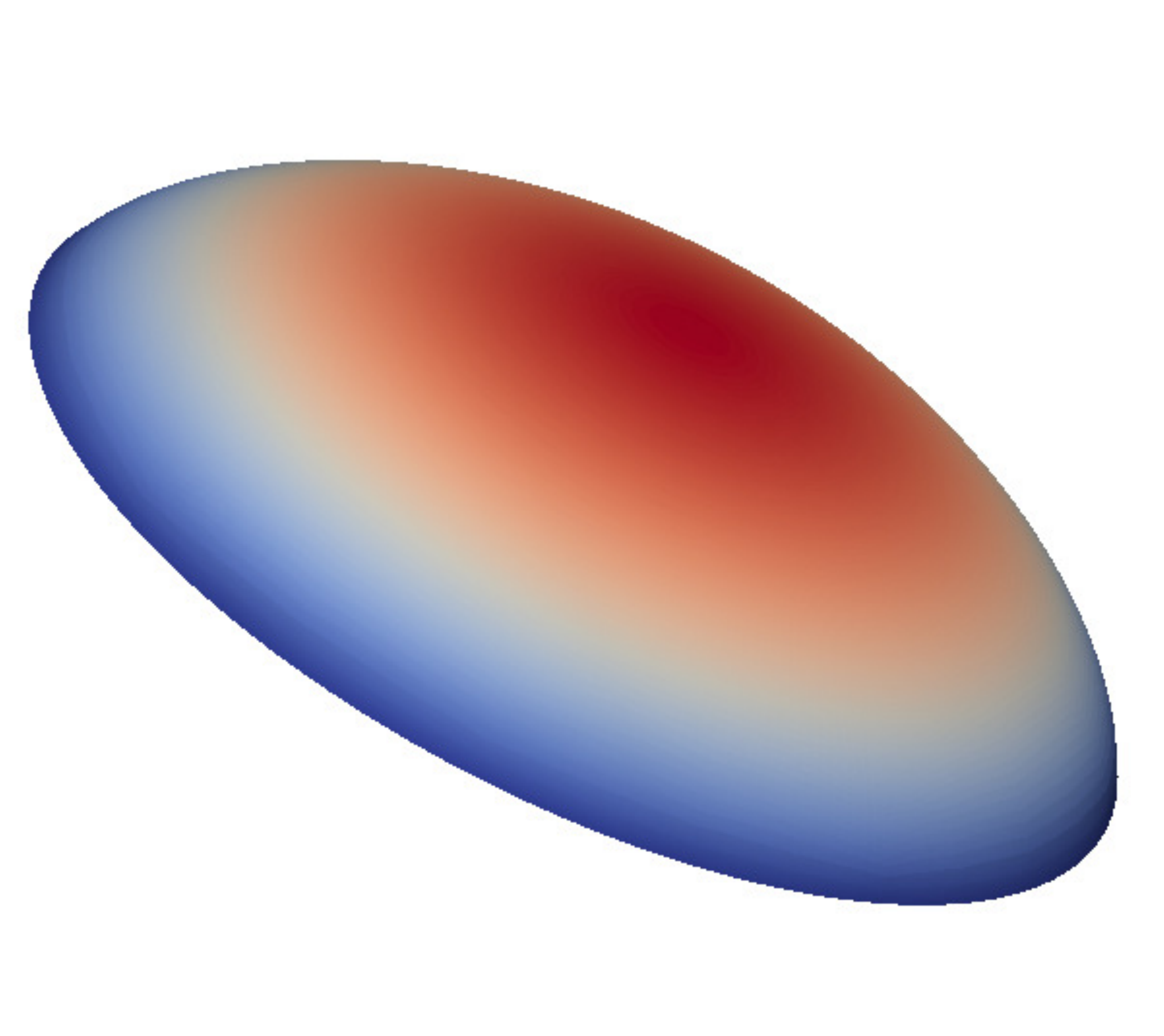}\\
    \end{tabular}
\end{center}
\caption{Approximated solutions of \eqref{anals} for $s=0.3$ (left) and $s=0.7$ (right) on the unit disk.}
\label{f:sol}
\end{figure}

\subsection{Numerical Illustration for the Smooth Solution}
When the solution is smooth, the finite element error (assuming
the exact computation of the stiffness entries, i.e. no consistency error) satisfies
$$
\|u-u_h\|_{L^2(D)}\le  c h^{2-s+\alpha^-},
$$
where $\alpha$ is given by \eqref{e:alpha}.
 In contrast, because of the inherent consistency error,  our method
 only guarantees (c.f.,
Theorem~\ref{t:total_error})   
\begin{equation}
\|u-u_h\|_{L^2(D)}\le  c h^{3/2-s+\alpha^-} .
\label{expected}
\end{equation}

Table~\ref{t:l2err_dealii} reports $L^2(D)$-errors and rates for the problem \eqref{mainWP} with the smooth solution $u(x)=1-|x|^2$ and
the corresponding right hand side data \eqref{rhs_smooth} in the unit disk. To see the error decay, here we choose the quadrature step
size $k=0.2$ and the domain truncation parameter $M=5$.   The observed decay in the error does not match the expected rate \eqref{expected}.
We think this loss of accuracy may be due either to the deterioration of the shape regularity constant in generating the subdivisions of $\Omega^M(t)$ (see Section~\ref{ss:num_impl}) or to the imprecise numerical integration of the singular right hand side in \eqref{rhs_smooth}.

To illustrate this, we
consider the one dimensional problem.   Instead of using
\eqref{rhs_smooth} to compute the right hand side vector, 
similar to \eqref{onedstiff}, we
compute
\begin{equation} (f,\phi_j)= a(u,\phi_j) = \frac{(\partial_L^{2s-1}\phi_j,u^\prime)_D
+(\partial_L^{2s-1}u,\phi_j^\prime)_D}{2\, \cos(s\pi)} \label{onedRHS}
\end{equation}
with $D=(-1,1)$. We note that when $s< 1/2$, the fractional derivative with the negative power $2s-1$ still makes sense for the local basis function $\phi_j$. 
The right hand side of \eqref{onedRHS} can now be computed exactly.  

\begin{table}[H]
\centering
\begin{tabular}{|l|  l |l| l |l| l |l| l l}
\hline
$\#$DOFS & \multicolumn{2}{c|}{$s=0.3$}& \multicolumn{2}{c|}{$s=0.5$} & \multicolumn{2}{c|}{$s=0.7$} \\
\hline
409 & $6.24\times 10^{-2} $ & - & $9.55\times 10^{-2} $ & - & $1.35\times 10^{-1} $ & - \\\hline
1617  & $2.90\times 10^{-2} $ & 1.10& $4.33\times 10^{-2} $ & 1.14 & $6.27\times 10^{-2} $ & 1.10 \\\hline
6433  & $1.44\times 10^{-2} $ &  1.01& $1.94\times 10^{-2} $ & 1.15 & $2.81\times 10^{-2} $ & 1.16 \\\hline
25665  & $7.21\times 10^{-3} $ & 1.00& $8.55\times 10^{-3} $ & 1.19 & $1.20\times 10^{-2} $ & 1.23 \\\hline
102529  & $3.56\times 10^{-3} $ & 1.02& $3.67\times 10^{-3} $ & 1.22 & $4.78\times 10^{-3} $ & 1.32 \\\hline
409857  & $1.74\times 10^{-3} $ & 1.04& $1.54\times 10^{-3} $ & 1.25 & $1.73\times 10^{-3} $ & 1.47 \\
\hline
\end{tabular}
\caption{$L^2(D)$-errors and rates for $s=0.3$, $0.5$ and $0.7$ for the problem
\eqref{mainWP} with the right hand side \eqref{rhs_smooth}. 
$\#$DOFS denotes the number of degree of freedoms of $\OMt$.}
\label{t:l2err_dealii}
\end{table}

We illustrate the convergence rate for the one dimensional case in Table~\ref{oned1} 
when the $L^2(D)$-projection of right hand side is computed from \eqref{onedRHS}.
In this case, we compute at $s=0.3,0.4,0.7$ as the
expression in \eqref{onedRHS} is not valid for $s=0.5$.  We also fix $k=0.2$ and $M=6$. In all cases,
we observe the predicted rate of convergence $\min(3/2,2-s)$, see \eqref{expected}. 

\begin{table}[H]
\centering
\begin{tabular}{|l|  l |l| l |l| l |l| l l}
\hline
$h$ & \multicolumn{2}{c|}{$s=0.3$}& \multicolumn{2}{c|}{$s=0.4$} & \multicolumn{2}{c|}{$s=0.7$} \\
\hline
1/16  & $4.51\times 10^{-4} $ & & $3.47\times 10^{-4} $ & & $9.27\times 10^{-4} $ & \\\hline
1/32  & $1.42\times 10^{-4} $ & 1.58& $1.02\times 10^{-4} $ & 1.77 &$4.16\times 10^{-4} $ & 1.16 \\\hline
1/64  & $4.25\times 10^{-5} $ & 1.63& $3.31\times 10^{-5} $ & 1.62 & $1.80\times 10^{-4} $ & 1.21 \\\hline
1/128  & $1.34\times 10^{-5} $ & 1.66& $1.14\times 10^{-5} $ & 1.54 & $7.66\times 10^{-5} $ & 1.23 \\\hline
1/256  & $4.43\times 10^{-6} $ & 1.59& $4.06\times 10^{-6} $ & 1.49 & $3.21\times 10^{-5} $ & 1.25 \\\hline
1/512  & $1.50\times 10^{-6} $ & 1.56& $1.46\times 10^{-6} $ & 1.48 & $1.33\times 10^{-5} $ & 1.27 \\\hline
\end{tabular}
\caption{$L^2(D)$-errors and rates for $s=0.3$, $0.4$ and $0.7$ for the
  one dimensional problem when right hand
  side of the discrete problem is computed by \eqref{onedRHS}.}
\label{oned1}
\end{table}


\appendix
\section{Proof of Lemma~\ref{fe-interpolant}}\label{a:lagrange}   

The proof of Lemma~\ref{fe-interpolant} requires the
following auxiliary localization result. We refer to \cite{faermann} for a similar result
in two dimensional space.

\begin{lemma} \label{rbound} For $r\in (0,1/2)$, let $v$ be in $H^r(D)$ and $\widetilde
v$ denote the extension by zero of $u$ to $\RRD$.  There exists a constant $C$ independent of $h$ such that 
$$\|\widetilde v\|_{H^r(\RRD)}^2\le C \bigg (h^{-2r} \|v\|_{L^2(D)}^2 +
\sum_{\tau\in \Thd} |v|_{H^r(\tau)}^2\bigg )
$$
with a constant $C$ independent of $h$.
\end{lemma}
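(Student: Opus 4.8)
The plan is to localise the Gagliardo seminorm of the zero extension. Since $\widetilde v$ vanishes on $D^c$, splitting $\RRD\times\RRD$ into the four blocks $D\times D$, $D\times D^c$, $D^c\times D$, $D^c\times D^c$ and using the symmetry of the kernel gives (using the equivalent Sobolev--Slobodeckij norm on $\RRD$) the identity
$$
\|\widetilde v\|_{H^r(\RRD)}^2 = \|v\|_{L^2(D)}^2 + |v|_{H^r(D)}^2 + 2\int_D\int_{D^c}\frac{|v(x)|^2}{|x-y|^{d+2r}}\,dy\,dx .
$$
Thus it suffices to bound the last two terms by $C\big(h^{-2r}\|v\|_{L^2(D)}^2+\sum_{\tau\in\Thd}|v|_{H^r(\tau)}^2\big)$ with $C$ independent of $h$; since $h\le1$, the factor $h^{-2r}\ge1$ then also absorbs the first term.

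For the seminorm term I would split the outer integral over the elements, $|v|_{H^r(D)}^2=\sum_{\tau}|v|_{H^r(\tau)}^2+\sum_{\tau}\int_\tau\int_{D\setminus\tau}|x-y|^{-d-2r}|v(x)-v(y)|^2\,dy\,dx$, so the diagonal part is exactly the target broken seminorm. For a fixed $\tau$ with $h_\tau=\operatorname{diam}\tau$, I separate $D\setminus\tau$ into a far field $\{\,\operatorname{dist}(y,\tau)\ge h_\tau\,\}$ and a near field. On the far field, $|v(x)-v(y)|^2\le2|v(x)|^2+2|v(y)|^2$ together with $\int_{\{|z|\ge h_\tau\}}|z|^{-d-2r}\,dz=Ch_\tau^{-2r}$ gives, after Tonelli and the quasi-uniformity $h_\tau\simeq h$, a contribution bounded by $Ch^{-2r}\|v\|_{L^2(D)}^2$. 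The near field meets only a uniformly bounded number $N=N(\sigma,\rho,d)$ of other elements $\tau'$ (a volume-packing consequence of shape regularity and quasi-uniformity); for each such pair I again use $|v(x)-v(y)|^2\le2|v(x)|^2+2|v(y)|^2$ together with the elementary inequalities $\operatorname{dist}(x,\tau')\ge\operatorname{dist}(x,\partial\tau)$ for $x\in\tau$ and $\operatorname{dist}(y,\tau)\ge\operatorname{dist}(y,\partial\tau')$ for $y\in\tau'$ (any segment joining two distinct elements crosses the boundary of the first), which reduce $\int_\tau\int_{\tau'}|x-y|^{-d-2r}|v(x)-v(y)|^2$ to $C\int_\tau|v|^2\operatorname{dist}(x,\partial\tau)^{-2r}\,dx+C\int_{\tau'}|v|^2\operatorname{dist}(y,\partial\tau')^{-2r}\,dy$. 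The boundary tail $\int_D\int_{D^c}|x-y|^{-d-2r}|v(x)|^2$ is handled element by element in the same way, using $\operatorname{dist}(x,D^c)\ge\operatorname{dist}(x,\partial\tau)$ for $x\in\tau\subset D$. In this way every remaining contribution is reduced to a weighted single-element integral $\int_T|v|^2\operatorname{dist}(x,\partial T)^{-2r}\,dx$.

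These are controlled by the fractional Hardy inequality: on the reference simplex $\widehat T$ and for $r\in(0,1/2)$ one has $\int_{\widehat T}|w|^2\operatorname{dist}(\widehat x,\partial\widehat T)^{-2r}\,d\widehat x\le C\big(\|w\|_{L^2(\widehat T)}^2+|w|_{H^r(\widehat T)}^2\big)$, the restriction $r<1/2$ (i.e. $2r<1$) being precisely what makes the boundary weight admissible. Transporting this through the affine element map together with the standard scalings $\|w\|_{L^2(T)}^2\simeq h_T^{d}\|\widehat w\|_{L^2(\widehat T)}^2$, $|w|_{H^r(T)}^2\simeq h_T^{d-2r}|\widehat w|_{H^r(\widehat T)}^2$ and $\operatorname{dist}(\cdot,\partial T)\simeq h_T\operatorname{dist}(\cdot,\partial\widehat T)$ yields, with a constant depending only on the shape-regularity constant $\sigma$ of $\Thd$,
$$
\int_T|w|^2\operatorname{dist}(x,\partial T)^{-2r}\,dx\le C\big(h_T^{-2r}\|w\|_{L^2(T)}^2+|w|_{H^r(T)}^2\big),\qquad T\in\Thd .
$$
Applying this on each $\tau$ and on each neighbour $\tau'$, summing over $\tau\in\Thd$, and using $h_\tau\simeq h$ and that each element occurs as a neighbour at most $N$ times, collapses all the off-diagonal and tail contributions into $C\big(h^{-2r}\|v\|_{L^2(D)}^2+\sum_{\tau}|v|_{H^r(\tau)}^2\big)$; combined with the opening identity and $h\le1$ this proves the lemma. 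The only genuinely delicate ingredient is the near-field element-to-element interaction --- the part of $|v|_{H^r(D)}^2$ that the broken seminorm does not see, together with the boundary tail --- and the argument is organised precisely so that each such interaction is reduced to a single-element weighted integral governed by the scaled fractional Hardy inequality; the $h$-uniformity of all constants is then inherited from the shape regularity and quasi-uniformity of $\Thd$.
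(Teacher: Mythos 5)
Your proof is correct, and it reaches the estimate by a somewhat different route than the paper. Both arguments share the same skeleton: the far-field interactions (pairs of points at distance $\gtrsim h$) are crude-bounded via $|v(x)-v(y)|^2\le 2|v(x)|^2+2|v(y)|^2$ and the tail integral $\int_{|z|\ge ch}|z|^{-d-2r}\,dz\simeq h^{-2r}$, producing the $h^{-2r}\|v\|_{L^2(D)}^2$ term, while the near-field interactions are where $r<1/2$ and shape regularity enter. The difference is in how the near field is handled. The paper introduces an auxiliary extended mesh and patches $\ttau$, bounds the near-field part by $\sum_\tau |v|_{H^r(\ttau)}^2$, and then splits $v$ on each patch into its single-element pieces, invoking the boundedness of the zero-extension operator $H^r(\tau')\to H^r(\RRD)$ for $r<1/2$ (with a scaling-invariant constant) to collapse each patch seminorm into a sum of elementwise norms. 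You instead work directly with element-pair interactions and the boundary tail, reduce each to a weighted single-element integral $\int_T|v|^2\,\mathrm{dist}(x,\partial T)^{-2r}\,dx$, and control that by the fractional Hardy inequality on the reference simplex plus affine scaling. The two key lemmas are essentially equivalent facts (the standard proof that zero extension is $H^r$-bounded for $r<1/2$ goes through exactly the Hardy integral you use, via the same splitting of the Gagliardo seminorm of the extension), so the restriction $r<1/2$ plays the identical role in both arguments. What your version buys is that it avoids the auxiliary exterior mesh $\Tht$ and the patch bookkeeping, at the cost of having to verify the elementary geometric inequalities $\mathrm{dist}(x,\tau')\ge \mathrm{dist}(x,\partial\tau)$ and the finite-overlap count for near-field neighbours; the paper's version localizes more in the spirit of Faermann's result and delegates the delicate step to a single quotable extension lemma. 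All your constants are indeed $h$-uniform for the reasons you state (reference-element Hardy constant, affine scaling, quasi-uniformity, bounded overlap), so the argument is complete.
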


\begin{proof} 
Let $\Tht$ be any quasi-uniform mesh (satisfying \eqref{a:shape-regular}
and \eqref{a:quasi-uniform}) which extends $\Thd$ beyond a unit
size neighborhood of $D$. 
Fix $\delta>0$ and for 
  $\tau \in \Thd $ set 
$$
\ttau=\cup_{\{\eta\in \Tht\  :\ \hbox{dist}(\eta,\tau)<\delta h\}} \eta.  
$$
Let
$$D_h^\delta=\mathop{\bigcup}_{\tau\in \Thd} \ttau$$
and let $\Thdd$ denote the set of $\tau\in \Tht$ contained in $D_h^\delta$.
Finally, for $\tau\in \Thdd\setminus \Thd$, set 
$$
\ttau=\bigcup_{\{\eta\in \Thdd\  :\ 
\hbox{dist}(\eta,\tau)<\delta h\}}\eta .
$$

Fix $v\in H^r(D)$.  Since $\widetilde v$ vanishes outside of $D_h^\delta$,
$$\begin{aligned} |\widetilde v|_{H^r(\RRD)}^2&=\int_{D_h^\delta}\int_{D_h^\delta}
\frac {(\widetilde v(x)-\widetilde v(y))^2}
{|x-y|^{d+2r}} \, dx \, dy \\
&\qquad +2\int_{D}\int_{(D_h^\delta)^c}
\frac {v(y)^2}
{|x-y|^{d+2r}} \, dx \, dy=:J_1+J_2.
\end{aligned}$$

The second integral above is bounded by
\begin{equation}
\begin{aligned} J_2 &\le 2 \int_{D}\int_{|x-y|\ge \delta h}
\frac {v(y)^2}
{|x-y|^{d+2r}} \, dx \, dy\\
&=2 (\delta h)^{-2r}  \int_{D} v(y)^2 \, dy 
\int_{|z|\ge 1} |z|^{-d-2r} \, dz= C h^{-2r}\|v\|_{L^2(D)}^2.
\end{aligned}
\label{j2bound}
\end{equation}

Expanding the first integral gives
\begin{equation}\begin{aligned}
J_1&= \sum_{\tau\in \Thdd} \int_\tau  \int_\ttau 
\frac {(v(x)-v(y))^2}
{|x-y|^{d+2r}} \, dx \, dy\\
&\qquad  +\sum_{\tau\in \Thdd} \int_\tau \int_{\ttau^c} \frac {(v(x)-v(y))^2}
{|x-y|^{d+2r}}\, dx \, dy=:J_3+J_4.\end{aligned}
\label{j1exp}
\end{equation}
Applying the arithmetic-geometric mean inequality gives
\begin{equation}
J_4\le 2 \sum_{\tau\in \Thdd} \int_\tau \int_{\ttau^c}  \frac
{v(x)^2+v(y)^2}{|x-y|^{d+2r}}\, dx \, dy.
\label{partj1}
\end{equation}
As in \eqref{j2bound},
\begin{equation}
J_5:= \sum_{\tau\in \Thdd} \int_\tau \int_{\ttau^c} \frac {v(y)^2} {|x-y|^{d+2r}}\
 \, dx \, dy 
\le C h^{-2r} \|v\|_{L^2(D)}^2.
\label{firstsum}
\end{equation}
Now,
$$\begin{aligned} 
\{(\tau,\tau_1)&\ : \ \tau\in \Thdd \hbox{ and } \tau_1\in \ttau^c\}\\
&=\{(\tau,\tau_1)\in \Thdd\times \Thdd\ : \
\hbox{dist}(\tau,\tau_1)>\delta h\}\\
&=\{(\tau,\tau_1)\ : \ \tau_1\in \Thdd \hbox{ and } \tau\in
\ttau_1^c\}.\end{aligned}
$$
Using this and Fubini's Theorem gives
$$\begin{aligned}
 \sum_{\tau\in \Thdd}& \int_\tau \int_{\ttau^c} \frac {v(x)^2} {|x-y|^{d+2r}}\
 \, dx \, dy\\
&\qquad  = \sum_{\tau_1\in \Thdd} \int_{\tau_1} \int_{\ttau_1^c} \frac {v(x)^2} {|x-y|^{d+2r}}\
 \, dy \, dx= J_5.
\end{aligned}
$$
Thus $J_4\le 4 J_5$ and is bounded by the right hand side of
\eqref{firstsum}.

For $J_3$, we clearly have
$$J_3\le \sum_{\tau\in \Thdd} |v|_{H^r(\ttau)}^2.$$
For any element $\tau' \in \Thdd$, let $v_{\tau'}$ denote $\widetilde v$ restricted to $\tau'$
and  extended by zero
outside.  
As $r\in (0,1/2)$, 
$v_{\tau'} \in H^r(\RRD)$ and satisfies
$$ \|v_{\tau'}\|_{H^r(\RRD)}\le C \|v\|_{H^r(\tau')}.$$
The constant $C$ above only depends on Lipschitz constants associated
with $\tau'$ (see \cite{grisvard,DD12}), which in turn only depend on
the constants appearing in \eqref{a:shape-regular}.  We use the triangle inequality to get
$$|v|_{H^r(\ttau)} \le  \sum_{\tau^\prime\subset \ttau} |v_{\tau'}|_{H^r(\ttau)}$$
and hence a Cauchy-Schwarz inequality implies that
$$|v|_{H^r(\ttau)}^2 \le  N_\tau \sum_{\tau' \subset \ttau}
|v_{\tau'}|^2_{H^r(\ttau)}\le C \sum_{\tau' \subset \ttau}
|v_{\tau'}|^2_{H^r(\tau^\prime)}
$$
with $N_\tau$ denoting the number of elements in $\ttau$.   As the mesh
is quasi-uniform, $N_\tau$ can be bounded independently of $h$.  
In
addition, the mesh quasi-uniformity condition also implies that each $\tau' \in
\Thd$ is contained in a most a fixed number (independent of $h$) of
$\ttau$ (with $\tau\in\Thdd$).  Thus,
$$J_3
\le C\sum_{\tau^\prime\in \Thd} \|v\|^2_{H^r(\tau^\prime)}.
$$
Combining the estimates for $J_2,J_3$ and $J_4$ completes the proof of
the lemma.
\end{proof}

\begin{proof} [Proof of Lemma~\ref{fe-interpolant}]  
In this proof, $C$ denotes a generic constant independent of $h$ and $j$ defined later.
The inequality  (4.1) of \cite{SZ90} guarantees that for $\tau \in \cT_h$, we have
\begin{equation}
\|v-\pi^{sz}_hv\|_{H^{m}(\tau)} \le C \sum_{k=0}^m
  h^{k-m} \|v-p\|_{H^k(S_\tau)} ,\qquad \hbox{ for }m=0,1,
\label{sz-bound}
\end{equation}
for any linear polynomial $p$ and $v\in H^{1}(S_\tau)$.    Here
$S_\tau$ denotes the union of $\tau^\prime\in \cT_h$ with
$\tau\cap\tau^\prime \neq \emptyset$.  

Now, we map $\tau$ to the reference element using an affine
transformation.   The mapping takes $S_\tau$ to $\widehat S_\tau$.
Our aim is to take advantage of the averaged Taylor polynomial constructed in \cite{dupont-scott}, which requires the domain to be star-shaped with respect to a ball (of uniform diameter). The patch $\widehat S_\tau$ may not satisfy this property.
Howecer, it can be written as the (overlapping) union  of domains $\widehat D_j$ with
each $ \widehat D_j$ consisting of the union of pairs of elements of $\widehat S_\tau$ sharing a common
face.  These $\widehat D_j$ are star-shaped with respect to balls of diameter depending on the shape regularity constant of the subdivision, which is uniform thanks to \eqref{a:shape-regular}.
Hence,  the averaged Taylor
polynomial $\cQ_j$ constructed in \cite{dupont-scott} satisfies (see
Theorem~6.1 of \cite{dupont-scott}),  for all
$v\in H^{\beta}(\widehat S_j)$,
\begin{equation}
\|v-\cQ_jv\|_{H^1(\widehat D_j)} 
\le C  |v|_{H^{\beta}(\widehat D_j)}.
\label{dupscot}
\end{equation}
Taking $\| \cdot \|_{\widehat D_j}$ to be $\|\cdot\|_{L^2(\widehat D_j)} $ or
$\|\cdot\|_{H^1(\widehat D_j)}$ and $|\cdot |_{\widehat D_j} = |\cdot|_{H^\beta(\widehat D_j)}$ in
Theorem~7.1  of \cite{dupont-scott}  implies that \eqref{dupscot} holds
with $\widehat D_j$ replaced by $\widehat S_j$.  
This, \eqref{sz-bound} and a
Bramble-Hilbert  argument
implies that for $v\in H^\beta(D)\cap H^1_0(D)$,
 \begin{equation}  \label{frac-est}
\|v-\pi_h^{sz} v\|_{L^2(D)} +  h \|v-\pi_h^{sz}v\|_{H^1(D)} 
\le C h^\beta |v|_{H^{\beta}(D)}.
\end{equation}
Inequality \eqref{e:approx_sc} follows from \eqref{frac-est} and
interpolation.

We cannot use Theorem~7.1 of \cite{dupont-scott} to derive 
\eqref{e:stability_sc} because of the non-locality of the norm
$|\cdot|_{H^\beta(D)}$.    Instead,
we apply Lemma~\ref{rbound},
\eqref{frac-est}, and the fact that $|\pi_h^{sz}v|_{H^\beta(\tau)}=0$ to
obtain, for $v\in H^\beta(D)\cap H^1_0(D)$,
\begin{equation}
\begin{aligned} |v-\pi_h^{sz}v |_{H^\beta (D)}^2&\le C \bigg (h^{2-2\beta} \|\nabla (v-\pi_h^{sz}v)\|_{L^2(D)}^2 +
\sum_{\tau\in \Thd} |v|_{H^r(\tau)}^2\bigg )\\
&\le |v|^2_{H^\beta(D)}.\end{aligned}
\label{seminb}
\end{equation}
The norms in  \eqref{e:stability_sc} can be replaced
by $\|\cdot \|_{H^\beta(D)}$ and hence \eqref{e:stability_sc} follows
from \eqref{frac-est} and \eqref{seminb}.
\end{proof}

\end{document}